\newtheorem{theorem}{Theorem}[section]
\newtheorem{lemma}[theorem]{Lemma}
\newtheorem{corollary}[theorem]{Corollary}
\theoremstyle{definition}
\newtheorem{definition}[theorem]{Definition}
\newtheorem{remark}[theorem]{Remark}
\newcommand{\what}{\widehat}
\newcommand{\R}{\mathbb R}%
\newcommand{\C}{\mathbb C}%
\newcommand{\Z}{\mathbb Z}%
\newcommand{\N}{\mathbb N}%
\newcommand{\z}{\mathfrak z}%
\newcommand{\mv}{\mathfrak v}%
\newcommand{\J}{\mathscr J}%
\numberwithin{equation}{section}
\renewcommand\subsubsection{\@secnumfont}{\bfseries}%
\renewcommand\subsubsection{\@startsection{subsubsection}{3}
  \z@{.5\linespacing\@plus.7\linespacing}{-.5em}%
  {\normalfont\bfseries}}
\begin{document}

\title[Maximal estimates and pointwise convergence]{Maximal estimates and pointwise convergence for solutions of certain dispersive equations with radial initial data on Damek-Ricci spaces}

\author[Utsav Dewan]{Utsav Dewan}
\address{Stat-Math Unit, Indian Statistical Institute, 203 B. T. Rd., Kolkata 700108, India}
\email{utsav\_r@isical.ac.in}

\subjclass[2020]{Primary 35J10, 43A85; Secondary 22E30, 43A90}

\keywords{Pointwise convergence, Dispersive equation, Damek-Ricci spaces, Radial functions.}

\begin{abstract}
One of the most celebrated problems in Euclidean Harmonic analysis is the Carleson's problem: determining the optimal regularity of the initial condition $f$ of the
Schr\"odinger equation given by
\begin{equation*}
\begin{cases}
	 i\frac{\partial u}{\partial t} -\Delta_{\mathbb{R}^n} u=0\:,\:\:\:  (x,t) \in \mathbb{R}^n \times \mathbb{R}\:, \\
	u(0,\cdot)=f\:, \text{ on } \mathbb{R}^n \:,
	\end{cases}
\end{equation*}
in terms of the index $\beta$ such that $f$ belongs to the inhomogeneous Sobolev space $H^\beta(\mathbb{R}^n)$ , so that the solution of the Schr\"odinger operator $u$ converges pointwise to $f$, $\displaystyle\lim_{t \to 0+} u(x,t)=f(x)$, almost everywhere. In this article, we address the Carleson's problem for the fractional Schr\"odinger equation, the Boussinesq equation and the Beam equation corresponding to both the Laplace-Beltrami operator $\Delta$ and the shifted Laplace-Beltrami operator $\tilde{\Delta}$, with radial initial data on Damek-Ricci spaces, by obtaining a complete description of the local (in space) mapping properties for the corresponding local (in time) maximal functions. Consequently, we obtain the sharp bound up to the endpoint $\beta \ge 1/4$, for (almost everywhere) pointwise convergence. We also establish an abstract transference principle for dispersive equations whose corresponding multipliers have comparable oscillation and also apply it in the proof of our main result.
\end{abstract}

\maketitle
\tableofcontents

\section{Introduction}
One of the most celebrated problems in Euclidean Harmonic analysis is the Carleson's problem: determining the optimal regularity of the initial condition $f$ of the
Schr\"odinger equation given by
\begin{equation*}
\begin{cases}
	 i\frac{\partial u}{\partial t} -\Delta_{\R^n} u=0\:,\:\:\:  (x,t) \in \mathbb{R}^n \times \mathbb{R} \\
	u(0,\cdot)=f\:, \text{ on } \mathbb{R}^n \:,
	\end{cases}
\end{equation*}
in terms of the index $\beta$ such that $f$ belongs to the inhomogeneous Sobolev space $H^\beta(\mathbb{R}^n)$, so that the solution of the Schr\"odinger operator $u$ converges pointwise to $f$, 
\begin{equation*}
\displaystyle\lim_{t \to 0+} u(x,t)=f(x)\:,\:\:\text{ almost everywhere }.
\end{equation*}
Such a problem was first studied by Carleson \cite{C} and Dahlberg-Kenig \cite{DK} in dimension $1$, followed by several other experts in the field for arbitrary dimension (see \cite{Cowling, Sjolin, Vega, Bourgain, DGL, DZ} and references therein).

\medskip

Recently, the investigation on Carleson's problem for the Schr\"odinger equation in the non-Euclidean setting of rank one Riemannian symmetric spaces of noncompact type or (more generally) Damek-Ricci spaces has received considerable interest \cite{WZ,Dewan,DR,Kumar,Dewan2}. Damek-Ricci spaces are also known as Harmonic $NA$ groups. These spaces $S$ are non-unimodular, solvable extensions of Heisenberg type groups $N$, obtained by letting $A=\R^+$ act on $N$ by homogeneous dilations. The rank one Riemannian Symmetric spaces of noncompact type are the prototypical examples of (and in fact accounts for a very small subclass of the more general class of) Damek-Ricci spaces \cite[p. 643]{ADY}. For unexplained notations and terminologies, the reader is referred to section $2$.

\medskip

Let $\Delta$ be the Laplace-Beltrami operator
on $S$ corresponding to the left-invariant Riemannian metric. Its $L^2$-spectrum is the half line $(-\infty,  -Q^2/4]$, where $Q$ is the homogeneous dimension of $N$. The Schr\"odinger equation on $S$ is given by
\begin{equation} \label{schrodinger}
\begin{cases}
	 i\frac{\partial u}{\partial t} -\Delta u=0\:,\:\:\:  (x,t) \in S \times \R \:,\\
	u(0,\cdot)=f\:,\: \text{ on } S \:.
	\end{cases}
\end{equation}
To quantify the Carleson's problem on $S$, for $\beta \ge 0$, we recall the fractional $L^2$-Sobolev spaces on $S$ for the special case of radial functions \cite{APV}:
\begin{equation} \label{sobolev_space_defn}
H^\beta(S):=\left\{f \in L^2(S): {\|f\|}_{H^\beta(S)}:= {\left(\int_0^\infty {\left(\lambda^2 + \frac{Q^2}{4}\right)}^\beta {|\widehat{f}(\lambda)|}^2 {|{\bf c}(\lambda)|}^{-2} d\lambda\right)}^{1/2}< \infty\right\}.
\end{equation}
One can also talk about the fractional $L^2$-Sobolev spaces for non-radial functions, by asking suitable fractional powers of (distributional) $\Delta$ of $f$ to be in $L^2$. We note that in the case $\beta=0$, $H^0(S)$ is simply $L^2(S)$. 

\medskip

In \cite{WZ}, Wang and Zhang showed that on Real Hyperbolic spaces, $\beta > 1/2$ is a sufficient condition for the (almost everywhere) pointwise convergence of the solution of the Schr\"odinger equation (\ref{schrodinger}). In \cite{Dewan}, the author improved the above regularity condition from $\beta > 1/2$ to $\beta \ge 1/4$, while specializing to radial initial data, on the vastly general setting of Damek-Ricci spaces. Along with Ray, the author then established the sharpness of the endpoint $\beta=1/4$ in \cite{DR}. Now unlike in $\R^n$, the spectral gap of the Laplace-Beltrami operator on non-flat manifolds such as Damek-Ricci spaces, creates difficulty in extending the arguments employed for the Schr\"odinger equation \cite{Dewan, DR} to the fractional setting (for more details see \cite[point (1), subsection 6.1]{Dewan}). In this regard, Kumar and Sajjan then considered the Carleson's problem for the Fractional Schr\"odinger equation with $a>1$, on rank one Riemannian symmetric spaces of non-compact type $G/K$, with radial initial data:  
\begin{equation} \label{frac_schrodinger}
\begin{cases}
	 i\frac{\partial u}{\partial t} +{(-\Delta )}^{a/2}u=0\:,\:\:\:  (x,t) \in G/K \times \R \\
	u(0,\cdot)=f\:,\: \text{ on } G/K \:,
	\end{cases}
\end{equation}
and proved that $\beta > 1/2$ is a sufficient condition \cite{Kumar}. 

\medskip

This gives rise to a natural question on whether $\beta > 1/2$ is also necessary, in order to guarantee the pointwise convergence of the solution for the Fractional Schr\"odinger equation with radial initial data, on rank one Riemannian symmetric spaces of noncompact type or (more generally) Damek-Ricci spaces. In this article, we answer the above question in the negative and improve their bound $\beta > 1/2$ down to the sharp bound $\beta \ge 1/4$. In fact, we do much more: we obtain a complete description of the local (in space) mapping properties for the local (in time) maximal function corresponding to more general dispersive equations with radial initial data on Damek-Ricci spaces.

\medskip

The Fractional Schr\"odinger equation is a special example of dispersive equations of the form,
\begin{equation} \label{dispersive}
\begin{cases}
	 i\frac{\partial u}{\partial t} +\Psi(\sqrt{-\Delta} )u=0\:,\:  (x,t) \in S \times \R \\
	u(0,\cdot)=f\:,\: \text{ on } S \:,
	\end{cases}
\end{equation}
where $\Psi: (0,\infty) \to \R$ satisfies suitable conditions. We note that for $\Psi(r):= r^a$, with $a>1$, we recover the Fractional Schr\"odinger equation (\ref{frac_schrodinger}). Some other examples are given by $\Psi(r):= r \sqrt{1+r^2}$ and $\Psi(r):= \sqrt{1+r^4}$ corresponding to the Boussinesq equation and the Beam equation (for more details see \cite{FX, GPW}), which often appear in Mathematical Physics. Then for a radial function $f$ belonging to the $L^2$-Schwartz class $\mathscr{S}^2(S)_o$ (for the definition see (\ref{schwartz_defn})), the solution of (\ref{dispersive}) is given by,
\begin{equation} \label{dispersive_soln}
S_{\psi,t} f(x):= \int_{0}^\infty \varphi_\lambda(x)\:e^{it\psi(\lambda)}\:\widehat{f}(\lambda)\: {|{\bf c}(\lambda)|}^{-2}\: d\lambda\:,
\end{equation}
where $\varphi_\lambda$ are the spherical functions, $\psi(\lambda):=\Psi\left(\sqrt{\lambda^2 + \frac{Q^2}{4}}\right)$, is the phase function of the corresponding multiplier, $\widehat{f}$ is the Spherical Fourier transform of $f$ and ${\bf c}(\cdot)$ denotes the Harish-Chandra's ${\bf c}$-function. By replacing $\Delta$ with the shifted Laplace-Beltrami operator $\tilde{\Delta}:=\Delta + \frac{Q^2}{4}$, in (\ref{dispersive}), we would also consider the shifted variants of the Fractional Schr\"odinger equation, the Boussinesq equation and the Beam equation. We note that in this case, due to the absence of the spectral gap, one simply has $\psi(\lambda)=\Psi(\lambda)$, reminiscent of the classical Euclidean situation.   

\medskip

Let $B_R$ denote the geodesic ball on $S$, centered at the identity $e$ and radius $R>0$. As in the literature, to address the problem of pointwise convergence, the key is to consider the corresponding maximal function,
\begin{equation} \label{maximal_fn}
S^*_{\psi} f(x):= \displaystyle \sup_{0<t<1} \left|S_{\psi, t} f(x)\right|\:,
\end{equation}
and then obtain its $L^p_{loc}$ boundedness on balls, for some $p \in [1,\infty]$,
\begin{equation} \label{estimates_on_balls}
{\|S^*_{\psi} f\|}_{L^p(B_R)} \lesssim {\|f\|}_{H^\beta(S)}\:,
\end{equation}
for all $R>0$. As mentioned before, the main result of this article is the following complete description of the pairs $(p, \beta) \in [1, \infty] \times [0,\infty)$, for which one has the maximal estimates (\ref{estimates_on_balls}) for any $R>0$, and every radial $L^2$-Schwartz class function $f$ on $S$:
\begin{theorem} \label{theorem}
For the maximal functions corresponding to the Fractional Schr\"odinger equation (with $a>1$), the Boussinesq equation and the Beam equation with respect to both $\Delta$ and $\tilde{\Delta}$, we have:
\begin{itemize}
\item[(i)] If $\beta < 1/4$, then (\ref{estimates_on_balls}) holds for no $p$.
\item[(ii)] If $1/4 \le \beta < n/2$, then (\ref{estimates_on_balls}) holds if and only if  $p \le 2n/(n-2\beta)$.
\item[(iii)] If $\beta = n/2$, then (\ref{estimates_on_balls}) holds if and only if $p< \infty$.
\item[(iv)] If $\beta > n/2$, then (\ref{estimates_on_balls}) holds for all $p$.
\end{itemize}
\end{theorem}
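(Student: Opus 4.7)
The plan is to combine an abstract transference principle with the sharp estimates already known for the Schr\"odinger equation on Damek-Ricci spaces. The central mechanism is to split the spectral variable $\lambda \in (0,\infty)$ dyadically, isolating a compact low-frequency region $\lambda \lesssim 1$ from the high-frequency regime $\lambda \gtrsim 1$. Over the compact region, the spherical functions $\varphi_\lambda(x)$, the Plancherel density ${|\mathbf{c}(\lambda)|}^{-2}$, and any of the phase functions $\psi(\lambda)$ are smooth and nondegenerate, so the corresponding piece of $S_{\psi,t} f$ reduces to a harmless lower-order term absorbed into ${\|f\|}_{L^2(S)} \le {\|f\|}_{H^\beta(S)}$.

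All the oscillation that governs the problem thus lives in the high-frequency piece, and this is where the transference principle should apply. Roughly, it ought to say: if two phases $\psi_1,\psi_2$ have comparable first derivatives at high frequency (after a natural change of variable), then the corresponding maximal operators satisfy the same local $L^p(B_R)$ mapping properties. For the equations at hand, one checks that each high-frequency phase reduces to the Schr\"odinger phase $\lambda^2 + Q^2/4$ after a suitable substitution: for the Fractional Schr\"odinger equation with exponent $a>1$, one sets $\mu = \lambda^{a-1}$; for the Boussinesq and Beam phases the asymptotic $\psi(\lambda) \sim \lambda^2$ is direct; the shifted variants $\tilde{\Delta}$ are handled identically with $\lambda$ in place of $\sqrt{\lambda^2+Q^2/4}$. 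Feeding the sharp Schr\"odinger endpoint estimate from \cite{Dewan,DR} (which yields boundedness $H^{1/4}(S) \to L^2(B_R)$ and failure below $\beta=1/4$) through the transference principle gives part (i) and the $p=2$ case of part (ii).

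To reach the full range in parts (ii)--(iv), I would interpolate this $p=2$ endpoint against a trivial $p=\infty$ bound obtained for $\beta > n/2$ by Cauchy--Schwarz in (\ref{dispersive_soln}), using $|\varphi_\lambda(x)| \le 1$ together with the convergence of $\int_0^\infty (\lambda^2 + Q^2/4)^{-\beta}\, {|\mathbf{c}(\lambda)|}^{-2}\, d\lambda$. The critical exponent $p = 2n/(n-2\beta)$ in part (ii) is exactly the one dictated by a local Sobolev embedding $H^\beta(S)_{\mathrm{rad}} \hookrightarrow L^p(B_R)$ which on the radial profile matches the Euclidean Sobolev line. Necessity of $p \le 2n/(n-2\beta)$ is established by choosing radial bumps that saturate this embedding, and the failure of $p=\infty$ in part (iii) is witnessed by a logarithmic test function.

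The main obstacle is formulating the transference principle at the right level of generality. In the Damek-Ricci setting the Harish-Chandra $e$-expansion of $\varphi_\lambda$ and the density ${|\mathbf{c}(\lambda)|}^{-2}$ interact with the phase in the stationary-phase analysis of the kernel $\int e^{it\psi(\lambda)}\, \varphi_\lambda(x)\, {|\mathbf{c}(\lambda)|}^{-2}\, d\lambda$, so ``comparable oscillation'' has to be quantified so that these additional factors do not spoil the reduction to the Schr\"odinger model. Once that formulation is in place, verification for the three specific phases becomes a short derivative check; avoiding logarithmic losses at the critical threshold $\beta = n/2$ in part (iii) is the other delicate point and will require a careful Littlewood--Paley decomposition.
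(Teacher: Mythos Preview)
Your proposal has two genuine gaps.

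\textbf{Transference does not reduce the Fractional Schr\"odinger equation to the Schr\"odinger equation.} The transference principle in the paper (Theorem~\ref{transference_principle}) applies only to phases that are \emph{of comparable oscillation}, meaning $|\psi_1(\lambda)-\psi_2(\lambda)|\le C$ for large $\lambda$. For the Fractional Schr\"odinger phase $\psi(\lambda)=\lambda^a$ (or its unshifted version) with $a\ne 2$, the difference $\lambda^a-\lambda^2$ is unbounded, so the principle is inapplicable; indeed Remark~\ref{examples_remark}(i) explicitly notes this fails for $a>2$. Your suggested fix, a substitution $\mu=\lambda^{a-1}$, is not the paper's transference at all: such a change of variable would simultaneously alter the spherical function $\varphi_\lambda(s)$ and the Plancherel density ${|{\bf c}(\lambda)|}^{-2}$ in the integral (\ref{dispersive_soln}), and there is no mechanism in your outline for controlling the resulting expressions. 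The paper instead proves the sufficient conditions \emph{directly} for every phase in Table~\ref{table:1}, using only the derivative bounds $|\psi'(\lambda)|\lesssim\lambda^{\delta_2-1}$, $|\psi''(\lambda)|\asymp\lambda^{\delta_2-2}$ for $\lambda\ge 1$ (see (\ref{phase_fn_properties})). Transference is invoked only at the very end of the necessity argument for $\beta<1/4$, and only for the Boussinesq and Beam equations, which \emph{are} of comparable oscillation with the $a=2$ Schr\"odinger equation.

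\textbf{Your interpolation misses the sharp exponent.} Starting from $H^{1/4}(S)\to L^2(B_R)$ and $H^{\beta}(S)\to L^\infty(B_R)$ for $\beta>n/2$, Stein interpolation yields $H^\beta(S)\to L^p(B_R)$ only for $p\le (2n-1)/(n-2\beta)$, which is strictly smaller than the sharp value $2n/(n-2\beta)$ in part~(ii). The paper obtains the correct endpoint by a different route: on the small ball $\overline{B_{R_0}}$ it expands $\varphi_\lambda$ in the Bessel series (Lemma~\ref{bessel_series_expansion}), establishes a pointwise comparison (\ref{linearized_max_fn_comparison}) with the Euclidean radial maximal function, and then imports the full mapping range from the Euclidean result of Ding--Niu \cite{DN}; on the annulus $A(R_0,R)$ it runs a separate $TT^*$ and oscillatory-integral argument (Lemma~\ref{large_freq_lemma_annulus_pieces}) which gives a better range there, so the ball estimate is the bottleneck. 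No interpolation between widely separated Sobolev indices is used.
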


Then by standard arguments in the literature (for instance see the proof of Theorem 5 of \cite{Sjolin}), one gets the desired result on pointwise convergence:
\begin{corollary} \label{cor}
For the Fractional Schr\"odinger equation (with $a > 1$), the Boussinesq equation and the Beam equation corresponding to both $\Delta$ and $\tilde{\Delta}$, the pointwise convergence
\begin{equation*}
\displaystyle\lim_{t \to 0+} S_{\psi,t} f(x)=f(x)\:,
\end{equation*}
holds for almost every $x$ in $S$ with respect to the left Haar measure on $S$, whenever the radial initial data $f \in H^\beta(S)$ with $\beta \ge 1/4$.
\end{corollary}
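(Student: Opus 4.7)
The plan is to deduce Corollary \ref{cor} from Theorem \ref{theorem} by the standard density-plus-maximal-function scheme used for the Euclidean Carleson problem; the proof of Theorem 5 in \cite{Sjolin} provides the template.

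First I would verify the pointwise limit on the dense subclass of radial functions in $\mathscr{S}^2(S)_o$. For such a $g$, the representation (\ref{dispersive_soln}) gives
\begin{equation*}
S_{\psi,t}g(x) - g(x) = \int_0^\infty \varphi_\lambda(x)\bigl(e^{it\psi(\lambda)}-1\bigr)\widehat{g}(\lambda)|\mathbf{c}(\lambda)|^{-2}\,d\lambda,
\end{equation*}
whose integrand is dominated uniformly in $t$ by $2|\varphi_\lambda(x)||\widehat{g}(\lambda)||\mathbf{c}(\lambda)|^{-2}$. This majorant is integrable in $\lambda$ for each fixed $x$, because $\widehat{g}$ is rapidly decaying while $\varphi_\lambda(x)$ has at most polynomial growth in $\lambda$, so dominated convergence yields $\lim_{t\to 0^+}S_{\psi,t}g(x) = g(x)$ at every $x \in S$.

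Next I would fix $\beta \ge 1/4$ and a radial $f \in H^\beta(S)$. The characterization (\ref{sobolev_space_defn}) identifies the radial part of $H^\beta(S)$ with a weighted $L^2$-space in the variable $\lambda$; approximating $\widehat{f}$ by smooth compactly supported functions of $\lambda$ and inverting the spherical Fourier transform therefore produces radial $g_k \in \mathscr{S}^2(S)_o$ with $\|f-g_k\|_{H^\beta(S)} \to 0$. Setting $\Lambda f(x) := \limsup_{t\to 0^+}|S_{\psi,t}f(x)-f(x)|$, the triangle inequality combined with the first step gives
\begin{equation*}
\Lambda f(x) \le S^*_{\psi}(f-g_k)(x) + |f(x)-g_k(x)|.
\end{equation*}
For any $R,\sigma>0$, Chebyshev's inequality at exponent $2$ together with the maximal estimate (\ref{estimates_on_balls}) of Theorem \ref{theorem} at $p = 2$ (which is admissible whenever $\beta \ge 1/4$, since $2 \le 4n/(2n-1)$ in case (ii) and trivially in (iii) and (iv)) yield
\begin{equation*}
\bigl|\{x \in B_R : \Lambda f(x) > \sigma\}\bigr| \le C_R\,\sigma^{-2}\,\|f - g_k\|_{H^\beta(S)}^2 \xrightarrow{k\to\infty} 0.
\end{equation*}
Taking a countable union over $\sigma\downarrow 0$ and $R\uparrow\infty$ would then give $\Lambda f = 0$ almost everywhere on $S$, which is the corollary.

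The only auxiliary verification outside the two main steps is the density of radial $\mathscr{S}^2(S)_o$ in the radial part of $H^\beta(S)$, and this is immediate from the weighted $L^2$ description (\ref{sobolev_space_defn}). Since the genuine content has already been absorbed into the maximal estimate of Theorem \ref{theorem}, no substantial obstacle remains; the corollary falls out as a direct consequence.
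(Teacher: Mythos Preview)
Your proposal is correct and follows precisely the approach the paper indicates: the paper does not spell out a proof but simply states that the corollary follows from Theorem \ref{theorem} ``by standard arguments in the literature (for instance see the proof of Theorem 5 of \cite{Sjolin})'', and your density-plus-maximal-function argument is exactly that standard scheme, even citing the same reference.
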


\begin{remark}
\begin{enumerate}
\item[(i)] In the special case of rank one Riemannian symmetric spaces of non-compact type, Kumar and Sajjan obtained $\beta> 1/2$ to be a sufficient condition for the pointwise convergence to hold true in the context of the Fractional Schr\"odinger equation ($a>1$) with radial initial data \cite[Theorem A]{Kumar}. Our corollary \ref{cor} improves the regularity threshold down to $\beta \ge 1/4$. Theorem \ref{theorem} also establishes the sharpness of the endpoint $\beta=1/4$.
\item[(ii)] Theorem \ref{theorem} generalizes the local mapping properties for the local maximal function corresponding to the Schr\"odinger equation \cite[Theorem 1.1]{DR} to more general class of dispersive equations with radial initial data (see subsection $6.1$).
\end{enumerate}
\end{remark}

The Carleson's problem for the fractional Schr\"odinger equation ($a>1$) with radial initial data on $\R^n$ was first studied by Prestini in \cite{Prestini}, where she proved the sharp upto the endpoint bound $\beta \ge 1/4$. Afterwards, the complete mapping properties of the corresponding maximal function were obtained by Sj\"olin in \cite{Sjolin2}, which were partially generalized to more general dispersive equations by Ding and Niu \cite{DN}. Their arguments hinge heavily on the small and large argument asymptotics of the Bessel functions and the oscillation afforded by them and the corresponding mutiplier. Unlike in the Euclidean case, in the general setting of Damek-Ricci spaces however, no explicit expression of the spherical function is known. In fact, it only admits certain series expansions (the Bessel series expansion and a series expansion similar to the classical Harish-Chandra series expansion) depending on the geodesic distance from the identity of the group.  

\medskip

We commence the proof of the sufficient conditions in Theorem \ref{theorem} by introducing two cut-offs near $0$ and at $\infty$ and then decomposing the linearized maximal function into two pieces corresponding to small and large frequency. The former is easy to handle. Estimating the latter one however is more involved, as it is the high frequency part that essentially dictates maximal estimates. In order to do so, we invoke the various series expansions of $\varphi_\lambda$. This prompts us to decompose the ball $B_R$ into a closed ball $\overline{B_{R_0}}$ and an open annulus $A(R_0,R)$,
\begin{equation*}
B_R = \overline{B_{R_0}} \sqcup A(R_0,R)\:.
\end{equation*}
On the ball $\overline{B_{R_0}}$, we expand the spherical function into the Bessel series. Then by building up on ideas employed by the author and Ray in \cite{DR}, we estimate the linearized maximal function on $\overline{B_{R_0}}$ by forming a connection with $\R^n$ and lifting Euclidean results to $S$. 

\noindent On the annulus $A(R_0,R)$, the above mentioned Bessel series expansion is no longer valid and thus this is the genuine non-Euclidean situation. We first consider the high regularity scenario, that is $\beta >1/2$. In this case, the pointwise decay of $\varphi_\lambda$ given by (\ref{pointwise_phi_lambda}) suffices to estimate the linearized maximal function. In the lower regularity regime $1/4 \le \beta \le 1/2$ however, it is much more delicate. Invoking a variation of the Harish-Chandra series expansion of $\varphi_\lambda$, recently obtained by Anker et. al. \cite{APV},
we decompose the linearized maximal function into three parts: two of them are oscillatory in nature, while the third one corresponds to certain error terms. To estimate the oscillatory parts, we first employ an abstract $TT^*$ argument. Then we expand the cut-off near $\infty$ into infinitely many small pieces. On each such small window of frequency, we estimate the joint oscillation afforded by the spherical functions and the multipliers in such a way that the estimates turn out to be summable. In this oscillatory integral estimate, we dispense of the explicitness of the phase function $\psi$ and only use large frequency asymptotics of $\psi'$ and $\psi''$. This step is somewhat technical. Summing up the above estimates, we appeal to the one dimensional Pitt's inequality to obtain the desired estimate. The third part corresponding to the error term is taken care of by certain pointwise estimates. Here one crucially uses the extra decay (compared to the classical case of the Harish-Chandra series expansion) of the $\Gamma$ coefficients in $\lambda$, given by (\ref{coefficient_estimate}).

\medskip

For the proof of sharpness of the end-point $\beta=1/4$, we first consider the case of the fractional Schr\"odinger equation corresponding to $\Delta$. In this case, we work in a small annulus and hence naturally expand the spherical function in the Bessel series. The oscillation of the spherical function is realized in terms of the leading Bessel term in the series. Then invoking the oscillatory asymptotic expansion for that Bessel function, we eventually decompose the linearized maximal function into three pieces. Controlling the joint oscillation afforded by the spherical function and the multiplier, we realize that one of the above pieces exhibits divergence while the other two decay. Our computations involve derivative estimates of the Harish-Chandra's ${\bf c}$-function and again, the spectral gap of $\Delta$ makes the computations more difficult (compared to Sj\"olin's arguments in $\R^n$). The fractional Schr\"odinger equation corresponding to $\tilde{\Delta}$ follows exactly the same line of arguments and is perhaps even a bit simpler due to the absence of the spectral gap. The other equations are then taken care of by applying an abstract transference principle (Theorem \ref{transference_principle}), which we explain next. 

\begin{definition} \label{my_defn}
Let us consider two dispersive equations of the form (\ref{dispersive}) corresponding to $\Psi_1,\Psi_2$ (recall that $\psi_1,\psi_2$ are the phase functions of the corresponding multipliers).
\begin{itemize}
\item[(i)] They are called $(\psi_1,\psi_2)$-{\it locally transferrable} if given that for all $R>0$, some $\beta_0>0$ and some $p \in [1,\infty]$, the maximal estimate 
\begin{equation*}
{\|S^*_{\psi_1} f\|}_{L^p(B_R)} \lesssim {\|f\|}_{H^\beta(S)}\:,
\end{equation*}
holds for all $\beta >\beta_0$ and all $f \in \mathscr{S}^2(S)_o$, it follows that the maximal estimate 
\begin{equation*}
{\|S^*_{\psi_2} f\|}_{L^p(B_R)} \lesssim {\|f\|}_{H^\beta(S)}\:,
\end{equation*}
also holds for all $R>0$, all $\beta >\beta_0$ and all $f \in \mathscr{S}^2(S)_o$. 

\item[(ii)] If the two equations are both $(\psi_1,\psi_2)$-locally transferrable as well as $(\psi_2,\psi_1)$-locally transferrable, then they are simply called {\it locally transferrable}.

\item[(iii)] If there exist $\Lambda>0$ and $C>0$, such that the phase functions $\psi_1$ and $\psi_2$ satisfy
\begin{equation*}
\left|\psi_1(\lambda)-\psi_2(\lambda)\right| \le C\:, \text{ for all } \lambda > \Lambda\:,
\end{equation*} 
then the equations are said to be {\it of comparable oscillation}. 
\end{itemize}
\end{definition}

\begin{remark} \label{examples_remark}
Point $(iii)$ of definition \ref{my_defn} means that in high frequency, both the phase functions are within bounded error. Let us now consider the equations mentioned before.
\begin{itemize}
\item[(i)] For $a>1$, the phase function of the multiplier of the Fractional Schr\"odinger equation corresponding to $\Delta$ and the shifted operator $\tilde{\Delta}$ are given by $\psi_1(\lambda)={\left(\lambda^2 + \frac{Q^2}{4}\right)}^{a/2}$ and $\psi_2(\lambda)=\lambda^a$ respectively. Now for $1<a \le 2$ and $\lambda$ large, 
\begin{equation*}
{\left(\lambda^2 + \frac{Q^2}{4}\right)}^{a/2} = \lambda^a + \mathcal{O}(1)\:,
\end{equation*}
and thus for $1<a \le 2$, both the Fractional Schr\"odinger equations are of comparable oscillation. This is clearly not the case when $a>2$.
\item[(ii)] The phase functions of the multiplier of the Boussinesq operator and its shifted counterpart are given respectively as,
\begin{equation*}
\psi_3(\lambda)= \sqrt{\left(\lambda^2 + \frac{Q^2}{4}\right)} \sqrt{1+\left(\lambda^2 + \frac{Q^2}{4}\right)} \:,\text{ and } \psi_4(\lambda)= \lambda \sqrt{1+\lambda^2} \:.
\end{equation*}
For $\lambda$ large,
\begin{eqnarray*}
&&\psi_3(\lambda) = \left(\lambda + \mathcal{O}(\lambda^{-1})\right)\left(\lambda + \mathcal{O}(\lambda^{-1})\right)= \lambda^2 + \mathcal{O}(1)\:, \\
&&\psi_4(\lambda)= \lambda \left(\lambda + \mathcal{O}(\lambda^{-1})\right) = \lambda^2 + \mathcal{O}(1)\:,
\end{eqnarray*}
and thus both are of comparable oscillation to the classical Schr\"odinger equation (that is, $a=2$). The same is true for both the Beam operators and can be verified similarly. 
\end{itemize}
\end{remark}

The notion of {\it comparable oscillation} defines an equivalence relation. Intuitively, all the equations in the same equivalence class should `behave similarly' as essentially, it is the oscillation of the multiplier in the high frequency regime that dictates maximal estimates. Our next result confirms this.

\begin{theorem} \label{transference_principle}
Let $\psi_1$ and $\psi_2$ be continuous real-valued functions on $[0,\infty)$ such that they are $C^\infty$ away from the origin. If the dispersive equations corresponding to $\psi_1$ and $\psi_2$ are of comparable oscillation, then they are also locally transferrable.  
\end{theorem}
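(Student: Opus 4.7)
\textbf{Proof proposal for Theorem \ref{transference_principle}.} By symmetry of the hypothesis it suffices to prove only the $(\psi_1,\psi_2)$-local transferability. The structural fact I would exploit is that for radial $f$ the Sobolev norm (\ref{sobolev_space_defn}) depends only on $|\widehat{f}(\lambda)|$, so any $L^\infty$-multiplier in the $\lambda$-variable acts on radial $H^\beta(S)$ with operator norm bounded by its sup. Combined with the comparable-oscillation hypothesis $|\psi_2 - \psi_1| \le C$ on $(\Lambda, \infty)$, this will let me Taylor-expand $e^{it(\psi_2 - \psi_1)(\lambda)}$ in the high-frequency regime and reduce the $\psi_2$-maximal bound on $f$ to a rapidly convergent sum of $\psi_1$-maximal bounds applied to modified radial Schwartz functions.

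Fix a smooth cutoff $\chi \in C^\infty_c([0, \infty))$ with $\chi \equiv 1$ on $[0, \Lambda]$ and $\operatorname{supp}\chi \subset [0, \Lambda + 1]$, and decompose $f = f_{\mathrm{low}} + f_{\mathrm{high}}$ via $\widehat{f_{\mathrm{low}}} := \chi\,\widehat{f}$, $\widehat{f_{\mathrm{high}}} := (1-\chi)\,\widehat{f}$. For $f_{\mathrm{low}}$, the spectral integral defining $S_{\psi_j, t} f_{\mathrm{low}}$ via (\ref{dispersive_soln}) is supported on the compact interval $[0, \Lambda+1]$, where $|\varphi_\lambda(x)|$ (for $x \in B_R$), $|{\bf c}(\lambda)|^{-2}$, and $(\lambda^2 + Q^2/4)^{-\beta}$ are all bounded. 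A direct Cauchy--Schwarz against (\ref{sobolev_space_defn}) therefore gives
\[
|S_{\psi_j, t} f_{\mathrm{low}}(x)| \lesssim_{R,\Lambda,\beta} \|f\|_{H^\beta(S)}
\]
uniformly in $t \in (0,1)$ and $x \in B_R$ for both $j = 1, 2$, so the low-frequency part causes no difficulty for either phase function.

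For the high-frequency part, set $\gamma := \psi_2 - \psi_1$, so $|\gamma(\lambda)| \le C$ on $\operatorname{supp} \widehat{f_{\mathrm{high}}} \subset [\Lambda, \infty)$. The expansion $e^{it\gamma(\lambda)} = \sum_{k\geq 0} (it\gamma(\lambda))^k/k!$ is absolutely convergent uniformly in $t \in (0,1)$ and in $\lambda$ on this support, dominated by the summable majorant $e^C \|\varphi_\lambda\|_{L^\infty(B_R)} |\widehat{f_{\mathrm{high}}}(\lambda)|\, |{\bf c}(\lambda)|^{-2}$. Interchanging the sum and the spectral integral then yields
\[
S_{\psi_2, t} f_{\mathrm{high}}(x) = \sum_{k=0}^\infty \frac{(it)^k}{k!}\, S_{\psi_1, t}[g_k](x), \qquad \widehat{g_k}(\lambda) := \gamma(\lambda)^k\, \widehat{f_{\mathrm{high}}}(\lambda).
\]
Each $g_k$ is radial; since $\gamma$ is $C^\infty$ on $(0, \infty)$ by hypothesis and $\widehat{f_{\mathrm{high}}}$ vanishes in a neighbourhood of $\lambda = 0$, the product $\widehat{g_k}$ is smooth and rapidly decaying, so $g_k \in \mathscr{S}^2(S)_o$. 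The radial-$H^\beta$ formula gives the crucial key estimate $\|g_k\|_{H^\beta(S)} \le C^k \|f\|_{H^\beta(S)}$.

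Finally, taking absolute values, majorising $|t|^k \le 1$, and pulling $\sup_{0<t<1}$ through the absolutely convergent series yields
\[
S^*_{\psi_2} f_{\mathrm{high}}(x) \le \sum_{k=0}^\infty \frac{1}{k!} S^*_{\psi_1}[g_k](x).
\]
Applying Minkowski's inequality and the hypothesised $\psi_1$-maximal estimate at the same $\beta > \beta_0$ term by term,
\[
\|S^*_{\psi_2} f_{\mathrm{high}}\|_{L^p(B_R)} \le \sum_{k=0}^\infty \frac{1}{k!} \|S^*_{\psi_1} g_k\|_{L^p(B_R)} \lesssim_R \sum_{k=0}^\infty \frac{C^k}{k!} \|f\|_{H^\beta(S)} = e^C \|f\|_{H^\beta(S)},
\]
and combining with the low-frequency bound produces the required $\psi_2$-maximal estimate on $B_R$ for every $\beta > \beta_0$. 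The one genuinely delicate point that I expect will need the most care is the justification of the interchange of the $t$-supremum with the infinite sum, which rests on the uniform-in-$t$ pointwise absolute convergence of the Taylor series; everything else is algebraic bookkeeping driven by the radial-Sobolev invariance under unimodular spectral multipliers.
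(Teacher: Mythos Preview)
Your argument is correct and takes a more direct route than the paper's. Both proofs rest on the same Taylor expansion $e^{it(\psi_2-\psi_1)}=\sum_j (it)^j(\psi_2-\psi_1)^j/j!$ on the high-frequency piece, followed by termwise application of the assumed $\psi_1$-maximal estimate. The difference lies in how the high frequencies are organised: you use a single cutoff at $\Lambda$ and sum the Taylor series globally, obtaining $\sum_k C^k/k!=e^C$ immediately; the paper instead performs a full Littlewood--Paley decomposition $f=f_{k_0}+\sum_{k>k_0}f_k$, Taylor-expands within each dyadic shell, and then invokes Lemma~\ref{sobolev_comparison} (trading $H^{\beta_0+\varepsilon/2}$ for $H^{\beta_0+\varepsilon}$ on a shell supported near $2^k$) to manufacture a factor $2^{-k\varepsilon/2}$ making the outer $k$-sum converge. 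Your approach is cleaner and dispenses with the Sobolev-comparison lemma entirely.

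The one place to be careful is your assertion that $g_k\in\mathscr{S}^2(S)_o$: this requires $\widehat{g_k}=\gamma^k(1-\chi)\widehat f$ to lie in $\mathscr{S}(\R)_e$, which in turn needs all derivatives of $\gamma=\psi_2-\psi_1$ to grow at most polynomially at infinity. The stated hypothesis bounds only $\gamma$ itself, not $\gamma',\gamma'',\ldots$\,. The paper's dyadic pieces $\widehat{f_k}$ have compact support, so its $\widehat{g_{j,k}}=\gamma^j\widehat{f_k}$ are automatically compactly supported smooth functions regardless of how the derivatives of $\gamma$ behave at infinity---that is what the extra machinery actually buys. For every concrete phase appearing in the paper the tempered-growth condition is trivially satisfied, and in the abstract setting your argument is easily salvaged either by extending the $\psi_1$-estimate from $\mathscr{S}^2(S)_o$ to $H^\beta(S)$ by density, or by inserting a dyadic cutoff purely for this technical purpose. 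Incidentally, the step you flag as delicate---pulling $\sup_t$ through the series---is the easy part: it is just $\sup_t\bigl|\sum_k a_k(t)\bigr|\le\sum_k\sup_t|a_k(t)|$.
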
 

This article is organized as follows. In section $2$, we fix our notations, recall certain aspects of Euclidean Fourier Analysis, first principles of oscillatory integral estimates in dimension one and the essential preliminaries about Damek-Ricci spaces and Spherical Fourier Analysis thereon. The sufficient conditions of Theorem \ref{theorem} are proved in section $3$. In section $4$, the transference principle (Theorem \ref{transference_principle}) is proved. The necessary conditions of Theorem \ref{theorem} are proved in section $5$. Finally, we conclude in section $6$, by making some remarks and posing some new problems.
 
\section{Preliminaries}
In this section, we recall some preliminaries and fix our notations.
\subsection{Some notations}
Throughout, the symbols `c' and `C' will denote positive constants whose values may change on each occurrence. The enumerated constants $C_1,C_2, \dots$ will however be fixed throughout. $\N$ will denote the set of positive integers. For $x \in \R$, $\lceil x\rceil$ will denote the smallest integer $m \in \Z$ such that $x \le m$. For non-negative functions $f_1,\:f_2,\:f_3$ we write, 
\begin{itemize}
\item $f_1 \lesssim f_2$ if there exists a positive constant $C $, so that
\begin{equation*}
f_1 \le C f_2 \:\:.
\end{equation*}
\item $f_1 \asymp f_2$ if there exist constants $C,C'>0$, so that
\begin{equation*}
C f_1 \le f_2 \le C' f_1\:.
\end{equation*}
\item $f_1=f_2+\mathcal{O}(f_3)$ if 
\begin{equation*}
|f_1-f_2| \lesssim f_3\:.
\end{equation*}
\end{itemize}
 
\subsection{Fourier Analysis on $\R^n$:}
In this subsection, we recall some Euclidean Fourier Analysis, most of which can be found in \cite{SW}. On $\R$, for ``nice" functions $f$, the Fourier transform $\tilde{f}$ is defined as
\begin{equation*}
\tilde{f}(\xi)= \int_\R f(x)\: e^{-ix\xi}\:dx\:.
\end{equation*}
We next state the Pitt's inequality for one-dimensional Fourier transforms:
\begin{lemma} \cite[p. 489]{Stein} \label{Pitt's_ineq}
One has the inequality
\begin{equation*}
{\left(\int_\R {\left|\tilde{f}(\xi)\right|}^2 \:{|\xi|}^{-2 \beta} d\xi\right)}^{1/2} \lesssim {\left(\int_\R {\left|f(x)\right|}^p\: {|x|}^{\beta_1 p} dx\right)}^{1/p}\:,
\end{equation*}
where $\beta_1 = \beta + \frac{1}{2}- \frac{1}{p}$ and the following two conditions are satisfied:
\begin{equation*}
0 \le \beta_1 < 1 - \frac{1}{p}\:,\: \text{ and } 0 \le \beta < \frac{1}{2}\:.
\end{equation*}
\end{lemma}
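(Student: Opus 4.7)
The plan is to reduce the stated weighted Fourier inequality to a Stein-Weiss weighted Hardy-Littlewood-Sobolev inequality on the line via Plancherel's theorem.

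\textbf{Step 1: Reduction via the Riesz potential.} For $0<\beta<1/2$, the tempered distribution $|\xi|^{-2\beta}$ is, up to an absolute constant, the Fourier transform of the one-dimensional Riesz kernel $|x|^{2\beta-1}$. Applying Plancherel/Parseval to $|\tilde f(\xi)|^{2}=\tilde f(\xi)\overline{\tilde f(\xi)}$ I would obtain
$$
\int_{\R} |\tilde f(\xi)|^{2}\,|\xi|^{-2\beta}\,d\xi \;\asymp\; \iint_{\R\times\R}\frac{f(x)\,\overline{f(y)}}{|x-y|^{1-2\beta}}\,dx\,dy \;\asymp\; \|I_{\beta}f\|_{L^{2}(\R)}^{2},
$$
where $I_{\beta}$ denotes the one-dimensional Riesz potential (Fourier symbol $c|\xi|^{-\beta}$). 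The stated bound therefore becomes $\|I_\beta f\|_{L^{2}}\lesssim \bigl\||x|^{\beta_1} f\bigr\|_{L^{p}}$, and after substituting $g(x)=|x|^{\beta_1}f(x)$ it reads
$$
\bigl\|I_\beta\bigl(|\cdot|^{-\beta_1}g\bigr)\bigr\|_{L^{2}(\R)} \;\lesssim\; \|g\|_{L^{p}(\R)}.
$$

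\textbf{Step 2: Invoking Stein-Weiss.} This is precisely the special case $q=2$, $\gamma=0$, $\delta=\beta_1$ of the classical Stein-Weiss weighted Hardy-Littlewood-Sobolev inequality
$$
\bigl\||x|^{-\gamma}I_\beta h\bigr\|_{L^{q}(\R)} \;\lesssim\; \bigl\||x|^{\delta}h\bigr\|_{L^{p}(\R)},
$$
valid, in dimension one, under the scaling relation $\gamma+\delta=\beta-1/p+1/q$ together with the admissibility conditions $\gamma,\delta\ge 0$, $\gamma<1/q$, $\delta<1/p'$. Inserting our parameters, the scaling relation forces $\beta_1=\beta+1/2-1/p$, and the admissibility conditions translate to exactly the stated constraints $0\le\beta_1<1-1/p$ and $0\le\beta<1/2$. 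Stein-Weiss itself would be established by Stein's analytic interpolation theorem applied to an appropriate holomorphic family of complex-power multiplication operators composed with $I_\beta$, interpolating between the unweighted Hardy-Littlewood-Sobolev inequality and endpoint one-weight Hardy inequalities for the Riesz potential on $\R$.

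\textbf{Main obstacle.} The only substantive ingredient is the Stein-Weiss inequality itself, which is classical; the remainder of the argument is purely verification of admissibility. The hypotheses $\beta<1/2$ and $\beta_1<1-1/p$ are sharp: at $\beta=1/2$ the kernel $|x|^{2\beta-1}$ ceases to be locally integrable, while at $\beta_1=1-1/p$ the weight $|x|^{\beta_1 p}$ leaves the class in which the Fourier transform maps weighted $L^p$ into weighted $L^2$. Since the lemma is cited directly from \cite{Stein}, essentially no new work beyond the reduction above is required.
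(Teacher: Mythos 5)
The paper does not prove this lemma: it is quoted verbatim from Stein's 1956 paper on interpolation of linear operators, where it appears as an application of his analytic (complex) interpolation theorem applied directly to the weighted Fourier transform, interpolating essentially between Hausdorff--Young/Plancherel and a Hardy-type endpoint. Your proposal takes a different, but equally classical, route: you pass through Plancherel and the Riesz potential to recast the weighted Fourier bound as a Stein--Weiss weighted fractional integral inequality, and then cite that. Your Step~1 reduction is correct for $0<\beta<1/2$ (the case $\beta=0$ degenerates to Plancherel with $\beta_1=1/2-1/p$, which under the implicit $1<p\le 2$ forces $p=2$, $\beta_1=0$, i.e.\ plain Plancherel, so nothing is lost), and your parameter bookkeeping in Step~2 checks out: $\gamma=0$, $\delta=\beta_1$, $\alpha=\beta$, $q=2$ reproduces exactly the stated scaling relation and the admissibility window. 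The trade-off between the two routes is that Stein's original argument is self-contained modulo his interpolation theorem, while yours outsources the analytic interpolation to the proof of Stein--Weiss; conceptually they are the same interpolation argument packaged differently.

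One point worth flagging: both Pitt's inequality as stated and the Stein--Weiss inequality you invoke carry the implicit hypothesis $1<p\le q=2$. The lemma as printed does not say $p\le 2$ explicitly, but it must be read that way --- for $p>2$ and $\beta=0$ the asserted inequality $\|f\|_{L^2}\lesssim \||x|^{1/2-1/p}f\|_{L^p}$ fails (test $f(x)=|x|^{-1/p+\varepsilon}\chi_{(0,1)}$ and let $\varepsilon\downarrow 0$). Since the paper only applies the lemma with $p=4/3$, this is immaterial for the paper, but your write-up should state $p\le 2$ when invoking Stein--Weiss, as that condition is part of its hypotheses and is not derivable from the scaling and admissibility constraints alone.
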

A $C^\infty$ function $f$ on $\R$ is called a Schwartz class function if 
\begin{equation*}
\left|{\left(\frac{d}{dx}\right)}^M f(x)\right| \lesssim {(1+|x|)}^{-N} \:, \text{ for any } M, N \in \N \cup \{0\}\:.
\end{equation*}
We denote by $\mathscr{S}(\R)$ the class of all such functions and $\mathscr{S}(\R)_{e}$ will denote the collection of all even Schwartz class functions on $\R$. Similarly, $C^\infty_c(\R)_e$ denotes the collection of all even, compactly supported smooth functions on $\R$. 

\medskip

Let $\beta \in \C$ with $Re(\beta)>0$. The Riesz potential of order $\beta$ is the operator
\begin{equation*}
I_\beta= {(-\Delta_{\R})}^{-\beta/2}\:,
\end{equation*}
which can also be written as,
\begin{equation*}
I_\beta(f)(x)=C \int_\R f(y)\: {|x-y|}^{\beta -1}\:dy\:,
\end{equation*}
for some $C>0$ (depending on $\beta$), whenever $f \in \mathscr{S}(\R)$. For $\beta >0$, the Fourier transform of the Riesz potential of $f \in \mathscr{S}(\R)$ satisfies the following identity:
\begin{equation} \label{riesz_identity}
(I_\beta(f))^\sim(\xi)= C_\beta {|\xi|}^{-\beta}\tilde{f}(\xi)\:.
\end{equation}
For a ``nice" radial function $f$ in $\R^n$, $n \ge 2$, the Euclidean spherical Fourier transform is defined as
\begin{equation*}
\mathscr{F}f(\lambda):= \int_0^\infty f(r) \J_{\frac{n-2}{2}}(\lambda r)\: r^{n-1}\: dr\:,\:\:\:\:\:\lambda\in [0,\infty),
\end{equation*}
where for all $\mu \ge 0\:, \J_\mu$ denotes the modified Bessel function:
\begin{equation*}
\J_\mu(z)= 2^\mu \: \pi^{1/2} \: \Gamma\left(\mu + \frac{1}{2} \right) \frac{J_\mu(z)}{z^\mu}\:.
\end{equation*}
Here $J_\mu$ are the Bessel functions \cite[p. 154]{SW}. The asymptotic oscillation of Bessel functions is well-studied:
\begin{lemma}\cite[lemma 1]{Prestini} \label{bessel_function_expansion}
Let $\mu \in \frac{1}{2}\N$. Then
there exists a positive constant $A_\mu$ such that
\begin{equation*}
J_\mu(s)= \sqrt{\frac{2}{\pi s}} \cos \left( s - \frac{\pi}{2}\mu - \frac{\pi}{4}\right) + \tilde{E}_\mu(s)\:,
\end{equation*}
where
\begin{equation*}
|\tilde{E}_\mu(s)| \lesssim s^{-3/2}\:,\:\text{ for } s \ge A_\mu\:.
\end{equation*}
\end{lemma}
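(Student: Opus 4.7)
The plan is to derive the stated two-term asymptotic by extracting the leading oscillation from the classical large-argument expansion of the Hankel functions $H_\mu^{(1)}$ and $H_\mu^{(2)}$, and then use the identity $J_\mu = \tfrac{1}{2}(H_\mu^{(1)} + H_\mu^{(2)})$. This route keeps uniform control on the error with minimal fuss and works for all $\mu > -1/2$, covering the case $\mu \in \tfrac{1}{2}\mathbb{N}$ of the lemma.

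First, I would start from the Hankel (Barnes-type) integral representation, valid for $\mu > -1/2$ and $s > 0$:
\begin{equation*}
H_\mu^{(1)}(s) = \left(\frac{2}{\pi s}\right)^{1/2} \frac{e^{i(s - \mu\pi/2 - \pi/4)}}{\Gamma(\mu + 1/2)} \int_0^\infty e^{-u}\, u^{\mu - 1/2} \left(1 + \frac{iu}{2s}\right)^{\mu - 1/2} du,
\end{equation*}
together with its complex conjugate analogue for $H_\mu^{(2)}$. Writing $(1 + iu/(2s))^{\mu - 1/2} = 1 + r_\mu(u,s)$ and using the elementary estimate $|(1+w)^{\mu - 1/2} - 1| \le C_\mu |w|(1+|w|)^{|\mu - 1/2|}$, one finds that for $s \ge A_\mu$ with $A_\mu$ chosen sufficiently large relative to $|\mu|$,
\begin{equation*}
\left|\int_0^\infty e^{-u}\, u^{\mu - 1/2}\, r_\mu(u,s)\, du\right| \le C_\mu\, s^{-1},
\end{equation*}
since the weight $e^{-u} u^{\mu - 1/2}$ renders the integral convergent and absorbs the polynomial growth in $u$. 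This yields
\begin{equation*}
H_\mu^{(1)}(s) = \sqrt{\tfrac{2}{\pi s}}\, e^{i(s - \mu\pi/2 - \pi/4)} \bigl(1 + \rho_\mu^+(s)\bigr), \qquad |\rho_\mu^+(s)| \le C_\mu\, s^{-1},
\end{equation*}
and an analogous expansion for $H_\mu^{(2)}$ with remainder $\rho_\mu^-(s) = \overline{\rho_\mu^+(s)}$.

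Second, averaging the two expansions produces
\begin{equation*}
J_\mu(s) = \sqrt{\tfrac{2}{\pi s}} \cos\!\bigl(s - \tfrac{\pi \mu}{2} - \tfrac{\pi}{4}\bigr) + \tilde{E}_\mu(s),
\end{equation*}
where, setting $\theta(s) = s - \tfrac{\pi \mu}{2} - \tfrac{\pi}{4}$,
\begin{equation*}
\tilde{E}_\mu(s) = \sqrt{\tfrac{2}{\pi s}}\,\tfrac{1}{2}\left(e^{i\theta(s)}\rho_\mu^+(s) + e^{-i\theta(s)}\rho_\mu^-(s)\right).
\end{equation*}
Since $|\tilde{E}_\mu(s)| \le C_\mu\, s^{-1/2} \cdot s^{-1} = C_\mu\, s^{-3/2}$ for $s \ge A_\mu$, this is exactly the claim.

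The only mildly delicate step is the uniform control of the remainder integral in the Hankel representation: one must choose $A_\mu$ large enough that $|iu/(2s)|$ stays small on the bulk of the integration region (so the elementary binomial estimate applies) and then handle the thin tail $u \gtrsim s$ using the factor $e^{-u}$, which beats any polynomial growth from $(1+u/s)^{|\mu - 1/2|}$. Since the lemma is invoked for a fixed $\mu \in \tfrac{1}{2}\mathbb{N}$, the $\mu$-dependence of $A_\mu$ and of the implicit constant in $\lesssim$ is irrelevant in the applications.
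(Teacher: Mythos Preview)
Your argument is correct: the Hankel integral representation you quote is the standard one (valid for $\mu > -1/2$, $s>0$), the binomial remainder estimate is elementary, and the exponential weight $e^{-u}$ indeed kills the polynomial tail so that the bound $|\rho_\mu^\pm(s)| \lesssim s^{-1}$ follows uniformly for $s \ge A_\mu$. Averaging $H_\mu^{(1)}$ and $H_\mu^{(2)}$ then gives the claimed expansion with remainder $O(s^{-3/2})$.

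As for comparison with the paper: there is nothing to compare. The paper does not prove this lemma at all; it is simply quoted from \cite{Prestini} (Prestini, Lemma~1) as a known classical fact about Bessel functions. Your Hankel-function derivation is one of the textbook routes to this asymptotic (essentially Watson's approach), so you have supplied a self-contained proof where the paper only gives a citation.
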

\subsection{Preliminaries on oscillatory integral estimates:}
In this subsection, we recall some preliminary results that are useful in estimating oscillatory integrals. The first one is the standard van der Corput's lemma:
\begin{lemma} \cite[p. 309]{BigStein} \label{van_der_corput}
Let $\zeta \in C^\infty_c(\R)$ and a real-valued function $\theta \in C^\infty(\R)$ satisfying 
\begin{equation*}
|\theta''(\xi)| > \lambda >0 \:,\text{ for all } \xi \in Supp(\zeta)\:.
\end{equation*}
Then we have
\begin{equation*}
\left|\int_{\R} e^{i\theta(\xi)}\:\zeta(\xi)\:d\xi\right| \lesssim \lambda^{-1/2}\: \left\{\|\zeta\|_\infty \:+\: \|\zeta'\|_1 \right\}\:.
\end{equation*}
\end{lemma}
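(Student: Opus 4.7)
The plan is to follow the classical proof that balances a direct length estimate near a stationary point of $\theta$ against an integration-by-parts estimate away from it. Since $\theta'' > \lambda$ (or $<-\lambda$) on the support of $\zeta$, the derivative $\theta'$ is strictly monotone there, so it has at most one zero $\xi_0$ in $\mathrm{Supp}(\zeta)$; if it has no zero we simply pick $\xi_0$ to be any boundary point of the support and the first piece below vanishes. Without loss of generality I will assume $\theta'' > \lambda$, the other sign being identical. For a free parameter $\delta>0$ (to be optimized at the end), split
\begin{equation*}
\int_{\R} e^{i\theta(\xi)}\,\zeta(\xi)\,d\xi \;=\; I_1(\delta) + I_2(\delta),
\end{equation*}
where $I_1(\delta)$ is the integral over $\{|\xi-\xi_0|\le \delta\}$ and $I_2(\delta)$ over its complement inside $\mathrm{Supp}(\zeta)$.

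For $I_1(\delta)$, estimate trivially by the length of the interval to obtain $|I_1(\delta)|\le 2\delta\,\|\zeta\|_\infty$. For $I_2(\delta)$, integrate by parts using $e^{i\theta}=\frac{1}{i\theta'}\frac{d}{d\xi}e^{i\theta}$, which gives
\begin{equation*}
I_2(\delta) = \Bigl[\tfrac{\zeta(\xi)}{i\theta'(\xi)}e^{i\theta(\xi)}\Bigr]_{\partial}
-\int \tfrac{\zeta'(\xi)}{i\theta'(\xi)}e^{i\theta(\xi)}\,d\xi
+\int \tfrac{\zeta(\xi)\theta''(\xi)}{i(\theta'(\xi))^2}e^{i\theta(\xi)}\,d\xi.
\end{equation*}
On the complement, the monotonicity and the lower bound on $\theta''$ give $|\theta'(\xi)|\ge\lambda|\xi-\xi_0|\ge \lambda\delta$. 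This immediately bounds the boundary contribution by $2\|\zeta\|_\infty/(\lambda\delta)$ and the $\zeta'$-term by $\|\zeta'\|_1/(\lambda\delta)$.

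The main (and really the only) obstacle is the third term with $\theta''/(\theta')^2$, since we have no pointwise control over $\|\theta''\|_\infty$; only the lower bound is assumed. The key observation is that the sign of $\theta''$ is constant (again by the hypothesis), so $1/\theta'$ is monotone on each component of the complement, and therefore
\begin{equation*}
\int_{\xi>\xi_0+\delta}\frac{\theta''(\xi)}{(\theta'(\xi))^2}\,d\xi \;=\; \frac{1}{\theta'(\xi_0+\delta)}-\lim_{\xi\to\infty}\frac{1}{\theta'(\xi)} \;\le\; \frac{1}{\lambda\delta},
\end{equation*}
with the analogous estimate on $\{\xi<\xi_0-\delta\}$. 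Hence the third term is at most $2\|\zeta\|_\infty/(\lambda\delta)$.

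Combining these bounds,
\begin{equation*}
\left|\int_{\R} e^{i\theta(\xi)}\zeta(\xi)\,d\xi\right| \;\lesssim\; \delta\,\|\zeta\|_\infty + \frac{\|\zeta\|_\infty + \|\zeta'\|_1}{\lambda\delta}.
\end{equation*}
Optimizing by choosing $\delta = \lambda^{-1/2}$ yields the stated estimate $\lambda^{-1/2}\bigl(\|\zeta\|_\infty + \|\zeta'\|_1\bigr)$. Note that this choice is consistent with $\delta$ being small when $\lambda$ is large, which is the regime of interest; for bounded $\lambda$ the estimate is trivial from the compact support of $\zeta$.
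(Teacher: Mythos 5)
The paper does not prove this lemma; it is quoted verbatim from Stein's Beijing Lectures, so there is no in-paper argument to compare against. Your proof is the classical van der Corput argument (split around the unique critical point of $\theta'$, bound the near piece by its length, integrate by parts on the far piece, and optimize the splitting parameter $\delta=\lambda^{-1/2}$), and it is correct — including the key observation that the term $\int \zeta\,\theta''/(\theta')^{2}$ is controlled by $\|\zeta\|_\infty/(\lambda\delta)$ because $\theta''/(\theta')^{2}=-\tfrac{d}{d\xi}(1/\theta')$ has a definite sign, which circumvents the absence of an upper bound on $\theta''$.

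One small point worth flagging: you tacitly assume $\mathrm{Supp}(\zeta)$ is a single interval (so that $\theta''$ has constant sign there, $\theta'$ is monotone with at most one zero $\xi_0$, and the telescoping bound $\int_{\xi_0+\delta}^{\infty}\theta''/(\theta')^{2}\le 1/(\lambda\delta)$ makes sense). If the support has several components, the hypothesis does not force a single sign for $\theta''$ across gaps, and one would instead apply the argument on the interval where the lemma is actually invoked (which, in this paper, is always $[1/2,2]$). Also, the closing remark that the case of bounded $\lambda$ is ``trivial from compact support'' is unnecessary and, as phrased, would introduce a constant depending on $|\mathrm{Supp}(\zeta)|$; fortunately the $\delta=\lambda^{-1/2}$ optimization already covers all $\lambda>0$ uniformly, so that remark can simply be dropped.
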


We will also require the following lemma which is essentially integration by parts:
\begin{lemma} \cite{SjolinOsc} \label{Sjolin_lemma}
Let $I$ denote an open interval in $\R$. Assume $\zeta \in C^\infty_c(I)$ and a real-valued function $\theta \in C^\infty(I)$ with $\theta' \ne 0$. If $l$ is a positive integer, then
\begin{equation*}
\int_I e^{i \theta(\xi)}\: \zeta(\xi)\: d\xi= \int_I e^{i \theta(\xi)}\: \kappa_l(\xi)\: d\xi\:, 
\end{equation*}
where $\kappa_l$ is a linear combination of functions of the form,
\begin{equation*}
\zeta^{(m)} {\left(\theta'\right)}^{-l-r} \displaystyle\prod_{q=1}^r \theta^{(j_q)}\:,\:\text{ for } 0 \le m,r \le l\:,\: 2 \le j_q \le l+1\:.
\end{equation*}

\end{lemma}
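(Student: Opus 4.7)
The plan is to prove the formula by induction on $l \ge 1$, exploiting the elementary identity
\begin{equation*}
e^{i\theta(\xi)} = \frac{1}{i\,\theta'(\xi)}\,\frac{d}{d\xi}\bigl(e^{i\theta(\xi)}\bigr),
\end{equation*}
which is valid since $\theta'$ never vanishes on $I$. For the base case $l=1$, I would insert this identity into the left-hand side and integrate by parts; the boundary terms vanish because $\zeta \in C^\infty_c(I)$. The quotient rule then yields
\begin{equation*}
\int_I e^{i\theta(\xi)}\,\zeta(\xi)\,d\xi = \int_I e^{i\theta(\xi)}\,\bigl(i\,\zeta'(\xi)(\theta'(\xi))^{-1} - i\,\zeta(\xi)(\theta'(\xi))^{-2}\theta''(\xi)\bigr)\,d\xi,
\end{equation*}
and the two summands of $\kappa_1$ match the advertised form with $(m,r)=(1,0)$ (no factors $\theta^{(j_q)}$) and with $(m,r,j_1)=(0,1,2)$, respectively.

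For the inductive step, assume the formula holds for $l-1$. A typical summand in $\kappa_{l-1}$ has the shape
\begin{equation*}
T(\xi) = \zeta^{(m)}(\xi)\,(\theta'(\xi))^{-(l-1)-r}\,\prod_{q=1}^{r}\theta^{(j_q)}(\xi),\qquad 0\le m,r\le l-1,\ \ 2\le j_q\le l.
\end{equation*}
I would re-invoke the identity to write $e^{i\theta}T = (i\theta')^{-1}T\,(e^{i\theta})'$ and integrate by parts once more, so that the contribution of $T$ becomes a constant multiple of $\int_I e^{i\theta(\xi)}\,\frac{d}{d\xi}\bigl((\theta'(\xi))^{-1}T(\xi)\bigr)\,d\xi$. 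Expanding this derivative by the product rule produces three categories of terms: (a) the derivative falls on $\zeta^{(m)}$, raising the $\zeta$-derivative order from $m$ to $m+1$; (b) it falls on the power $(\theta')^{-l-r}$, introducing an extra factor of $\theta''$ and raising $r$ to $r+1$; (c) it falls on one of the factors $\theta^{(j_q)}$, replacing it by $\theta^{(j_q+1)}$. In each case the outcome is a summand of the prescribed shape, and the inductive bounds $m,r \le l-1$, $j_q \le l$ translate directly into the updated bounds $0\le m',r'\le l$ and $2\le j'_q\le l+1$, precisely the index ranges needed for $\kappa_l$.

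The sole genuine obstacle is the combinatorial bookkeeping in the inductive step: one must verify case by case that none of the indices drifts outside the advertised ranges, paying particular attention to the degenerate situation $r=0$, in which the product in $T$ is empty and only contributions (a) and (b) can arise. Once this (entirely routine) verification is carried out the induction closes, yielding the lemma.
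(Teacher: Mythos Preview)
Your proposal is correct. The paper itself does not supply a proof of this lemma; it simply cites Sj\"olin's paper and remarks that the result ``is essentially integration by parts,'' which is precisely the inductive integration-by-parts argument you outline.
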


\subsection{Damek-Ricci spaces and spherical Fourier Analysis thereon:}
In this subsection, we will explain the notations and state relevant results on Damek-Ricci spaces. Most of these results can be found in \cite{ADY, APV, A}.

\medskip

Let $\mathfrak n$ be a two-step real nilpotent Lie algebra equipped with an inner product $\langle, \rangle$. Let $\mathfrak{z}$ be the center of $\mathfrak n$ and $\mathfrak v$ its orthogonal complement. We say that $\mathfrak n$ is an $H$-type algebra if for every $Z\in \mathfrak z$ the map $J_Z: \mathfrak v \to \mathfrak v$ defined by
\begin{equation*}
\langle J_z X, Y \rangle = \langle [X, Y], Z \rangle, \:\:\:\: X, Y \in \mathfrak v
\end{equation*}
satisfies the condition $J_Z^2 = -|Z|^2I_{\mathfrak v}$, $I_{\mathfrak v}$ being the identity operator on $\mathfrak v$. A connected and simply connected Lie group $N$ is called an $H$-type group if its Lie algebra is $H$-type. Since $\mathfrak n$ is nilpotent, the exponential map is a diffeomorphism
and hence we can parametrize the elements in $N = \exp \mathfrak n$ by $(X, Z)$, for $X\in \mathfrak v, Z\in \mathfrak z$. It follows from the Baker-Campbell-Hausdorff formula that the group law in $N$ is given by
\begin{equation*}
\left(X, Z \right) \left(X', Z' \right) = \left(X+X', Z+Z'+ \frac{1}{2} [X, X']\right), \:\:\:\: X, X'\in \mathfrak v; ~ Z, Z'\in \mathfrak z.
\end{equation*}
The group $A = \R^+$ acts on an $H$-type group $N$ by nonisotropic dilation: $(X, Z) \mapsto (\sqrt{a}X, aZ)$. Let $S = NA$ be the semidirect product of $N$ and $A$ under the above action. Thus the multiplication in $S$ is given by
\begin{equation*}
\left(X, Z, a\right)\left(X', Z', a'\right) = \left(X+\sqrt aX', Z+aZ'+ \frac{\sqrt a}{2} [X, X'], aa' \right),
\end{equation*}
for $X, X'\in \mathfrak v; ~ Z, Z'\in \mathfrak z; a, a' \in \R^+$.
Then $S$ is a solvable, connected and simply connected Lie group having Lie algebra $\mathfrak s = \mathfrak v \oplus \mathfrak z \oplus \R$ with Lie bracket
\begin{equation*}
\left[\left(X, Z, l \right), \left(X', Z', l' \right)\right] = \left(\frac{1}{2}lX' - \frac{1}{2} l'X, lZ'-lZ + [X, X'], 0\right).
\end{equation*}

We write $na = (X, Z, a)$ for the element $\exp(X + Z)a, X\in \mathfrak v, Z \in \mathfrak z, a\in A$. We note
that for any $Z \in \mathfrak z$ with $|Z| = 1$, $J_Z^2 = -I_{\mathfrak v}$; that is, $J_Z$ defines a complex structure
on $\mathfrak v$ and hence $\mathfrak v$ is even dimensional. $m_\mv$ and $m_z$ will denote the dimension of $\mv$ and $\z$ respectively. Let $n$ and $Q$ denote dimension and the homogenous dimension of $S$ respectively:
\begin{equation*}
n=m_{\mv}+m_\z+1 \:\: \textit{ and } \:\: Q = \frac{m_\mv}{2} + m_\z.
\end{equation*}

\medskip

The group $S$ is equipped with the left-invariant Riemannian metric induced by
\begin{equation*}
\langle (X,Z,l), (X',Z',l') \rangle = \langle X, X' \rangle + \langle Z, Z' \rangle + ll'
\end{equation*}
on $\mathfrak s$. For $x \in S$, we denote by $s=d(e,x)$, that is, the geodesic distance of $x$ from the identity $e$. Then the left Haar measure $dx$ of the group $S$ may be normalized so that
\begin{equation*}
dx= A(s)\:ds\:d\sigma(\omega)\:,
\end{equation*}
where $A$ is the density function given by,
\begin{equation*} 
A(s)= 2^{m_\mv + m_\z} \:{\left(\sinh (s/2)\right)}^{m_\mv + m_\z}\: {\left(\cosh (s/2)\right)}^{m_\z} \:,
\end{equation*}
and $d\sigma$ is the surface measure of the unit sphere. As we will be dealing with finite balls centered at the identity, the following local growth asymptotics of the density function will be useful:
\begin{equation}\label{density_function}
A(s) \asymp s^{n-1}\:. 
\end{equation}

\medskip

A function $f: S \to \C$ is said to be radial if, for all $x$ in $S$, $f(x)$ depends only on the geodesic distance of $x$ from the identity $e$. If $f$ is radial, then
\begin{equation*}
\int_S f(x)~dx=\int_{0}^\infty f(s)~A(s)~ds\:.
\end{equation*}

\medskip

We now recall the spherical functions on Damek-Ricci spaces. The spherical functions $\varphi_\lambda$ on $S$, for $\lambda \in \C$ are the radial eigenfunctions of the Laplace-Beltrami operator $\Delta$, satisfying the following normalization criterion
\begin{equation*}
\begin{cases}
 & \Delta \varphi_\lambda = - \left(\lambda^2 + \frac{Q^2}{4}\right) \varphi_\lambda  \\
& \varphi_\lambda(e)=1 \:.
\end{cases}
\end{equation*}
For all $\lambda \in \R$ and $x \in S$, the spherical functions satisfy
\begin{equation*}
\varphi_\lambda(x)=\varphi_\lambda(s)= \varphi_{-\lambda}(s)\:.
\end{equation*}
It also satisfies for all $\lambda \in \R$ and all $s \ge 0$:
\begin{equation} \label{phi_lambda_bound}
\left|\varphi_\lambda(s)\right| \le 1\:.
\end{equation}

\medskip

The spherical functions are crucial as they help us define the Spherical Fourier transform of a ``nice" radial function $f$ (on $S$) in the following way:
\begin{equation*}
\widehat{f}(\lambda):= \int_S f(x) \varphi_\lambda(x) dx = \int_0^\infty f(s) \varphi_\lambda(s) A(s) ds\:.
\end{equation*}
The Harish-Chandra ${\bf c}$-function is defined as
\begin{equation*}
{\bf c}(\lambda)= \frac{2^{(Q-2i\lambda)} \Gamma(2i\lambda)}{\Gamma\left(\frac{Q+2i\lambda}{2}\right)} \frac{\Gamma\left(\frac{n}{2}\right)}{\Gamma\left(\frac{m_\mv + 4i\lambda+2}{4}\right)}\:,
\end{equation*}
for all $\lambda \in \R$. We will need the following pointwise estimates (see \cite[Lemma 4.8]{RS}):
\begin{equation} \label{plancherel_measure}
{|{\bf c}(\lambda)|}^{-2} \asymp \:{|\lambda|}^2 {\left(1+|\lambda|\right)}^{n-3}\:.
\end{equation}
For $j \in \N \cup \{0\}$, we also have the
derivative estimates (\cite[Lemma 4.2]{A}):
\begin{equation}\label{c-fn_derivative_estimates}
\left|\frac{d^j}{d \lambda^j}{|{\bf c}(\lambda)|}^{-2}\right| \lesssim_j {(1+|\lambda|)}^{n-1-j} \:, \:\: \lambda \in \R.
\end{equation}

One has the following inversion formula (when valid) for radial functions:
\begin{equation*}
f(x)= C \int_{0}^\infty \widehat{f}(\lambda)\varphi_\lambda(x) {|{\bf c}(\lambda)|}^{-2}
d\lambda\:,
\end{equation*}
where $C$ depends only on $m_\mv$ and $m_\z$. Moreover, the Spherical Fourier transform extends to an isometry from the space of radial $L^2$ functions on $S$ onto $L^2\left((0,\infty),C{|{\bf c}(\lambda)|}^{-2} d\lambda\right)$. 

\medskip

The class of radial $L^2$-Schwartz class functions on $S$, denoted by $\mathscr{S}^2(S)_{o}$, is defined to be the collection of $f \in C^\infty(S)_{o}$ such that 
\begin{equation} \label{schwartz_defn}
\left|{\left(\frac{d}{ds}\right)}^M f(s)\right| \lesssim {(1+s)}^{-N} e^{-\frac{Q}{2}s}\:, \text{ for any } M, N \in \N \cup \{0\}\:,
\end{equation}
(see \cite[p. 652]{ADY}). The spherical Fourier transform defines a topological isomorphism from $\mathscr{S}^2(S)_{o}$ to ${\mathscr{S}(\R)}_{e}$\:. We also recall \cite[p. 14]{DR}
\begin{equation*}
\mathscr{S}(\R)^\infty_{e}:=\{g\in \mathscr{S}(\R)_{e}\mid 0\notin Supp({g})\}.
\end{equation*}
So, $\mathscr{S}(\R)^\infty_{e}$ is the collection of all even Schwartz class functions on $\R$ which are supported outside an interval containing $0$. Consequently, we also look at 
\begin{equation*}
\mathscr{S}(\R^n)^\infty_{o} := \mathscr{F}^{-1}\left(\mathscr{S}(\R)^\infty_{e}\right)\:, \:\:\text{ and } \mathscr{S}^2(S)^\infty_{o} := \wedge^{-1}\left(\mathscr{S}(\R)^\infty_{e}\right)\:. 
\end{equation*}
Then by repeated applications of the Abel transform and the Schwartz multiplier corresponding to the ratio of the weights of the Plancherel measures on $\R^n$ and $S$ (for details see \cite[Eqn. (4.7), pp. 14-15]{DR}), we have the following one-one correspondence:
\begin{lemma} \label{schwartz_correspondence}
For each $f \in \mathscr{S}^2(S)^\infty_{o}$, there exists a unique $g \in \mathscr{S}(\R^n)^\infty_{o}$ (and conversely) such that 
\begin{equation*}
\lambda^{n-1} \mathscr{F}g(\lambda) = {|{\bf c}(\lambda)|}^{-2} \widehat{f}(\lambda)\:,
\end{equation*}
for all $\lambda$ .
\end{lemma}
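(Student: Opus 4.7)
The plan is to exhibit the correspondence as a composition of three bijections: the spherical Fourier transform on $S$, multiplication by an explicit smooth multiplier on the even Schwartz class $\mathscr{S}(\R)^\infty_e$, and the inverse Euclidean spherical Fourier transform on $\R^n$.

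First, since $f \in \mathscr{S}^2(S)^\infty_o$, its spherical Fourier transform $\widehat{f}$ lies in $\mathscr{S}(\R)^\infty_e$, by the Schwartz isomorphism recalled just before the statement. By definition, $\mathscr{F}$ restricts to a bijection $\mathscr{S}(\R^n)^\infty_o \to \mathscr{S}(\R)^\infty_e$. Hence the required $g$ is necessarily forced to be
\begin{equation*}
g \;=\; \mathscr{F}^{-1}\!\bigl(m(\lambda)\,\widehat{f}(\lambda)\bigr), \qquad m(\lambda) := \frac{|{\bf c}(\lambda)|^{-2}}{\lambda^{n-1}}\:,
\end{equation*}
and the lemma reduces to showing that $h \mapsto m \cdot h$ is a bijection of $\mathscr{S}(\R)^\infty_e$ onto itself, with uniqueness automatic from the injectivity of $\mathscr{F}$.

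The main step, and the only one I expect to require genuine work, is this multiplier claim. Fix $h \in \mathscr{S}(\R)^\infty_e$ with $\mathrm{Supp}(h) \subseteq \{|\lambda| \ge \delta\}$ for some $\delta > 0$. The estimate (\ref{c-fn_derivative_estimates}) implies in particular that $|{\bf c}(\lambda)|^{-2}$ is $C^\infty$ on $\R$ (the quadratic vanishing at $0$ recorded in (\ref{plancherel_measure}) absorbs the pole of $\Gamma(2i\lambda)$ present in the definition of ${\bf c}$), while $\lambda^{-(n-1)}$ is $C^\infty$ on $\R \setminus \{0\}$; hence $m$ is smooth on $\mathrm{Supp}(h)$. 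Combining (\ref{c-fn_derivative_estimates}) with the obvious bound $|(d/d\lambda)^{j-k} \lambda^{-(n-1)}| \lesssim_\delta \lambda^{-(n-1)-(j-k)}$ on the support, Leibniz's rule yields
\begin{equation*}
\left|\frac{d^j}{d\lambda^j} m(\lambda)\right| \;\lesssim_{j,\delta}\; (1+|\lambda|)^{-j}, \qquad \lambda \in \mathrm{Supp}(h),
\end{equation*}
since the two factors carry polynomial weights of exactly opposite order. This is more than enough to conclude $m \cdot h \in \mathscr{S}(\R)^\infty_e$. For the reverse direction, a symmetric derivative bound on $1/m(\lambda) = \lambda^{n-1} |{\bf c}(\lambda)|^2$ follows by the quotient rule, using (\ref{c-fn_derivative_estimates}) together with the lower bound $|{\bf c}(\lambda)|^{-2} \gtrsim_\delta 1$ on the support coming from (\ref{plancherel_measure}), so that no division by a small quantity occurs.

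Composing the three bijections produces the required one-to-one correspondence between $\mathscr{S}^2(S)^\infty_o$ and $\mathscr{S}(\R^n)^\infty_o$. The whole argument is essentially a packaging of the Abel-transform/Plancherel-weight factorization cited in \cite[Eqn. (4.7), pp. 14-15]{DR}; the one genuinely technical input is the derivative compatibility between the two Plancherel weights $|{\bf c}(\lambda)|^{-2}$ and $\lambda^{n-1}$ supplied by (\ref{c-fn_derivative_estimates}), and once that is in hand the rest is bookkeeping.
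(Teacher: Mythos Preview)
Your proposal is correct and matches the approach indicated in the paper. The paper does not write out a self-contained proof but refers to \cite[Eqn.~(4.7), pp.~14--15]{DR}, describing the argument as ``repeated applications of the Abel transform and the Schwartz multiplier corresponding to the ratio of the weights of the Plancherel measures on $\R^n$ and $S$''; this is exactly your decomposition into the two Schwartz isomorphisms (which are built on the Abel transform) together with multiplication by $m(\lambda)=|{\bf c}(\lambda)|^{-2}/\lambda^{n-1}$, and your verification that $m$ acts bijectively on $\mathscr{S}(\R)^\infty_e$ via the derivative estimates (\ref{plancherel_measure})--(\ref{c-fn_derivative_estimates}) is precisely the ``Schwartz multiplier'' step.
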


Depending on the distance from the identity of the group, one has different series expansions of the spherical functions. We first see an expansion of $\varphi_\lambda$ in terms of Bessel functions for points near the identity. But before that, let us define the following normalizing constant in terms of the Gamma functions,
\begin{equation*}
c_0 = 2^{m_\z}\: \pi^{-1/2}\: \frac{\Gamma(n/2)}{\Gamma((n-1)/2)}\:,
\end{equation*}

\begin{lemma}\cite[Theorem 3.1]{A} \label{bessel_series_expansion}
There exist $R_0, 2<R_0<2R_1$, such that for any $0 \le s \le R_0$, and any integer $M \ge 0$, and all $\lambda \in \R$, we have
\begin{equation*}
\varphi_\lambda(s)= c_0 {\left(\frac{s^{n-1}}{A(s)}\right)}^{1/2} \displaystyle\sum_{l=0}^M a_l(s)\J_{\frac{n-2}{2}+l}(\lambda s) s^{2l} + E_{M+1}(\lambda,s)\:,
\end{equation*}
where
\begin{equation*}
a_0 \equiv 1\:,\: |a_l(s)| \le C {(4R_1)}^{-l}\:,
\end{equation*}
and the error term has the following behaviour
\begin{equation*}
	\left|E_{M+1}(\lambda,s) \right| \le C_M \begin{cases}
	 s^{2(M+1)}  & \text{ if  }\: |\lambda s| \le 1 \\
	s^{2(M+1)} {|\lambda s|}^{-\left(\frac{n-1}{2} + M +1\right)} &\text{ if  }\: |\lambda s| > 1 \:.
	\end{cases}
\end{equation*}
Moreover, for every $0 \le s <2$, the series
\begin{equation*}
\varphi_\lambda(s)= c_0 {\left(\frac{s^{n-1}}{A(s)}\right)}^{1/2} \displaystyle\sum_{l=0}^\infty a_l(s)\J_{\frac{n-2}{2}+l}(\lambda s) s^{2l}\:,
\end{equation*}
is absolutely convergent.
\end{lemma}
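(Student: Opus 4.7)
The plan is to derive the expansion by reducing the radial spherical equation for $\varphi_\lambda$ to a perturbation of the radial Euclidean Bessel equation, and then solving iteratively with a Bessel-function ansatz. The spherical function $\varphi_\lambda$ is the unique radial $C^\infty$ solution of
$$\varphi_\lambda''(s) + \frac{A'(s)}{A(s)} \varphi_\lambda'(s) + \left(\lambda^2 + \frac{Q^2}{4}\right) \varphi_\lambda(s) = 0\:, \qquad \varphi_\lambda(0) = 1\:.$$
Writing $\varphi_\lambda(s) = c_0\, U(s)\, \phi_\lambda(s)$ with $U(s) := (s^{n-1}/A(s))^{1/2}$, a direct computation yields $2(U'/U) + A'/A = (n-1)/s$, so the first-order coefficient collapses to the Euclidean form and $\phi_\lambda$ satisfies
$$\phi_\lambda''(s) + \frac{n-1}{s} \phi_\lambda'(s) + \lambda^2 \phi_\lambda(s) = W(s) \phi_\lambda(s)\:,$$
where $W$ is built explicitly from $A$. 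Using $A(s) = 2^{m_\mv + m_\z} (\sinh(s/2))^{m_\mv + m_\z} (\cosh(s/2))^{m_\z}$, the ratio $s^{n-1}/A(s)$ is even and analytic in $s^2$ on a complex disk of radius $2R_1$ determined by the nearest zero of $\sinh(s/2)\cosh(s/2)$, whence so is $W$; in particular one can take $R_1$ of order $\pi$, ensuring $R_0 > 2$.

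Next I would insert the ansatz $\phi_\lambda(s) = \sum_{l \ge 0} a_l(s)\, \J_{(n-2)/2+l}(\lambda s)\, s^{2l}$ with $a_0 \equiv 1$. The two Bessel identities that drive the recursion are $u_l(r) := \J_{(n-2)/2+l}(\lambda r)$ satisfies $u_l'' + \frac{n-1+2l}{r} u_l' + \lambda^2 u_l = 0$, and $\J_\mu'(z) = -\frac{z}{2\mu+1} \J_{\mu+1}(z)$. Substituting the ansatz and collecting the coefficient of each $\J_{(n-2)/2+l}(\lambda s)$, the $\lambda^2$-terms cancel via the Bessel equation, and every derivative applied to a Bessel factor is traded for a shift of the index together with two extra powers of $s$. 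This produces a triangular linear recursion expressing each $a_l$ in terms of $a_0,\dots,a_{l-1}$ and their first two derivatives, driven by the Taylor coefficients of $W$. Since $W$ is analytic in $s^2$ on the disk $|s|<2R_1$, a Cauchy majorant argument gives by induction $|a_l(s)| \le C(4R_1)^{-l}$ uniformly on $|s| \le R_0$.

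For the truncation
$$E_{M+1}(\lambda, s) = c_0 \left(\frac{s^{n-1}}{A(s)}\right)^{1/2} \sum_{l \ge M+1} a_l(s)\, \J_{(n-2)/2+l}(\lambda s)\, s^{2l}\:,$$
the two stated bounds follow by splitting in $|\lambda s|$. When $|\lambda s| \le 1$, the uniform bound $|\J_\mu(z)| \lesssim 1$ combined with the geometric decay of $a_l$ yields a convergent geometric series with leading term of size $s^{2(M+1)}$. When $|\lambda s| > 1$, the asymptotic $|\J_\mu(z)| \lesssim |z|^{-\mu - 1/2}$ from Lemma \ref{bessel_function_expansion} combined with the same decay gives the bound $s^{2(M+1)} |\lambda s|^{-(n-1)/2 - M - 1}$. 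Absolute convergence of the full series for $0 \le s < 2$ then follows by letting $M \to \infty$ in the same estimates. The main obstacle is extracting the recursion in a form where the $a_l$ are genuinely independent of $\lambda$ and admit geometric majorization: each appearance of $W\phi_\lambda$ on the right must be absorbed by raising $l$ by one on the left, which uses both the Bessel equation (to kill $\lambda^2$) and the derivative identity (to shift the Bessel index), and the Cauchy majorant must be calibrated so that successive differentiations in $s$ do not destroy the $(4R_1)^{-l}$ decay.
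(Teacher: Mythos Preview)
The paper does not supply a proof of this lemma; it is quoted as Theorem~3.1 of \cite{A}, which in turn adapts the Stanton--Tomas expansion \cite{ST} from rank-one symmetric spaces to Damek--Ricci spaces. Your sketch reproduces exactly that strategy: conjugate by $U(s)=(s^{n-1}/A(s))^{1/2}$ so that the first-order coefficient becomes $(n-1)/s$, identify the resulting potential $W$ as even and analytic on a disk whose radius (governed by the nearest zero of $\cosh(s/2)$) exceeds $2$, and solve the perturbed Bessel equation by the ansatz $\sum_l a_l(s)\J_{(n-2)/2+l}(\lambda s)s^{2l}$ with the two Bessel identities you state driving a triangular recursion for the $a_l$. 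All of this is correct and is the standard route.

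One technical point deserves more care. For the error bound when $|\lambda s|>1$ you invoke $|\J_\mu(z)|\lesssim|z|^{-\mu-1/2}$ ``from Lemma~\ref{bessel_function_expansion}'', but that lemma is stated only for $\mu\in\tfrac12\N$ and carries both an implicit constant and a threshold $A_\mu$ depending on $\mu$. Since you are summing the tail $\sum_{l\ge M+1}$ with $\mu=(n-2)/2+l\to\infty$, you need these constants controlled uniformly in $l$ and then played off against the geometric decay $(4R_1)^{-l}$ of the $a_l$; the clean way is to use the integral representation $\J_\mu(z)=\int_{-1}^1 e^{izt}(1-t^2)^{\mu-1/2}\,dt$ and integrate by parts, tracking the $\mu$-dependence explicitly. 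Alternatively, and this is closer to how \cite{ST,A} actually argue, one does not define $E_{M+1}$ as the tail of the series but as the Duhamel solution of the inhomogeneous Bessel equation with forcing given by the order-$M{+}1$ remainder of $W$, and bounds it directly via the variation-of-parameters kernel; this sidesteps the uniformity issue entirely.
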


For the asymptotic behaviour of the spherical functions away from the identity, we look at the following series expansion \cite[pp. 735-736]{APV}:
\begin{equation} \label{anker_series_expansion}
\varphi_\lambda(s)= 2^{-m_\z/2} {A(s)}^{-1/2} \left\{{\bf c}(\lambda)  \displaystyle\sum_{\mu=0}^\infty \Gamma_\mu(\lambda) e^{(i \lambda-\mu) s} + {\bf c}(-\lambda) \displaystyle\sum_{\mu=0}^\infty \Gamma_\mu(-\lambda) e^{-(i\lambda + \mu) s}\right\}\:.
\end{equation}
The above series converges for $\lambda \in \R \setminus \{0\}$, uniformly on compacts not containing the group identity, where $\Gamma_0 \equiv 1$ and for $\mu \in \N$, one has the recursion formula,
\begin{equation*}
(\mu^2-2i\mu\lambda) \Gamma_\mu(\lambda) = \displaystyle\sum_{j=0}^{\mu -1}\omega_{\mu -j}\Gamma_{j}(\lambda)\:.
\end{equation*}
Then one has the following estimate on the coefficients \cite[Lemma 1]{APV}, for constants $C>0, d \ge 0$:
\begin{equation} \label{coefficient_estimate}
\left|\Gamma_\mu(\lambda)\right| \le C \mu^d {\left(1+|\lambda|\right)}^{-1}\:,
\end{equation}
for all $\lambda \in \R \setminus \{0\}, \mu \in \N$. Consequently, for any positive constants $c_1$ and $c_2$, one has the following pointwise decay of $\varphi_\lambda$ (\cite[Proposition 3.1]{Bhowmik}):
\begin{equation} \label{pointwise_phi_lambda}
|\varphi_\lambda(s)| \lesssim \lambda^{-\frac{n-1}{2}} e^{-\frac{Q}{2}s},\:\:\:\:\:\:\lambda>c_1,\:\:s>c_2.
\end{equation}

\medskip

The relevant preliminaries on series expansions of $\varphi_\lambda$, for the degenerate case of the Real hyperbolic spaces  can be found in \cite{ST, AP}. 

\section{Proof of the sufficient conditions of Theorem \ref{theorem}}
To study the boundedness results for the maximal function (\ref{maximal_fn}), it suffices to consider its linearization,
\begin{equation*}
T_\psi f(s): = S_{\psi,t(s)} f(s):= \int_{0}^\infty \varphi_\lambda(s)\:e^{it(s)\psi(\lambda)}\:\widehat{f}(\lambda)\: {|{\bf c}(\lambda)|}^{-2}\: d\lambda\:,
\end{equation*}
where $t(\cdot): S \to (0,1)$ is a radial measurable function and obtain inequalities of the form,
\begin{equation*} 
{\|T_\psi f\|}_{L^p(B_R)} \lesssim {\|f\|}_{H^\beta(S)}\:,
\end{equation*}
for all $f \in \mathscr{S}^2(S)_o$. 

\medskip

We commence the proof by first decomposing $T_\psi$ in terms of small and large frequency. More precisely, we consider a non-negative, even function $\eta \in C^\infty_c(\R)$ such that $Supp(\eta) \subset \left\{\xi : 1/2 < |\xi| <2\right\}$ and 
\begin{equation*}
\displaystyle\sum_{k=-\infty}^\infty \eta \left(2^{-k} \xi\right)=1\:,\: \text{ for } \xi \ne 0\:.
\end{equation*}
Let us now define,
\begin{equation*}
\eta_1(\xi):= \displaystyle\sum_{k=-\infty}^0 \eta(2^{-k} \xi) \:,\text{   and  } \eta_2(\xi):= \displaystyle\sum_{k=1}^\infty \eta(2^{-k} \xi) \:.
\end{equation*}
We note that both $\eta_1$ and $\eta_2$ are even non-negative smooth functions with $Supp(\eta_1) \subset (-2,2)$, $Supp(\eta_2) \subset \R \setminus (-1,1)$ and $\eta_1+\eta_2 \equiv 1$. Accordingly, we decompose $T_\psi$ as,
\begin{equation}\label{first_decomposition}
T_\psi f(s): = T_{\psi,1} f(s) + T_{\psi,2} f(s)\:,
\end{equation}
where,
\begin{eqnarray*}
T_{\psi,1} f(s):= \int_{0}^2 \varphi_\lambda(s)\:e^{it(s)\psi(\lambda)}\:\widehat{f}(\lambda)\:\eta_1(\lambda)\: {|{\bf c}(\lambda)|}^{-2}\: d\lambda\:, \\
T_{\psi,2} f(s):= \int_{1}^\infty \varphi_\lambda(s)\:e^{it(s)\psi(\lambda)}\:\widehat{f}(\lambda)\:\eta_2(\lambda)\: {|{\bf c}(\lambda)|}^{-2}\: d\lambda\:.
\end{eqnarray*}
Note that $T_{\psi,1}$ and $T_{\psi,2}$ correspond to the small and large frequency respectively. We now obtain boundedness results for them seprately.

\subsection{Estimating $T_{\psi,1}$:} From the viewpoint of boundedness, $T_{\psi,1}$ is extremely well-behaved. In fact, we have:
\begin{lemma} \label{small_freq}
The inequality,
\begin{equation*}
{\|T_{\psi,1} f\|}_{L^p(B_R)} \lesssim {\|f\|}_{H^\beta(S)}\:,
\end{equation*}
holds for all $\beta \in [0,\infty)$ and all $p \in [1,\infty]$.
\end{lemma}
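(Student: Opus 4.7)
The plan is to treat $T_{\psi,1}$ by a completely elementary pointwise estimate, since the cutoff restricts the spectral variable to a bounded set where the spectral gap $Q^2/4 > 0$ makes every relevant weight finite. First I would use the uniform bound $|\varphi_\lambda(s)| \le 1$ from (\ref{phi_lambda_bound}), together with $|e^{it(s)\psi(\lambda)}| = 1$ and $|\eta_1(\lambda)| \le 1$, to obtain the pointwise estimate
\begin{equation*}
|T_{\psi,1} f(s)| \le \int_0^2 |\widehat{f}(\lambda)|\, |{\bf c}(\lambda)|^{-2}\, d\lambda,
\end{equation*}
which is already independent of $s$ and $t(\cdot)$.

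Next I would apply Cauchy-Schwarz, inserting and removing the weight $(\lambda^2 + Q^2/4)^{\beta/2}$:
\begin{equation*}
|T_{\psi,1} f(s)| \le \left(\int_0^2 |\widehat{f}(\lambda)|^2 \left(\lambda^2 + \tfrac{Q^2}{4}\right)^{\beta} |{\bf c}(\lambda)|^{-2}\, d\lambda\right)^{1/2} \left(\int_0^2 \left(\lambda^2 + \tfrac{Q^2}{4}\right)^{-\beta} |{\bf c}(\lambda)|^{-2}\, d\lambda\right)^{1/2}.
\end{equation*}
The first factor is bounded by $\|f\|_{H^\beta(S)}$ by the definition (\ref{sobolev_space_defn}). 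The second factor is a constant depending only on $\beta$, $n$, $Q$: since $Q > 0$, the factor $(\lambda^2 + Q^2/4)^{-\beta}$ is bounded above by $(Q/2)^{-2\beta}$ on $[0,2]$, while the Plancherel density $|{\bf c}(\lambda)|^{-2} \asymp \lambda^2 (1+\lambda)^{n-3}$ from (\ref{plancherel_measure}) is integrable near the origin (the $\lambda^2$ factor kills the singularity for any dimension $n \ge 1$) and obviously bounded on $[0,2]$.

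This yields the uniform estimate $\|T_{\psi,1} f\|_{L^\infty(S)} \lesssim \|f\|_{H^\beta(S)}$, which is actually stronger than what we need. To pass from $L^\infty$ to arbitrary $L^p(B_R)$ for $p < \infty$, I would simply use that $B_R$ has finite left Haar measure (as is readily seen from (\ref{density_function})), so
\begin{equation*}
\|T_{\psi,1} f\|_{L^p(B_R)} \le |B_R|^{1/p}\, \|T_{\psi,1} f\|_{L^\infty(B_R)} \lesssim_{R,p} \|f\|_{H^\beta(S)}
\end{equation*}
for every $p \in [1,\infty]$ and every $\beta \ge 0$. There is no genuine obstacle here: the presence of the spectral gap $Q^2/4$ in the Sobolev weight means the low-frequency regime never sees the Sobolev singularity at $\lambda = 0$, so the argument is insensitive to $\beta$, to the specific phase $\psi$, and to the choice of linearizing function $t(s)$.
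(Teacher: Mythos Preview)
Your proof is correct and follows essentially the same approach as the paper: a pointwise bound using $|\varphi_\lambda|\le 1$ and $|e^{it\psi}|=1$, followed by Cauchy--Schwarz and the integrability of $|{\bf c}(\lambda)|^{-2}$ on $[0,2]$. The only cosmetic difference is that the paper reduces to the endpoint $\beta=0$ (using $\|f\|_{H^0(S)}\le\|f\|_{H^\beta(S)}$) rather than inserting the Sobolev weight for general $\beta$ as you do.
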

\begin{proof}
We note that it suffices to prove the estimate at the end-points, $p=\infty$ and $\beta=0$. In this proof, we only use the boundedness of the spherical functions and the multiplier, that is, oscillation plays no role here. Indeed, by the Cauchy-Schwarz inequality and the small frequency asymptotics of the Plancherel measure (\ref{plancherel_measure}), for $0 \le s < R$,
\begin{eqnarray*}
|T_{\psi,1} f(s)| & \le & \int_{0}^2 \left|\widehat{f}(\lambda)\right|\: {|{\bf c}(\lambda)|}^{-2}\: d\lambda \\
& \le & {\left(\int_0^\infty {|\widehat{f}(\lambda)|}^2\: {|{\bf c}(\lambda)|}^{-2} \:d\lambda\right)}^{1/2} {\left(\int_0^2  {|{\bf c}(\lambda)|}^{-2} d\lambda\right)}^{1/2} \\
&\lesssim &  {\|f\|}_{H^0(S)}\:.
\end{eqnarray*}
This proves the desired $H^0(S) \to L^\infty(B_R)$ estimate. 
\end{proof}

\subsection{Estimating $T_{\psi,2}$:} In this subsection, we are interested in the inequalities of the form,
\begin{equation} \label{maximal_estimate} 
{\|T_{\psi,2} f\|}_{L^p(B_R)} \lesssim {\|f\|}_{H^\beta(S)}\:.
\end{equation}
The boundedness properties of $T_{\psi,2}$ are summarized as follows:
\begin{lemma} \label{large_freq}
\begin{itemize} 
\item[(i)] If $1/4 \le \beta < n/2$, then (\ref{maximal_estimate}) holds if   $1\leq p \le 2n/(n-2\beta)$.
\item[(ii)] If $\beta= n/2$, then (\ref{maximal_estimate}) holds if $p \in [1, \infty)$.
\item[(iii)] If $\beta > n/2$, then (\ref{maximal_estimate}) holds for all $p \in [1,\infty]$.
\end{itemize}
\end{lemma}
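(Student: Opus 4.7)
The plan is to estimate $T_{\psi,2}$ separately on the closed inner ball $\overline{B_{R_0}}$ and the open annulus $A(R_0,R)$, where $R_0 \in (2,2R_1)$ is the radius appearing in Lemma~\ref{bessel_series_expansion}. Each region calls for a different series representation of $\varphi_\lambda(s)$, reflecting the fact that the Bessel-type expansion is only available near the identity while the oscillatory Harish-Chandra-type expansion of Anker et al. requires $s$ bounded away from $0$.

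On $\overline{B_{R_0}}$ I would use the Bessel expansion of Lemma~\ref{bessel_series_expansion}, writing
$$\varphi_\lambda(s) = c_0 \left(\frac{s^{n-1}}{A(s)}\right)^{1/2} \sum_{l=0}^{M} a_l(s)\,\J_{(n-2)/2+l}(\lambda s)\,s^{2l} + E_{M+1}(\lambda,s).$$
The leading $l=0$ term is, up to the smooth density factor $(s^{n-1}/A(s))^{1/2}$, the Euclidean spherical function on $\R^n$, so Lemma~\ref{schwartz_correspondence} rewrites the corresponding portion of $T_{\psi,2}f$ as a Euclidean dispersive operator applied to a radial Schwartz function $g \in \mathscr{S}(\R^n)^\infty_o$ determined by $\lambda^{n-1}\mathscr{F}g(\lambda) = \eta_2(\lambda)|{\bf c}(\lambda)|^{-2}\widehat{f}(\lambda)$. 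The claimed $(p,\beta)$ range for this piece then follows from the Euclidean radial results of \cite{Prestini,Sjolin2,DN}. The higher Bessel terms ($l \ge 1$) pick up extra factors $s^{2l}$ together with better decay in $\lambda$ from Lemma~\ref{bessel_function_expansion}, while the remainder $E_{M+1}$ with $M=M(\beta)$ large enjoys strong decay in $\lambda s$; both are controlled termwise by Cauchy-Schwarz in the frequency variable.

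On $A(R_0,R)$ the Bessel expansion is no longer valid, and I would split based on regularity. For $\beta > 1/2$, the pointwise estimate (\ref{pointwise_phi_lambda}) together with Cauchy-Schwarz in $\lambda$ reduces matters to the convergence of $\int_1^\infty \lambda^{-(n-1)-2\beta}|{\bf c}(\lambda)|^{-2}\,d\lambda$; by (\ref{plancherel_measure}) this holds precisely when $\beta > 1/2$, giving an $L^\infty$ bound on $A(R_0,R)$ that covers part~(iii) and the $\beta>1/2$ portions of parts~(i) and~(ii). For the delicate window $1/4 \le \beta \le 1/2$, I would substitute the expansion (\ref{anker_series_expansion}). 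The $\mu \ge 1$ terms carry $e^{-\mu s}$ and the extra frequency decay $|\Gamma_\mu(\lambda)| \lesssim \mu^d(1+|\lambda|)^{-1}$ from (\ref{coefficient_estimate}); this extra $\lambda^{-1}$ factor is what allows a direct pointwise/Cauchy-Schwarz estimate to reach $\beta \ge 1/4$ for these pieces. The $\mu=0$ terms produce the genuine oscillatory integrals
$$I_\pm(s) = A(s)^{-1/2} \int_1^\infty {\bf c}(\pm\lambda)\,e^{\pm i\lambda s + it(s)\psi(\lambda)}\,\eta_2(\lambda)\,\widehat{f}(\lambda)\,|{\bf c}(\lambda)|^{-2}\,d\lambda,$$
which I would attack by a $TT^*$ argument combined with a dyadic decomposition $\eta_2(\lambda)=\sum_{k \ge 1}\eta(2^{-k}\lambda)$. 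On each dyadic piece the $TT^*$ kernel is an oscillatory integral in $\lambda$ with phase $\pm\lambda(s-s')+(t(s)-t(s'))\psi(\lambda)$; Lemmas~\ref{van_der_corput} and \ref{Sjolin_lemma}, together with the derivative estimates (\ref{c-fn_derivative_estimates}), yield decay at the dyadic scale $2^k$ using only the large-frequency asymptotics of $\psi'$ and $\psi''$. Summing the dyadic estimates and applying the one-dimensional Pitt inequality (Lemma~\ref{Pitt's_ineq}) converts the gain into the sharp Sobolev threshold $\beta \ge 1/4$, producing the $L^2$ bound on $A(R_0,R)$; H\"older's inequality on the compact annulus then upgrades this to $L^p$ for $p \le 2n/(n-2\beta)$.

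The main obstacle is the annular low-regularity piece. The dyadic oscillatory integral estimates must be extracted purely from the large-frequency asymptotics of $\psi'$ and $\psi''$, so that a single argument covers the fractional Schr\"odinger, Boussinesq and Beam phases uniformly, and the bounds must be summable in $k$ in such a way that Pitt's inequality lands at exactly the sharp endpoint $\beta=1/4$ rather than some strictly larger index.
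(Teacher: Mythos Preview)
Your strategy matches the paper's almost exactly: the same ball/annulus split, the same Bessel-to-Euclidean transfer on $\overline{B_{R_0}}$, the same $\beta>1/2$ versus $1/4\le\beta\le1/2$ dichotomy on $A(R_0,R)$, and the same $TT^*$ plus dyadic plus Pitt attack on the $\mu=0$ oscillatory pieces. Two remarks, one cosmetic and one substantive.

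First, the paper takes $M=0$ in Lemma~\ref{bessel_series_expansion} rather than a general $M$, so on the ball there is only the leading Bessel term and the single error $E_1$; your higher-$l$ terms are unnecessary (they are absorbed into $E_1$) but not harmful.

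Second, and more importantly, the output of the $TT^*$/Pitt argument on the annulus is an $L^4$ bound, not an $L^2$ bound. In the paper the adjoint $P^*$ is shown to map $L^{4/3}(R_0,R)$ to $L^2(d\lambda)$ via Pitt's inequality with parameters $\beta=1/4$, $p=4/3$, so by duality $P:L^2(d\lambda)\to L^4(R_0,R)$, whence ${\|T_{\psi,5}f\|}_{L^4(A(R_0,R))}\lesssim{\|f\|}_{H^{1/4}(S)}$. This matters: for $1/4\le\beta\le1/2$ the target exponent is $2n/(n-2\beta)$, which already exceeds $2$ (e.g.\ equals $8/3$ when $n=2$, $\beta=1/4$), so an $L^2$ bound on the annulus would be insufficient. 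Moreover, H\"older's inequality on a compact set moves \emph{down} in $p$, not up; it cannot ``upgrade'' $L^2$ to $L^{2n/(n-2\beta)}$. What actually happens is that the $L^4$ bound on the annulus dominates the needed exponent because $2n/(n-2\beta)\le 2n/(n-1)\le 4$ for $n\ge2$ and $\beta\le1/2$, so H\"older (downward) suffices to match the ball estimate.
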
 

\begin{remark}
In view of the decomposition given in (\ref{first_decomposition}) and the boundedness properties of $T_{\psi,1}$ (Lemma \ref{small_freq}), to complete the proof of the sufficient conditions of Theorem \ref{theorem}, it suffices to prove Lemma \ref{large_freq}.
\end{remark}

The key strategy in the proof of Lemma \ref{large_freq} is to invoke the various series expansions of $\varphi_\lambda$. Now as the validity of these series expansions depend on the geodesic distance from the group identity, it prompts us to decompose the ball $B_R$ into a closed ball and an open annulus, $$B_R = \overline{B_{R_0}} \sqcup A(R_0,R)\:,$$
where
\begin{equation*}
\overline{B_{R_0}}=\{x \in S : d(e,x)\leq R_0 \}\:\text{ and } A(R_0,R) = \{x \in S : R_0 < d(e,x) < R\}\:,
\end{equation*}
and then to estimate $T_{\psi,2}$ on $\overline{B_{R_0}}$ and $A(R_0,R)$ separately.

\subsubsection{Estimating $T_{\psi,2}$ on $\overline{B_{R_0}}$\::}
Here, we focus on inequalities of the form:
\begin{equation} \label{maximal_estimate_ball}
{\|T_{\psi,2}f\|}_{L^p\left(\overline{B_{R_0}}\right)} \lesssim {\|f\|}_{H^{\beta}(S)}\:.
\end{equation}
\begin{lemma} \label{large_freq_ball}
\begin{itemize}
\item[(i)] If $1/4 \le \beta < n/2$, then (\ref{maximal_estimate_ball}) holds if   $1 \le p \le 2n/(n-2\beta)$.
\item[(ii)] If $\beta = n/2$, then (\ref{maximal_estimate_ball}) holds if $p \in [1,\infty)$.
\item[(iii)] If $\beta > n/2$, then (\ref{maximal_estimate_ball}) holds for all $p \in [1,\infty]$.
\end{itemize}
\end{lemma}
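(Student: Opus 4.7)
The strategy is to replace $\varphi_\lambda(s)$, for $s\in \overline{B_{R_0}}$, by its Bessel series expansion from Lemma \ref{bessel_series_expansion}, which converts the integral defining $T_{\psi,2}f(s)$ into a finite sum of integrals that look exactly like radial solutions of the dispersive equation on Euclidean spaces, plus a controllable error. Once this reduction is in place, the sought-after inequalities follow from the analogous Euclidean radial local maximal estimates in the stated range, which are known from Sj\"olin \cite{Sjolin2} and Ding--Niu \cite{DN}; this is the spirit of the argument used by the author and Ray in \cite{DR} for the Schr\"odinger equation.

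Concretely, I would fix an integer $M=M(n,\beta)$ (to be chosen large below), apply Lemma \ref{bessel_series_expansion} inside the definition of $T_{\psi,2}f(s)$, and use $A(s)\asymp s^{n-1}$ from (\ref{density_function}) together with $|a_l(s)|\lesssim 1$ and $s^{2l}\lesssim 1$ on $\overline{B_{R_0}}$, to write
\begin{equation*}
T_{\psi,2}f(s) = \sum_{l=0}^M b_l(s)\,I_l(s) + R_{M+1}(s),
\end{equation*}
where $b_l\in L^\infty(\overline{B_{R_0}})$, the error $R_{M+1}$ absorbs $E_{M+1}$, and
\begin{equation*}
I_l(s)=\int_1^\infty \J_{\frac{n-2}{2}+l}(\lambda s)\,e^{it(s)\psi(\lambda)}\,\widehat{f}(\lambda)\,\eta_2(\lambda)\,|{\bf c}(\lambda)|^{-2}\,d\lambda.
\end{equation*}
For each $l$, using Lemma \ref{schwartz_correspondence} I would rewrite $I_l(s)$ as an inverse Euclidean spherical Fourier transform in radial dimension $n+2l$; up to the smooth frequency cutoff $\eta_2$, this is precisely the solution at time $t(s)$ of the corresponding Euclidean dispersive equation with phase $\psi$, applied to a radial Schwartz datum $g_l$. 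The $L^p(\overline{B_{R_0}})$-bound for $I_l$ then follows from the Euclidean radial local (in space) maximal estimate of \cite{Sjolin2, DN} in exactly the range of $(p,\beta)$ stated in the lemma, and the Plancherel-measure asymptotic $|{\bf c}(\lambda)|^{-2}\asymp \lambda^{n-1}$ on $\mathrm{Supp}(\eta_2)\subset[1,\infty)$ from (\ref{plancherel_measure}), combined with Lemma \ref{schwartz_correspondence}, gives $\|g_l\|_{\dot H^\beta(\R^{n+2l})}\lesssim \|f\|_{H^\beta(S)}$.

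The error piece $R_{M+1}$ is handled by pointwise bounds on $\overline{B_{R_0}}$: splitting the $\lambda$-integral at $|\lambda s|=1$, inserting the bounds on $E_{M+1}$ from Lemma \ref{bessel_series_expansion}, and then applying Cauchy--Schwarz in $\lambda$ against the weight $(\lambda^2+Q^2/4)^\beta\,|{\bf c}(\lambda)|^{-2}$, the resulting weighted $L^2$-integral converges once $M$ is chosen large enough relative to $n$ and $\beta$; this yields a uniform pointwise bound on $\overline{B_{R_0}}$ by $\|f\|_{H^\beta(S)}$, good for all $p\in[1,\infty]$. The endpoint regime $\beta\ge n/2$ bypasses the Euclidean reduction entirely: a direct Cauchy--Schwarz on $T_{\psi,2}f(s)$ against $|{\bf c}(\lambda)|^{-2}$, combined with (\ref{plancherel_measure}), gives the $L^\infty$-bound when $\beta>n/2$ and the $L^p$-bound for all $p<\infty$ when $\beta=n/2$.

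The step I expect to be the main obstacle is the $l=0$ main piece in the intermediate range $1/4\le \beta<n/2$: while the reduction of $I_0$ to a Euclidean object is conceptually clean, one has to check carefully that the frequency cutoff $\eta_2$ and the Schwartz correspondence of Lemma \ref{schwartz_correspondence} interact correctly with the Euclidean radial maximal inequalities, and that the Euclidean results of \cite{Sjolin2, DN} can be invoked at the sharp pair $(p,\beta)$ for every one of the phase functions $\psi$ (fractional Schr\"odinger, Boussinesq, Beam, shifted or unshifted) in the scope of Theorem \ref{theorem}. The higher-$l$ pieces are cheaper because the extra factor $s^{2l}$ and the increased Bessel index only improve the Euclidean analogues.
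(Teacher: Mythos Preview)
Your approach is the paper's: expand $\varphi_\lambda$ via Lemma~\ref{bessel_series_expansion}, reduce the leading Bessel piece to a Euclidean radial maximal operator, and invoke \cite{DN}. The paper simplifies by taking $M=0$, so there is a single main term $T_{\psi,3}$ (lifted to $\R^n$ via Lemma~\ref{schwartz_correspondence} exactly as you describe for $l=0$) and a single error term $T_{\psi,4}$, whose mapping range is already recorded in \cite[Lemma~4.5]{DR}; no higher-$l$ terms and no large-$M$ choice are needed.

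Two of your shortcuts need tightening. First, lifting $I_l$ to $\R^{n+2l}$ and applying \cite{DN} at Sobolev index $\beta$ yields only $p\le 2(n+2l)/(n+2l-2\beta)$, which is \emph{smaller} than $2n/(n-2\beta)$ (the function $m\mapsto 2m/(m-2\beta)$ is decreasing); to recover the full range you must exploit the extra $\lambda^{-2l}$ gained in $\mathscr{F}g_l$, i.e.\ use $\|g_l\|_{H^{\beta+l}(\R^{n+2l})}\lesssim\|f\|_{H^\beta(S)}$ and apply the Euclidean result at index $\beta+l$, which gives $p\le 2(n+2l)/(n-2\beta)$. Second, a direct Cauchy--Schwarz at $\beta=n/2$ produces the divergent integral $\int_1^\infty \lambda^{-1}\,d\lambda$, so part~(ii) does not follow that way; it follows instead from part~(i) by monotonicity of the Sobolev norms, picking $\beta'<n/2$ with $p\le 2n/(n-2\beta')$.
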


To prove Lemma \ref{large_freq_ball}, we make use of the Bessel series expansion of $\varphi_\lambda$ (Lemma \ref{bessel_series_expansion}). For $s\in [0,R_0]$, putting $M=0$ in Lemma \ref{bessel_series_expansion}, we get
\begin{equation} \label{ball_pf_eq1}
\varphi_\lambda(s)= c_0 {\left(\frac{s^{n-1}}{A(s)}\right)}^{1/2} \J_{\frac{n-2}{2}}(\lambda s) + E_1(\lambda,s),\:\:\:\:\:\:\:\:\:\:\lambda \ge 1\:,
\end{equation}
where
\begin{equation} \label{ball_pf_eq2}
	\left|E_1(\lambda,s) \right| \lesssim  \begin{cases}
	 s^2,  & \text{ if  }\: \lambda s \le 1 \\
	s^2 {(\lambda s)}^{-\left(\frac{n+1}{2} \right)}, &\text{ if  }\: \lambda s > 1 \:.
	\end{cases}
\end{equation}
Then invoking the decomposition of $\varphi_\lambda$ given in (\ref{ball_pf_eq1}), we further decompose $T_{\psi,2}f$ as follows
\begin{equation} \label{second_decomposition}
T_{\psi,2}f(s)=T_{\psi,3}f(s)+T_{\psi,4}f(s)
\end{equation}
where
\begin{eqnarray*}
T_{\psi,3}f(s) &=& c_0 {\left(\frac{s^{n-1}}{A(s)}\right)}^{1/2} \int_{1}^\infty \J_{\frac{n-2}{2}}(\lambda s)\:e^{it(s)\psi(\lambda)}\:\widehat{f}(\lambda)\:\eta_2(\lambda)\: {|{\bf c}(\lambda)|}^{-2}\: d\lambda\:, \\
T_{\psi,4}f(s)&=& \int_{1}^\infty E_1(\lambda, s)\:e^{it(s)\psi(\lambda)}\:\widehat{f}(\lambda)\:\eta_2(\lambda)\: {|{\bf c}(\lambda)|}^{-2}\: d\lambda\:. 
\end{eqnarray*}
Now it suffices to look at inequalities of the form,
\begin{equation} \label{maximal_estimate_ball_pieces}
{\|T_{\psi,j}f\|}_{L^p\left(\overline{B_{R_0}}\right)} \lesssim {\|f\|}_{H^{\beta}(S)},\:\:\:\text{ for }j=3,4.
\end{equation}
Hence in view of the decomposition given in (\ref{second_decomposition}), Lemma \ref{large_freq_ball} follows from the following lemmata:
\begin{lemma} \label{large_freq_ball_piece1}
 For $T_{\psi,3}$ we have,
\begin{itemize}
\item[(i)] If $1/4 \le \beta < n/2$, then (\ref{maximal_estimate_ball_pieces}) holds if   $1 \le p \le 2n/(n-2\beta)$.
\item[(ii)] If $\beta = n/2$, then (\ref{maximal_estimate_ball_pieces}) holds if $p \in [1,\infty)$.
\item[(iii)] If $\beta > n/2$, then (\ref{maximal_estimate_ball_pieces}) holds for all $p \in [1,\infty]$.
\end{itemize}
\end{lemma}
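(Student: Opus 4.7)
The plan is to reduce this estimate to the corresponding Euclidean radial maximal inequality, following the transfer-to-$\R^n$ strategy from \cite{DR}. On $[0,R_0]$ the density asymptotic (\ref{density_function}) gives $A(s)\asymp s^{n-1}$, so the prefactor $(s^{n-1}/A(s))^{1/2}$ in the definition of $T_{\psi,3}$ is bounded above and below, and the left Haar measure $A(s)\,ds$ on $\overline{B_{R_0}}$ is comparable to the radial Lebesgue measure $s^{n-1}\,ds$ on the Euclidean ball $B_{R_0}^{\R^n}$.

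Since $\eta_2$ is supported in $\{|\lambda|>1\}$, the cut-off datum $\widehat{f}(\lambda)\eta_2(\lambda)$ lies in $\mathscr{S}(\R)_e^\infty$, hence by Lemma \ref{schwartz_correspondence} there is a unique radial $g\in\mathscr{S}(\R^n)_o^\infty$ with $\lambda^{n-1}\mathscr{F}g(\lambda)={|{\bf c}(\lambda)|}^{-2}\widehat{f}(\lambda)\eta_2(\lambda)$. Substituting this identity into the definition of $T_{\psi,3}f$ yields
\begin{equation*}
T_{\psi,3}f(s) = c_0 {\left(\frac{s^{n-1}}{A(s)}\right)}^{1/2} \int_0^\infty \J_{\frac{n-2}{2}}(\lambda s)\: e^{it(s)\psi(\lambda)}\: \mathscr{F}g(\lambda)\: \lambda^{n-1}\: d\lambda,
\end{equation*}
which is, up to the bounded prefactor, the radial Euclidean linearized dispersive operator of phase $\psi$ applied to $g$ and evaluated at Euclidean distance $s$ from the origin.

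Combining these two observations, the desired inequality (\ref{maximal_estimate_ball_pieces}) for $T_{\psi,3}$ reduces, up to multiplicative constants, to the Euclidean radial local maximal estimate ${\|S^*_{\psi,\R^n}g\|}_{L^p(B_{R_0}^{\R^n})}\lesssim {\|g\|}_{H^\beta(\R^n)}$. In the stated range of $(\beta,p)$ this Euclidean estimate is known: for the fractional Schr\"odinger phase it is a theorem of Sj\"olin \cite{Sjolin2}, and the Boussinesq and Beam cases (together with their shifts) fall under the framework of Ding--Niu \cite{DN}. I would quote this result directly, and then close by the norm comparison: since only frequencies $\lambda\ge 1$ contribute, the asymptotics ${|{\bf c}(\lambda)|}^{-2}\asymp \lambda^{n-1}$ and $\lambda^2+Q^2/4\asymp \lambda^2$ combine to give ${\|g\|}_{H^\beta(\R^n)}^2\asymp \int_1^\infty(\lambda^2+Q^2/4)^\beta|\widehat{f}(\lambda)|^2|{\bf c}(\lambda)|^{-2}d\lambda\le {\|f\|}_{H^\beta(S)}^2$.

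The main obstacle, as I see it, is the compatibility of the cited Euclidean estimates with the unshifted phases $\psi(\lambda)=\Psi(\sqrt{\lambda^2+Q^2/4})$, which are not literally of the canonical Euclidean form $\Psi(\lambda)$. For $\lambda\ge 1$ however, $\psi'$ and $\psi''$ inherit the same large-frequency asymptotic order as $\Psi',\Psi''$, so the oscillatory-integral hypotheses of \cite{Sjolin2,DN} remain in force; alternatively, the shifted and unshifted cases may be interchanged by the transference principle (Theorem \ref{transference_principle}) proved in \S 4, in view of Remark \ref{examples_remark}.
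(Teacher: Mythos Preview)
Your approach is essentially the same as the paper's: transfer to $\R^n$ via Lemma \ref{schwartz_correspondence}, compare $|T_{\psi,3}f(s)|$ pointwise with the Euclidean radial linearized maximal function applied to $g$, invoke \cite[Theorem 1]{DN} under the derivative hypotheses (\ref{phase_fn_properties}), and close with the Sobolev norm comparison $\|g\|_{H^\beta(\R^n)}\asymp\|F\|_{H^\beta(S)}\le\|f\|_{H^\beta(S)}$. One caution on your ``alternative'' via the transference principle: by Remark \ref{examples_remark}(i) the unshifted and shifted fractional Schr\"odinger equations are \emph{not} of comparable oscillation when $a>2$, so Theorem \ref{transference_principle} does not cover that case---stick with your primary argument (direct verification that $\psi',\psi''$ satisfy the hypotheses of \cite{DN} for $\lambda\ge1$), which is exactly what the paper does via Table \ref{table:1}.
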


\begin{lemma} \label{large_freq_ball_piece2}
For $T_{\psi,4}$ we have,
\begin{itemize}
\item[(i)] If $n \le 4$, then (\ref{maximal_estimate_ball_pieces}) holds for all $\beta \in [0,\infty)$ and all $p \in [1,\infty]$.
\item[(ii)] If $n \ge 5$, then
\begin{itemize}
\item[(a)] If $\beta < \frac{n}{2}-2$, then (\ref{maximal_estimate_ball_pieces}) holds if $1 \le p < 2n/(n-2\beta - 4)$.
\item[(b)] If $\beta \ge \frac{n}{2}-2$, then (\ref{maximal_estimate_ball_pieces}) holds for all $p \in [1,\infty]$.
\end{itemize}
\end{itemize}
\end{lemma}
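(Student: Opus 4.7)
The strategy is that, unlike $T_{\psi,3}$, the operator $T_{\psi,4}$ is built from the pure error term $E_1$ in the Bessel expansion, so no oscillation in $\psi$ need be exploited; a Cauchy--Schwarz argument combined with the two-regime estimate (\ref{ball_pf_eq2}) will produce the required $L^p$ bounds. Pairing $|\widehat f(\lambda)|(\lambda^2+Q^2/4)^{\beta/2}$ against $|E_1(\lambda,s)|(\lambda^2+Q^2/4)^{-\beta/2}$ under the Plancherel measure $|{\bf c}(\lambda)|^{-2}\,d\lambda$ gives
\begin{equation*}
|T_{\psi,4}f(s)| \le \|f\|_{H^\beta(S)}\, I(s)^{1/2},\qquad I(s):=\int_1^\infty |E_1(\lambda,s)|^2\bigl(\lambda^2+\tfrac{Q^2}{4}\bigr)^{-\beta}|{\bf c}(\lambda)|^{-2}\,d\lambda,
\end{equation*}
and it remains to estimate $\|I^{1/2}\|_{L^p([0,R_0],\,A(s)\,ds)}$ with $A(s)\asymp s^{n-1}$.

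Next, I would estimate $I(s)$ using $|{\bf c}(\lambda)|^{-2}\asymp\lambda^{n-1}$ for $\lambda\ge 1$ from (\ref{plancherel_measure}) and splitting the $\lambda$-integral at $\lambda=1/s$ when $s\le 1$. On $[1,1/s]$ the bound $|E_1|^2\lesssim s^4$ contributes $\lesssim s^{4-n+2\beta}$ when $\beta<n/2$, $\lesssim s^4\log(1/s)$ when $\beta=n/2$, and $\lesssim s^4$ when $\beta>n/2$. On $[1/s,\infty)$ the bound $|E_1|^2\lesssim s^4(\lambda s)^{-(n+1)}$ integrates to $\lesssim s^{4-n+2\beta}$. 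For $s\in(1,R_0]$ only the second regime is relevant and one obtains $I(s)\lesssim s^{3-n}$, which is bounded on that compact interval. Thus $I(s)^{1/2}\lesssim s^{-(n-2\beta-4)/2}$ near $0$, up to a harmless logarithm at $\beta=n/2$, and is bounded away from $0$.

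Finally, I would test $L^p$-integrability against the weight $s^{n-1}$. If $n-2\beta-4\le 0$---which holds automatically when $n\le 4$ and $\beta\ge 0$, or when $n\ge 5$ and $\beta\ge n/2-2$---then $I^{1/2}$ is bounded on $[0,R_0]$ and every $p\in[1,\infty]$ works, yielding (i) and (ii)(b). Otherwise, for $n\ge 5$ and $\beta<n/2-2$,
\begin{equation*}
\|I^{1/2}\|_{L^p(A(s)\,ds)}^p \asymp \int_0^1 s^{-p(n-2\beta-4)/2+n-1}\,ds
\end{equation*}
is finite precisely when $p<2n/(n-2\beta-4)$, giving (ii)(a). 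The argument is entirely elementary and I expect no substantive obstacle; the only mild subtlety is the logarithmic factor at the threshold $\beta=n/2$, which is absorbed by the power weight $s^{n-1}$ for every $p$.
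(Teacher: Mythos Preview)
Your proposal is correct and follows essentially the same approach the paper indicates: the paper does not spell out a proof but states that the lemma ``follows by using the boundedness of the multiplier and the pointwise decay of the error term $E_1$ given in (\ref{ball_pf_eq2}) and is done in \cite[Lemma 4.5, p.16]{DR}''---precisely your Cauchy--Schwarz-plus-pointwise-estimate strategy. Your splitting at $\lambda=1/s$, the resulting bound $I(s)^{1/2}\lesssim s^{-(n-2\beta-4)/2}$ near the origin, and the subsequent $L^p$ threshold computation are all accurate.
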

\begin{remark}
As $2n/(n-2\beta - 4)$ is larger than $2n/(n-2\beta )$, Lemmata \ref{large_freq_ball_piece1} and \ref{large_freq_ball_piece2} show that the operator $T_{\psi,4}$ is better behaved compared to $T_{\psi,3}$. 
\end{remark}
Lemma \ref{large_freq_ball_piece2} follows by using the boundedness of the multiplier and the pointwise decay of the error term $E_1$ given in (\ref{ball_pf_eq2}) and is done in \cite[Lemma 4.5, p.16]{DR}. We now focus on Lemma \ref{large_freq_ball_piece1}.
\begin{proof}[Proof of Lemma \ref{large_freq_ball_piece1}]
As $Supp(\widehat{f}\cdot \eta_2) \subset [1,\infty)$, by the Schwartz isomorphism theorem and Lemma \ref{schwartz_correspondence}, there exist $F \in \mathscr{S}^2(S)_o$ and $g \in \mathscr{S}(\R^n)_o$  such that
\begin{equation} \label{piece1_eq1}
\widehat{f}(\lambda) \eta_2(\lambda) = \widehat{F}(\lambda)\:,
\end{equation}
whereas $\mathscr{F}g$ and $\widehat{F}$ are related as follows,
\begin{equation} \label{piece1_eq2}
\lambda^{n-1}\:\mathscr{F}g(\lambda)=\:{|{\bf c}(\lambda)|}^{-2}\:\widehat{F}(\lambda) \:.
\end{equation}
Plugging the relation (\ref{piece1_eq2}) in the definition of the Sobolev norms and then using the large frequency asymptotics of ${|{\bf c}(\cdot)|}^{-2}$ given in (\ref{plancherel_measure}), it follows that the Sobolev norms of $g$ and $F$ are comparable, that is for all $\beta \ge 0$,
\begin{equation} \label{piece1_eq3}
\|g\|_{H^\beta(\R^n)} \asymp \|F\|_{H^\beta(S)}\:.
\end{equation} 

We next note that there exist real numbers $\delta_1>0$ and $\delta_2>1$ such that the first and the second derivatives of the phase function $\psi$, for the equations we are interested in, satisfy the following asymptotics:
\begin{equation} \label{phase_fn_properties}
\begin{cases}
	 |\psi'(\lambda)| \lesssim \lambda^{\delta_1 -1}\:,\text{  for } \lambda \in (0,1)  \:,\\
	 |\psi'(\lambda)| \lesssim \lambda^{\delta_2 -1}\:,\text{  for } \lambda \ge 1  \:,\\
	|\psi''(\lambda)| \asymp \lambda^{\delta_2 -2}\:,\text{  for } \lambda \ge 1  \:,
	\end{cases}
\end{equation}
with the precise case by case choices for $\delta_1$ and $\delta_2$ given in Table \ref{table:1}.


\begin{table}[h!]
\centering
\begin{tabular}{ |p{2cm}|p{7cm}|p{1cm}|p{1cm}|  }
 \hline
 Dispersive equation & Phase function of the multiplier $\psi(\lambda)$ & $\delta_1$ & $\delta_2$\\
 \hline
Fractional Schr\"odinger   &  ${\left(\lambda^2 + \frac{Q^2}{4}\right)}^{a/2}$   & $2$ &   $a$\\
\hline
Fractional Schr\"odinger (shifted) &  \:\:$\lambda^a$   & $a$ &   $a$\\
\hline
Boussinesq & ${\left(\lambda^2 + \frac{Q^2}{4}\right)}^{1/2}{\left(\lambda^2 + \frac{Q^2}{4}+1\right)}^{1/2}$ & $2$ &  $2$\\
\hline
Boussinesq (shifted) & $\lambda {\left(\lambda^2 + 1\right)}^{1/2}$ & $1$ &  $2$\\
\hline
Beam & ${\left\{1+\left(\lambda^2 + \frac{Q^2}{4}\right)^2\right\}}^{1/2}$ & $2$ &  $2$\\
\hline
Beam (shifted) & ${\left(\lambda^4 + 1\right)}^{1/2}$ & $4$ &  $2$\\
\hline
\end{tabular}

\medskip

\caption{}
\label{table:1}
\end{table}

\medskip

We now consider the corresponding Dispersive equations on $\R^n$ with radial initial data:
\begin{equation*} 
\begin{cases}
	 i\frac{\partial u}{\partial t} +\psi(\sqrt{-\Delta_{\R^n}} )u=0\:,\:\:\:  (x,t) \in \R^n \times \R \\
	u(0,\cdot)=h\:,\: \text{ on } \R^n \:,
	\end{cases}
\end{equation*}
and the Euclidean counter-part $T_{\psi,0}$ of the operator $T_{\psi}$, defined on $\mathscr{S}(\R^n)_o$, for $s \in [0,\infty)$  by,
\begin{equation*}
T_{\psi,0}\:h(s)= \int_{0}^\infty \J_{\frac{n-2}{2}}(\lambda s)\:e^{it(s)\psi(\lambda)}\:\mathscr{F}h(\lambda)\: \lambda^{n-1}\: d\lambda,\:\text{ for } h \in  \mathscr{S}(\R^n)_o\:,
\end{equation*}
where $t(\cdot)$ is the same measurable function on $[0,\infty)$. We now look at the $H^\beta(\R^n) \to L^p(B(o,R))$ boundedness properties of $T_{\psi,0}$, that is, inequalities of the form:
\begin{equation} \label{Euclidean_inequality}
{\|T_{\psi,0}\:h\|}_{L^p(B(o,R))} \lesssim {\|h\|}_{H^{\beta}(\R^n)}\:\:.
\end{equation} 
By properties of the phase functions (\ref{phase_fn_properties}), applying \cite[Theorem 1]{DN}, one has the following description:
\begin{equation} \label{Euclidean_mapping}
\begin{cases}
\text{If } 1/4 \le \beta < n/2, \text{ then } (\ref{Euclidean_inequality}) \text{ holds if }   1\leq p \le 2n/(n-2\beta) \:,\\
\text{If } \beta= n/2, \text{ then } (\ref{Euclidean_inequality}) \text{ holds if } p \in [1, \infty) \:,\\
\text{If } \beta > n/2, \text{ then } (\ref{Euclidean_inequality}) \text{ holds for all } p \in [1,\infty]\:.
\end{cases}
\end{equation}

Next using the local growth asymptotics of the density function (\ref{density_function}) for $s \in [0,R_0]$, we obtain a pointwise comparison of the moduli of $T_{\psi,3} f$ and $T_{\psi,0}\:g$ where $f$ and $g$ are related via (\ref{piece1_eq1}) and (\ref{piece1_eq2}),
\begin{eqnarray} \label{linearized_max_fn_comparison}
|T_{\psi,3} f(s)| & \asymp & \left|\int_{1}^\infty \J_{\frac{n-2}{2}}(\lambda s)\:e^{it(s)\psi(\lambda)}\:\widehat{f}(\lambda)\:\eta_2(\lambda)\: {|{\bf c}(\lambda)|}^{-2}\: d\lambda\right|\nonumber\\
&=& \left|\int_{1}^\infty \J_{\frac{n-2}{2}}(\lambda s)\:e^{it(s)\psi(\lambda)}\:\widehat{F}(\lambda)\: {|{\bf c}(\lambda)|}^{-2}\: d\lambda\right|\nonumber\\
&=& \left|\int_{1}^\infty \J_{\frac{n-2}{2}}(\lambda s)\:e^{it(s)\psi(\lambda)}\:\mathscr{F}g(\lambda)\: \lambda^{n-1}\: d\lambda\right|\nonumber \\
& = & |T_{\psi,0}\:g(s)|\:.
\end{eqnarray}
Finally, for any admissible pair $(p,\beta) \in [1,\infty] \times [0,\infty)$ appearing in the hypothesis of Lemma \ref{large_freq_ball_piece1}, we have by applying the growth asymptotics of the density function (\ref{density_function}) and the data given in (\ref{piece1_eq1})-(\ref{piece1_eq3}) and (\ref{Euclidean_mapping})-(\ref{linearized_max_fn_comparison}),
\begin{equation*}
\|T_{\psi,3} f\|_{L^p(\overline{B_{R_0}})} \lesssim \|T_{\psi,0}\:g\|_{L^p(\overline{B(o,R_0)})} \lesssim {\|g\|}_{H^{\beta}(\R^n)} \lesssim {\|F\|}_{H^{\beta}(S)} \le {\|f\|}_{H^{\beta}(S)}\:. 
\end{equation*}
This completes the proof of Lemma \ref{large_freq_ball_piece1}.
\end{proof}

\begin{remark}
Lemma \ref{large_freq_ball_piece1} is proved by forming an Euclidean connection and then lifting results on $\R^n$ to $S$. The role of the oscillation afforded by the Bessel functions and the multipliers is dormant in our arguments, as it plays a crucial role in the proof of the Euclidean result \cite[Theorem 1]{DN} itself. 
\end{remark}

\subsubsection{Estimating $T_{\psi,2}$ on $A(R_0,R)$\::}
Here, we focus on inequalities of the form:
\begin{equation} \label{maximal_estimate_annulus}
{\|T_{\psi,2}f\|}_{L^p\left(A(R_0,R)\right)} \lesssim {\|f\|}_{H^{\beta}(S)}\:.
\end{equation}
It will become gradually clear that from the viewpoint of boundedness, the behavior of  $T_{\psi,2}$ is better on the annulus $A(R_0,R)$ compared to, on the ball $\overline{B_{R_0}}$. The first instance is the higher regularity scenario, more precisely, when $\beta> 1/2$:  
\begin{lemma}\label{triviality}
If $\beta > 1/2$, then (\ref{maximal_estimate_annulus}) holds for all $p \in [1,\infty]$.
\end{lemma}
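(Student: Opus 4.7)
The plan is to bypass any oscillatory analysis entirely and rely on the pointwise decay of the spherical functions, which is the feature that distinguishes the annulus from the ball. Recall that on $A(R_0,R)$ we have $s > R_0 > 2$, so for $\lambda \ge 1$ the estimate \eqref{pointwise_phi_lambda} yields
\begin{equation*}
|\varphi_\lambda(s)| \lesssim \lambda^{-\frac{n-1}{2}}\, e^{-\frac{Q}{2}s}.
\end{equation*}
Inserting this into the definition of $T_{\psi,2}f(s)$ and using the trivial bound $|e^{it(s)\psi(\lambda)}|=1$ and $|\eta_2|\le 1$, we obtain the pointwise estimate
\begin{equation*}
|T_{\psi,2}f(s)| \lesssim e^{-\frac{Q}{2}s}\int_{1}^{\infty}\lambda^{-\frac{n-1}{2}}\,|\widehat{f}(\lambda)|\,|{\bf c}(\lambda)|^{-2}\, d\lambda.
\end{equation*}

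Next I would apply the Cauchy--Schwarz inequality, pairing $(\lambda^{2}+Q^{2}/4)^{\beta/2}|\widehat{f}(\lambda)||{\bf c}(\lambda)|^{-1}$ against $\lambda^{-(n-1)/2}(\lambda^{2}+Q^{2}/4)^{-\beta/2}|{\bf c}(\lambda)|^{-1}$. The first factor integrates in $L^2((0,\infty),d\lambda)$ to $\|f\|_{H^\beta(S)}$ by the definition \eqref{sobolev_space_defn}. The second requires us to bound
\begin{equation*}
\int_{1}^{\infty}\lambda^{-(n-1)}\,(\lambda^{2}+Q^{2}/4)^{-\beta}\,|{\bf c}(\lambda)|^{-2}\, d\lambda.
\end{equation*}
Using the large-frequency Plancherel asymptotics \eqref{plancherel_measure}, namely $|{\bf c}(\lambda)|^{-2}\asymp \lambda^{n-1}$ for $\lambda\ge 1$, this integral reduces to $\int_{1}^{\infty}(\lambda^{2}+Q^{2}/4)^{-\beta}\,d\lambda$, which is finite precisely when $2\beta>1$, i.e., under our standing hypothesis $\beta>1/2$. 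This produces the uniform pointwise bound $|T_{\psi,2}f(s)|\lesssim e^{-Qs/2}\|f\|_{H^\beta(S)}$ for $s\in(R_0,R)$.

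Finally I would pass from this pointwise estimate to the $L^p(A(R_0,R))$ bound for every $p\in[1,\infty]$. The case $p=\infty$ is immediate since $e^{-Qs/2}\le e^{-QR_0/2}$ on the annulus. For $p<\infty$, one integrates against the Haar density $A(s)$ over the compact interval $[R_0,R]$; since both $e^{-pQs/2}$ and $A(s)$ are continuous (hence bounded) there, the resulting constant depends only on $R_0$, $R$, $p$ and $Q$. There is no real obstacle in this argument; the whole point of the lemma is that in the regime $\beta>1/2$, the decay afforded by $\varphi_\lambda$ away from the identity overwhelms the growth of the Plancherel density, so oscillatory cancellation is never invoked, and the genuinely delicate analysis is postponed to the lower-regularity range $1/4\le\beta\le 1/2$.
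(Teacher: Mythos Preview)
Your proof is correct and follows essentially the same route as the paper: both use the pointwise decay \eqref{pointwise_phi_lambda} of $\varphi_\lambda$ on the annulus, apply Cauchy--Schwarz with the Sobolev weight, and reduce via the Plancherel asymptotics \eqref{plancherel_measure} to the convergence of $\int_1^\infty \lambda^{-2\beta}\,d\lambda$ for $\beta>1/2$. The paper simply notes that it suffices to treat the endpoint $p=\infty$, whereas you also spell out the trivial passage to $p<\infty$; otherwise the arguments coincide.
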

\begin{proof}
It suffices to prove the lemma at the end-point, $p=\infty$. The proof only uses the boundedness of the multiplier and the pointwise decay of $\varphi_\lambda$ given in (\ref{pointwise_phi_lambda}). Indeed, for $s\in (R_0,R)$, an application of the Cauchy-Schwarz inequality yields
\begin{eqnarray*}
|T_{\psi,2}f(s)| & \lesssim & e^{-\frac{Q}{2}s} \int_{1}^\infty \lambda^{-\left(\frac{n-1}{2}\right)}\: \left|\widehat{f}(\lambda)\right|\: {|{\bf c}(\lambda)|}^{-2}\: d\lambda \\
& \le & e^{-\frac{Q R_0}{2}} {\left(\int_1^\infty {\left(\lambda^2 + \frac{Q^2}{4}\right)}^\beta {|\widehat{f}(\lambda)|}^2 {|{\bf c}(\lambda)|}^{-2} d\lambda\right)}^{1/2} {\left(\int_1^{\infty}\frac{{|{\bf c}(\lambda)|}^{-2}\: d\lambda}{{\left(\lambda^2 + \frac{Q^2}{4}\right)}^\beta \lambda^{n-1}} \right)}^{1/2} \\
& \lesssim & {\|f\|}_{H^\beta(S)}\: {\left(\int_1^{\infty}\frac{ d\lambda}{\lambda^{2 \beta}}\right)}^{1/2}\:.
\end{eqnarray*}
As $\beta > 1/2$, the last integral converges and this completes the proof.
\end{proof}

We now focus on the lower regularity case, that is, when  $1/4 \le \beta \le 1/2$:
\begin{lemma} \label{large_freq_lemma_annulus}
If $1/4 \le \beta \le 1/2$, then (\ref{maximal_estimate_annulus}) holds if $p \in [1,4]$.
\end{lemma}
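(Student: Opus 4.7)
Since $A(R_0,R)$ has finite measure and higher Sobolev regularity only makes the hypothesis stronger, the inequality for all $p\in[1,4]$ and all $\beta\in[1/4,1/2]$ follows from H\"older's inequality once the endpoint case $p=4$, $\beta=1/4$ is established. So the plan is to concentrate on that endpoint.

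On the annulus $A(R_0,R)$ the Bessel series expansion is no longer valid, so I would plug the Anker-Pierfelice-Vallarino series expansion \eqref{anker_series_expansion} into the definition of $T_{\psi,2}f$. Writing $\varphi_\lambda(s)=2^{-m_{\mathfrak z}/2}A(s)^{-1/2}\{{\bf c}(\lambda)e^{i\lambda s}+{\bf c}(-\lambda)e^{-i\lambda s}+\mathcal{E}(\lambda,s)\}$, where $\mathcal E$ collects all $\mu\ge 1$ terms from both series, this splits $T_{\psi,2}f$ into three pieces $T_{\psi,2}^{+}f$, $T_{\psi,2}^{-}f$, and $T_{\psi,2}^{\mathrm{err}}f$. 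Since $A(s)^{-1/2}$ is smooth and bounded on $A(R_0,R)$, it may be absorbed.

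For the two oscillatory pieces (say $T_{\psi,2}^{+}$) I would run an abstract $TT^{*}$ argument to reduce the $L^4(A(R_0,R))\to L^4$ bound to an $L^2\to L^2$ bound for the kernel operator whose kernel is, after a change of variable, essentially
\[
K(s,s')=\int_{1}^{\infty} e^{i\lambda(s-s')}\,e^{i(t(s)-t(s'))\psi(\lambda)}\,\eta_2(\lambda)\,|{\bf c}(\lambda)|^{-2}{\bf c}(\lambda)\overline{{\bf c}(\lambda)}\,d\lambda.
\]
To treat this, I would dyadically decompose $\eta_2=\sum_{k\ge 1}\eta(2^{-k}\cdot)$ and on each window $\lambda\sim 2^k$ estimate the joint oscillation afforded by $e^{i\lambda(s-s')}$ and the multiplier $e^{i(t(s)-t(s'))\psi(\lambda)}$. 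Using only the asymptotics \eqref{phase_fn_properties} for $\psi'$ and $\psi''$ together with the derivative estimates \eqref{c-fn_derivative_estimates}, van der Corput (Lemma \ref{van_der_corput}) or Lemma \ref{Sjolin_lemma} on each dyadic window should yield a gain that is summable in $k$. After this dyadic summation, what survives is essentially a one-dimensional Fourier multiplier statement in the variable $s-s'$, and the one-dimensional Pitt inequality (Lemma \ref{Pitt's_ineq}) with $\beta=1/4$, $p=4$ (so $\beta_1=1/2$) is exactly calibrated to convert the resulting weighted $L^2$ bound on the Fourier side into the desired $L^4$ bound in space, matched against the Sobolev norm $\|f\|_{H^{1/4}(S)}$ via Plancherel for the spherical transform. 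The piece $T_{\psi,2}^{-}$ is handled identically by sign symmetry.

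For the error piece $T_{\psi,2}^{\mathrm{err}}f$, I would not need oscillation at all. Because of the extra $(1+|\lambda|)^{-1}$ decay in the coefficient bound \eqref{coefficient_estimate}, the kernel $\sum_{\mu\ge 1}\Gamma_\mu(\pm\lambda)e^{\mp\mu s}$ is, on $s>R_0>0$, a rapidly convergent series whose total size is $O((1+|\lambda|)^{-1}e^{-s})$. Combined with $|{\bf c}(\lambda)|^{-2}\asymp\lambda^{n-1}$ and Cauchy-Schwarz against the $H^\beta$-norm, this extra $\lambda^{-1}$ factor gives a convergent integral for every $\beta\ge 0$, so the error piece is pointwise bounded by $\|f\|_{H^\beta(S)}$ uniformly on $A(R_0,R)$ and is harmless in $L^\infty\subset L^4$.

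The main obstacle I anticipate is the dyadic van der Corput step for the oscillatory pieces: one must carefully balance the contributions of $\lambda$ and $\psi(\lambda)$ on a window $\lambda\sim 2^k$ so that the resulting bound decays geometrically in $k$ and matches the Pitt weight $|\xi|^{-1/2}$ after summation. This is where the explicit asymptotics \eqref{phase_fn_properties} (which are uniform across all six equations of interest) are crucial and where the technical heart of the argument lies.
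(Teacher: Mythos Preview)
Your proposal is correct and follows essentially the same route as the paper: the three-piece decomposition of $T_{\psi,2}$ via the series expansion \eqref{anker_series_expansion}, pointwise control of the error piece using the extra decay \eqref{coefficient_estimate}, and for the two oscillatory pieces a $TT^*$/duality argument combined with a dyadic van der Corput/integration-by-parts estimate that sums to the $|s-s'|^{-1/2}$ fractional-integration kernel, then closed by Pitt's inequality. The only cosmetic discrepancy is that the paper applies Pitt on the dual side (with exponent $p=4/3$ and $\beta_1=0$, applied to $h\in L^{4/3}$) rather than directly with $p=4$; both choices are admissible in Lemma~\ref{Pitt's_ineq} and lead to the same endpoint bound.
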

\begin{remark}
In view of Lemma \ref{large_freq_lemma_annulus}, we note that for the range $1/4 \le \beta \le 1/2$, the largest value of $p$ in Lemma \ref{large_freq_ball} is given by $2n/(n-1)$, which satisfies 
\begin{equation*}
\frac{2n}{n-1} \le 4\:,\: \text{ as } n \ge 2\:.
\end{equation*}
Thus the boundedness of $T_{\psi,2}$ on $B_R$ is essentially determined by Lemma \ref{large_freq_ball}, that is, on the small ball $\overline{B_{R_0}}$.
\end{remark}

To prove Lemma \ref{large_freq_lemma_annulus}, we invoke the  series expansion (\ref{anker_series_expansion}) of $\varphi_\lambda$. For $R_0 < s < R$ and $\lambda \ge 1$, using the series expansion  (\ref{anker_series_expansion}) and the estimate (\ref{coefficient_estimate}) on the coefficients $\Gamma_\mu$, we get
\begin{equation} \label{annulus_pf_eq1}
\varphi_\lambda (s) = 2^{-m_\z/2} {A(s)}^{-1/2} \left\{{\bf c}(\lambda)  e^{i \lambda s} + {\bf c}(-\lambda) e^{-i\lambda  s}\right\} + E_2(\lambda,s)\:,
\end{equation}
where
\begin{equation*}
E_2(\lambda,s) = 2^{-m_\z/2} {A(s)}^{-1/2} \left\{{\bf c}(\lambda)  \displaystyle\sum_{\mu=1}^\infty \Gamma_\mu(\lambda) e^{(i \lambda-\mu) s} + {\bf c}(-\lambda) \displaystyle\sum_{\mu=1}^\infty \Gamma_\mu(-\lambda) e^{-(i\lambda + \mu) s}\right\} \:,
\end{equation*}
and thus
\begin{equation} \label{annulus_pf_eq2}
\left|E_2(\lambda,s)\right| \lesssim {A(s)}^{-1/2} \left|{\bf c}(\lambda)\right| {(1+\lambda)}^{-1} \:.
\end{equation}
Hence for $R_0 < s < R$ and $\lambda \ge 1$, invoking the decomposition (\ref{annulus_pf_eq1}), we decompose $T_{\psi,2}$ as,
\begin{equation*}
T_{\psi,2}f(s)=T_{\psi,5}f(s)+T_{\psi,6}f(s)+T_{\psi,7}f(s),
\end{equation*}
where
\begin{eqnarray*}
T_{\psi,5}f(s)&=& 2^{-m_\z/2} {A(s)}^{-1/2} \bigintssss_{1}^\infty  {\bf c}(\lambda) \: e^{i\left\{\lambda s + t(s)\psi(\lambda)\right\}} \:\widehat{f}(\lambda)\: \eta_2(\lambda)\: {|{\bf c}(\lambda)|}^{-2}\: d\lambda\:, \\
T_{\psi,6}f(s)&=& 2^{-m_\z/2} {A(s)}^{-1/2} \bigintssss_{1}^\infty  {\bf c}(-\lambda) \: e^{i\left\{-\lambda s + t(s)\psi(\lambda)\right\}} \:\widehat{f}(\lambda)\: \eta_2(\lambda)\: {|{\bf c}(\lambda)|}^{-2}\: d\lambda\:, \\
T_{\psi,7}f(s)&=& \bigintssss_1^\infty E_2(\lambda,s)\: e^{it(s)\psi(\lambda)} \:\widehat{f}(\lambda)\: \eta_2(\lambda)\: {|{\bf c}(\lambda)|}^{-2}\: d\lambda\:.
\end{eqnarray*}
We now note that to prove Lemma \ref{large_freq_lemma_annulus}, it suffices to prove the following inequalities at the end-points:
\begin{lemma} \label{large_freq_lemma_annulus_pieces}
\begin{itemize}
\item[(a)] For $j=5,6$, we have 
\begin{equation*}
{\|T_{\psi,j}f\|}_{L^4\left(A(R_0,R)\right)} \lesssim {\|f\|}_{H^{1/4}(S)}\:.
\end{equation*}
\item[(b)] For $T_{\psi,7}$, we have
\begin{equation*}
{\|T_{\psi,7}f\|}_{L^\infty\left(A(R_0,R)\right)} \lesssim {\|f\|}_{H^0(S)}\:.
\end{equation*}
\end{itemize}
\end{lemma}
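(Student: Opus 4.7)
The bound on $T_{\psi,7}$ is immediate. Using the pointwise estimate (\ref{annulus_pf_eq2}) on $E_2$ and the boundedness of $A(s)^{-1/2}$ on the fixed annulus $A(R_0,R)$, the Cauchy-Schwarz inequality in $\lambda$ yields
\begin{equation*}
|T_{\psi,7}f(s)| \lesssim \int_1^\infty |{\bf c}(\lambda)|^{-1}(1+\lambda)^{-1}|\widehat f(\lambda)|\,d\lambda \le \|f\|_{H^0(S)}\left(\int_1^\infty (1+\lambda)^{-2}\,d\lambda\right)^{1/2}\:,
\end{equation*}
which is finite. The crucial gain is the extra $(1+\lambda)^{-1}$ factor coming from the coefficient estimate (\ref{coefficient_estimate}).

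\textbf{Part (a), setup.} We treat $T_{\psi,5}$; the argument for $T_{\psi,6}$ is identical after replacing $\lambda$ by $-\lambda$ in the linear term of the phase. By $L^4$--$L^{4/3}$ duality, it suffices to bound $|\langle T_{\psi,5}f,F\rangle_{L^2(A(R_0,R))}|$ by $\|f\|_{H^{1/4}(S)}\|F\|_{L^{4/3}(A(R_0,R))}$ for every $F\in L^{4/3}(A(R_0,R))$. Interchanging the order of integration and applying a weighted Cauchy-Schwarz in $\lambda$ (distributing the weight $(\lambda^2+Q^2/4)^{1/4}$ equally between the two factors) reduces the problem to showing
\begin{equation*}
\int_1^\infty \lambda^{-1/2}|G(\lambda)|^2\,d\lambda\lesssim \|F\|_{L^{4/3}(A)}^2\:,\quad G(\lambda):=\int_A e^{i\{\lambda s+t(s)\psi(\lambda)\}}\overline{F(s)}A(s)^{-1/2}\,ds\:.
\end{equation*}
Expanding $|G|^2$ and interchanging integrals once more, the left-hand side becomes a bilinear form in $F$ with kernel (up to the bounded factor $A(s)^{-1/2}A(r)^{-1/2}$) given by
\begin{equation*}
\Phi(s,r):=\int_1^\infty \lambda^{-1/2}\,\eta_2(\lambda)^2\,e^{i\{\lambda(s-r)+(t(s)-t(r))\psi(\lambda)\}}\,d\lambda\:.
\end{equation*}

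\textbf{The kernel bound (main obstacle) and conclusion.} The heart of the proof is to establish the uniform estimate $|\Phi(s,r)|\lesssim|s-r|^{-1/2}$, independently of the measurable function $t$, using only the large-frequency asymptotics $|\psi'(\lambda)|\asymp\lambda^{\delta_2-1}$ and $|\psi''(\lambda)|\asymp\lambda^{\delta_2-2}$ from (\ref{phase_fn_properties}). The plan is to decompose $\eta_2=\sum_{k\ge 1}\eta(2^{-k}\cdot)$ and analyze each dyadic piece $\Phi_k$ via the phase $\phi_\tau(\lambda):=\lambda(s-r)+\tau\psi(\lambda)$ with $\tau:=t(s)-t(r)\in[-1,1]$. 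A putative stationary point $\lambda_*\asymp(|s-r|/|\tau|)^{1/(\delta_2-1)}$ lies in at most one dyadic window $k=k_*$; on that window, van der Corput (Lemma \ref{van_der_corput}) with $|\phi_\tau''|\asymp|\tau|2^{k(\delta_2-2)}$ gives $|\Phi_{k_*}|\lesssim|\tau|^{-1/2}2^{-k_*(\delta_2-1)/2}$, and substituting $2^{k_*(\delta_2-1)}\sim|s-r|/|\tau|$ converts this to exactly $|s-r|^{-1/2}$. On the remaining non-stationary windows one has $|\phi_\tau'(\lambda)|\gtrsim\max(|s-r|,|\tau|\lambda^{\delta_2-1})$, and repeated integration by parts via Lemma \ref{Sjolin_lemma} produces decay in $|k-k_*|$ sufficiently rapid for the sum also to be controlled by $|s-r|^{-1/2}$. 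The technical delicacy, and the main obstacle, is arranging this case analysis so that it survives uniformly over all admissible $\tau$ on the basis of only the asymptotic data on $\psi$ (and in particular without using any explicit form of $\psi$, which is what permits simultaneous treatment of all the equations of interest). Once $|\Phi(s,r)|\lesssim|s-r|^{-1/2}$ is in hand, the bilinear form is dominated by $\iint_A|F(s)||F(r)||s-r|^{-1/2}\,ds\,dr$; extending $F$ by zero to $\R$ and invoking Plancherel, this equals (up to a constant) $\int_\R|\widetilde F(\xi)|^2|\xi|^{-1/2}\,d\xi$, which the one-dimensional Pitt inequality (Lemma \ref{Pitt's_ineq}) with $p=4/3$, $\beta=1/4$ (so that $\beta_1=0$) bounds by $\|F\|_{L^{4/3}(\R)}^2=\|F\|_{L^{4/3}(A)}^2$, completing the proof of part (a).
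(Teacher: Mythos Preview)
Your overall architecture matches the paper's: $TT^*$/duality, dyadic decomposition of $\eta_2$, the kernel bound $|\Phi(s,r)|\lesssim|s-r|^{-1/2}$, and closing with the Riesz potential identity plus Pitt's inequality (Lemma~\ref{Pitt's_ineq}). Part (b) is fine and identical to the paper's argument.

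The gap is in your sketch of the kernel bound. Your proposed two-tool scheme --- van der Corput on the single stationary window $k_*$ and integration by parts on all others --- does not close. In the regime $|\tau|\gtrsim|s-r|^{\delta_2}$ (the paper's Case~1) one has $2^{k_*}\lesssim|s-r|^{-1}$, and for $k$ near or above $k_*$ a single IBP yields $|\Phi_k|\lesssim 2^{k/2}(|\tau|2^{k\delta_2})^{-1}$; summing over $k>k_*$ gives a bound of order $|s-r|^{-1}2^{-k_*/2}$, which exceeds $|s-r|^{-1/2}$ precisely when $2^{k_*}<|s-r|^{-1}$. Repeated IBP does not help here: the generic $l$-fold term is of size $2^{k/2}(|\tau|2^{k\delta_2})^{-l}$, and after summation this is $\lesssim|s-r|^{-1/2}$ only when $|\tau|\lesssim|s-r|^{\delta_2}$. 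Similarly, for $2^k\le|s-r|^{-1}$ integration by parts gives nothing useful (the phase derivative bound $|\theta'|\gtrsim 2^k|s-r|$ is below $1$), and one must fall back on the trivial bound $|\Phi_k|\lesssim 2^{k/2}$. The paper's proof therefore uses \emph{three} tools rather than two: trivial bound, van der Corput (applied on \emph{all} sufficiently high windows, not just $k_*$), and IBP, distributed across a three-case analysis in $|s-r|$ versus $|\tau|^{1/\delta_2}$. This is exactly the ``technical delicacy'' you flag, but your stated plan needs this correction to go through.

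Two smaller points: in your definition of $G$ the weight should be $A(s)^{1/2}$ (from the Haar measure in the $L^4$--$L^{4/3}$ pairing), not $A(s)^{-1/2}$; and in the final step one applies the Riesz identity and Pitt's inequality to $|F|$ (or rather its even extension), not to $F$ itself. Both are harmless on the fixed annulus but should be stated correctly. Also note that (\ref{phase_fn_properties}) only gives $|\psi'|\lesssim\lambda^{\delta_2-1}$, not the two-sided estimate you invoke, so the organization via a unique $\lambda_*$ is heuristic; the paper's case split avoids relying on this.
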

\begin{proof}
We first prove part $(a)$. The arguments for $T_{\psi,5}$ and $T_{\psi,6}$ being exactly similar, we focus only on $T_{\psi,5}$. The crux of the matter here is to utilize the oscillation afforded by both the spherical functions and the mutiplier. We need to prove the inequality
\begin{equation} \label{annulus_pf_eq3}
{\left(\int_{R_0}^R {\left|T_{\psi,5}f(s)\right|}^4\:A(s)\:ds\right)}^{1/4} \lesssim {\left(\int_0^\infty {\left(\lambda^2 + \frac{Q^2}{4}\right)}^{1/4} {|\widehat{f}(\lambda)|}^2\: {|{\bf c}(\lambda)|}^{-2} \:d\lambda\right)}^{1/2}\:.
\end{equation}
From the definition of $T_{\psi,5}$, it follows that
\begin{eqnarray*}
&&T_{\psi,5}f(s) A(s)^{1/4}\\
&=&  2^{-m_\z/2} {A(s)}^{-1/4} \bigintssss_{1}^\infty  {\bf c}(\lambda) \: e^{i\left\{\lambda s + t(s)\psi(\lambda)\right\}} \:\widehat{f}(\lambda)\: \eta_2(\lambda)\: {|{\bf c}(\lambda)|}^{-2}\: d\lambda \\
&=&  2^{-m_\z/2} {A(s)}^{-1/4} \bigintssss_{1}^\infty  {\bf c}(\lambda) \: e^{i\left\{\lambda s + t(s)\psi(\lambda)\right\}} \:g(\lambda)\: \eta_2(\lambda)\: {\left(\lambda^2 + \frac{Q^2}{4}\right)}^{-1/8} \:{|{\bf c}(\lambda)|}^{-1}\: d\lambda \:,
\end{eqnarray*}
where
\begin{equation*}
g(\lambda)= \widehat{f}(\lambda)\: {\left(\lambda^2 + \frac{Q^2}{4}\right)}^{1/8}\:{|{\bf c}(\lambda)|}^{-1}\:.
\end{equation*}
Next for $s \in (R_0, R)$, defining
\begin{equation*}
Pg(s):= {A(s)}^{-\frac{1}{4}} \bigintssss_{1}^\infty  {\bf c}(\lambda) \: e^{i\left\{\lambda s + t(s)\psi(\lambda)\right\}} \:g(\lambda)\: \eta_2(\lambda)\: {\left(\lambda^2 + \frac{Q^2}{4}\right)}^{-1/8} \:{|{\bf c}(\lambda)|}^{-1}\: d\lambda \:,
\end{equation*}
we note that,
\begin{equation*}
2^{m_\z/2}\: T_{\psi,5}f(s) A(s)^{1/4} = Pg(s)\:.
\end{equation*}
Thus in order to prove (\ref{annulus_pf_eq3}), it suffices to prove
\begin{equation} \label{annulus_pf_eq4}
{\left(\int_{R_0}^R {\left|Pg(s)\right|}^4\:ds\right)}^{1/4} \lesssim {\left(\int_0^\infty  {|g(\lambda)|}^2 \:d\lambda\right)}^{1/2}\:.
\end{equation}
Now for $u \in C_c^\infty(R_0,R)$ and $\lambda >0$, setting
\begin{equation*}
P^*u(\lambda)= \overline{{\bf c}(\lambda)}\:\eta_2(\lambda)\:{\left(\lambda^2 + \frac{Q^2}{4}\right)}^{-\frac{1}{8}}{|{\bf c}(\lambda)|}^{-1} \int_{R_0}^R {A(s)}^{-1/4}\: e^{-i\left\{\lambda s + t(s)\psi(\lambda)\right\}} u(s)\: ds\:,
\end{equation*}
it is easy to observe that 
\begin{equation*}
\int_{R_0}^R Pv(s) \: \overline{u(s)}\: ds = \int_{0}^\infty v(\lambda)\: \overline{P^*u(\lambda)}\: d\lambda \:,
\end{equation*}
holds for all $u \in C_c^\infty(R_0,R)$ and $v \in L^2(0,\:\infty)$ having suitable decay at infinity. Using duality  and the fact that $Supp(\eta_2) \subset \R \setminus (-1,1)$, it follows that to prove (\ref{annulus_pf_eq4}), it suffices to prove 
\begin{equation} \label{annulus_pf_eq5}
{\left(\int_1^\infty {\left|P^* h(\lambda)\right|}^2\: d\lambda\right)}^{1/2} \lesssim {\left(\int_{R_0}^R {\left|h(s)\right|}^{4/3}\:ds\right)}^{3/4}\:,
\end{equation}
for all $h \in C_c^\infty(R_0,R)$. Now expanding $\eta_2$, we get that
\begin{eqnarray*}
|P^*h(\lambda)| &=& \eta_2(\lambda)\:{\left(\lambda^2 + \frac{Q^2}{4}\right)}^{-\frac{1}{8}} \left| \int_{R_0}^R {A(s)}^{-1/4}\: e^{-i\left\{\lambda s + t(s)\psi(\lambda)\right\}} h(s)\: ds \right| \\
&=& \left(\displaystyle\sum_{k=1}^\infty \eta(2^{-k}\lambda)\:{\left(\lambda^2 + \frac{Q^2}{4}\right)}^{-\frac{1}{8}}\right) \left| \int_{R_0}^R {A(s)}^{-1/4}\: e^{-i\left\{\lambda s + t(s)\psi(\lambda)\right\}} h(s)\: ds \right| \\
& \lesssim & \left(\displaystyle\sum_{k=1}^\infty \eta(2^{-k}\lambda)\:2^{-k/4}\right) \left| \int_{R_0}^R {A(s)}^{-1/4}\: e^{-i\left\{\lambda s + t(s)\psi(\lambda)\right\}} h(s)\: ds \right| \:.
\end{eqnarray*}
For $N \in \N$, we set 
\begin{equation*}
P^*_N h(\lambda)= \left(\displaystyle\sum_{k=1}^N \eta(2^{-k}\lambda)\:2^{-k/4}\right) \left| \int_{R_0}^R {A(s)}^{-1/4}\: e^{-i\left\{\lambda s + t(s)\psi(\lambda)\right\}} h(s)\: ds \right| \:.
\end{equation*}
Then from the definition of $\eta$, it follows that
\begin{equation*}
{\left|P^*_N h(\lambda)\right|}^2 \le 3 \left(\displaystyle\sum_{k=1}^N \eta(2^{-k}\lambda)\:2^{-k/2}\right) {\left| \int_{R_0}^R {A(s)}^{-1/4}\: e^{-i\left\{\lambda s + t(s)\psi(\lambda)\right\}} h(s)\: ds \right|}^2 \:.
\end{equation*}
Then by Fubini's theorem and a change of variable,
\begin{eqnarray*}
&&\int_1^\infty {\left|P^*_N h(\lambda)\right|}^2\: d\lambda \\
& \lesssim & \int_1^\infty \left(\displaystyle\sum_{k=1}^N \eta(2^{-k}\lambda)\:2^{-k/2}\right) \left( \int_{R_0}^R {A(s)}^{-1/4}\: e^{-i\left\{\lambda s + t(s)\psi(\lambda)\right\}} h(s)\: ds \right) \\
&& \times  \left( \int_{R_0}^R {A(s')}^{-1/4}\: e^{i\left\{\lambda s' + t(s')\psi(\lambda)\right\}} \:\overline{h(s')}\: ds' \right) d\lambda \\
&=& \int_{R_0}^R \int_{R_0}^R \left(\displaystyle\sum_{k=1}^N 2^{-k/2} \int_1^\infty e^{i\left\{\lambda (s'-s) + (t(s')-t(s))\psi(\lambda)\right\}} \eta(2^{-k}\lambda)\: d\lambda  \right) \\
&& \times {A(s)}^{-1/4} \: h(s) \: {A(s')}^{-1/4} \: \overline{h(s')} \: ds \: ds' \\
&=& \int_{R_0}^R \int_{R_0}^R \left(\displaystyle\sum_{k=1}^N 2^{k/2} \int_{1/2}^2 e^{i\left\{2^k\lambda (s'-s) + (t(s')-t(s))\psi(2^k\lambda)\right\}} \eta(\lambda)\: d\lambda  \right) \\
&& \times {A(s)}^{-1/4} \: h(s) \: {A(s')}^{-1/4} \: \overline{h(s')} \: ds \: ds'\:.
\end{eqnarray*}
Now letting $N \to \infty$ and using the growth asymptotic of the density function (as $R_0 < s,s' < R$), we get that 
\begin{equation} \label{annulus_pf_eq6}
\int_1^\infty {\left|P^* h(\lambda)\right|}^2\: d\lambda \lesssim \int_{R_0}^R \int_{R_0}^R \left(\displaystyle\sum_{k=1}^\infty I_k(s,s')\right) \:|h(s)|\:|h(s')|\:s^{-\left(\frac{n-1}{4}\right)}\:{(s')}^{-\left(\frac{n-1}{4}\right)}\:ds\:ds'\:, 
\end{equation}
where for $k \ge 1$,
\begin{equation*}
I_k(s,s') = 2^{k/2} \left|\int_{1/2}^2 e^{i\left\{2^k\lambda (s'-s) + (t(s')-t(s))\psi(2^k\lambda)\right\}} \eta(\lambda)\: d\lambda \right|\:.
\end{equation*}
We now claim that
\begin{equation} \label{oscillatory_claim}
\displaystyle\sum_{k=1}^\infty I_k(s,s') \lesssim \frac{1}{{|s-s'|}^{1/2}}\:.
\end{equation}

In order to prove (\ref{oscillatory_claim}), we note that for all $k \ge 1$, one trivially has,
\begin{equation} \label{claim_pf_eq1}
I_k(s,s') \lesssim 2^{k/2}\:,
\end{equation}
where the implicit positive constant only depends on $\eta$. In the forthcoming coming computations, unless explicitly mentioned all constants are independent of $k$. For the sake of notational convenience, we write 
\begin{equation*}
d=t(s')-t(s)\:.
\end{equation*}
Without loss of generality, we will assume that $0<d<1$.

\medskip

We next recall by (\ref{phase_fn_properties}) that there exist real numbers $C_1,C_2,C_3>0$ and $\delta_2>1$ such that the first derivative of the phase function $\psi$ (for the equations we are interested in) satisfies
\begin{equation} \label{claim_pf_eq2}
 |\psi'(\lambda)| \le C_1 \lambda^{\delta_2 -1}\:,\text{  for } \lambda \ge 1  \:,
\end{equation}
and for the second derivative, one has
\begin{equation} \label{claim_pf_eq3}
C_2 \lambda^{\delta_2 -2} \le |\psi''(\lambda)| \le C_3 \lambda^{\delta_2 -2}\:,\text{  for } \lambda \ge 1  \:.
\end{equation}
We also set 
\begin{equation} \label{claim_pf_eq4}
C_4 = \displaystyle\max_{1/2 \le \lambda \le 2} \left\{\lambda^{\delta_2-1}\right\}\: \text{ and } C_5 = \frac{1}{2\max\{C_1C_4\:,2\}}\:.
\end{equation}
We first obtain the following auxiliary estimates on $I_k$,
\begin{equation} \label{claim_pf_eq5}
I_k(s,s') \lesssim 2^{k/2}\:{\left(d2^{k\delta_2}\right)}^{-1/2}\:,\:\text{ for all } k \ge 1\:,
\end{equation}
and
\begin{equation} \label{claim_pf_eq6}
I_k(s,s') \lesssim 2^{-k/2}\:{|s-s'|}^{-1}\:,\:\text{ for } d2^{k\delta_2} \le C_5\: 2^k |s-s'|\:.
\end{equation}

\medskip

We denote 
\begin{equation*}
\theta(\lambda)=2^k\lambda (s'-s) + d\psi(2^k\lambda)\:,
\end{equation*}
and then $I_k$ takes the form,
\begin{equation*}
I_k(s,s') = 2^{k/2} \left|\int_{1/2}^2 e^{i\theta(\lambda)} \eta(\lambda)\: d\lambda \right|\:.
\end{equation*}
By taking derivatives, we see that
\begin{eqnarray*}
&& \theta'(\lambda)= 2^k (s'-s) + 2^kd\:\psi'(2^k\lambda)\:, \\ 
&& \theta''(\lambda)= 2^{2k}d\:\psi''(2^k\lambda)\:.
\end{eqnarray*}
We first prove (\ref{claim_pf_eq5}). Now as $1/2 <\lambda < 2$, for $k \ge 1$, one has that $2^k \lambda >1$. Then using (\ref{claim_pf_eq3}), it follows that
\begin{equation*}
\left|\theta''(\lambda)\right| = 2^{2k}d\:\left|\psi''(2^k\lambda)\right| \ge C_2\: 2^{2k} d\:{(2^k \lambda)}^{\delta_2-2} \ge C\:d\:2^{k\delta_2}\:. 
\end{equation*}
Using the above and Lemma \ref{van_der_corput}, we obtain (\ref{claim_pf_eq5}).

\medskip

We now focus on (\ref{claim_pf_eq6}). By (\ref{claim_pf_eq2}) and (\ref{claim_pf_eq4}), we have
\begin{equation*}
|\psi'(2^k\lambda)| \le C_1 {\left(2^k\lambda\right)}^{\delta_2-1} \le C_1C_4\:2^{k(\delta_2-1)}\:,
\end{equation*}
and thus it follows that
\begin{equation*} 
2^k d \:|\psi'(2^k\lambda)| \le C_1C_4\: d\: 2^{k \delta_2}\:.
\end{equation*}
Then as we are working under the assumption that $d2^{k\delta_2} \le C_5\: 2^k |s-s'|$, we get that
\begin{equation*} 
2^k d \:|\psi'(2^k\lambda)| \le C_1C_4\: C_5\: 2^k |s-s'| \le \frac{1}{2} 2^k |s-s'|\:.
\end{equation*}
Hence, 
\begin{equation} \label{claim_pf_eq7}
|\theta'(\lambda)| \ge 2^k|s-s'| - \frac{1}{2} 2^k |s-s'| \ge \frac{1}{2} 2^k |s-s'|\:. 
\end{equation}
Also by (\ref{claim_pf_eq3}), we have
\begin{equation} \label{claim_pf_eq8}
\left|\theta''(\lambda)\right| = 2^{2k}d\:\left|\psi''(2^k\lambda)\right| \le C_3\: 2^{2k} d\:{(2^k \lambda)}^{\delta_2-2} \le C\:d\:2^{k\delta_2}\:.
\end{equation}
Now by Lemma \ref{Sjolin_lemma} (for $l=1$ in the statement),
\begin{equation*}
\int_{1/2}^2 e^{i\theta(\lambda)}\:\eta(\lambda)\:d\lambda = i \int_{1/2}^2 e^{i\theta(\lambda)}\: \left[\frac{\eta'(\lambda)}{\theta'(\lambda)}-\frac{\eta(\lambda)\theta''(\lambda)}{{\left(\theta'(\lambda)\right)}^2}\right]\: d\lambda\:.
\end{equation*}
Thus plugging (\ref{claim_pf_eq7}), (\ref{claim_pf_eq8}) and the underlying assumption in the above, we obtain
\begin{eqnarray*}
|I_k(s,s')| & \le & 2^{k/2} \left[\int_{1/2}^2 \frac{|\eta'(\lambda)|}{|\theta'(\lambda)|}\:d\lambda + \int_{1/2}^2 \frac{|\eta(\lambda)|\:|\theta''(\lambda)|}{{|\theta'(\lambda)|}^2}\: d\lambda\right] \\
& \lesssim & 2^{k/2} \left[{\left(2^k |s-s'|\right)}^{-1}+ {\left(2^k |s-s'|\right)}^{-2}\:d\:2^{k\delta_2}\right] \\
& \lesssim & 2^{-k/2}\:{|s-s'|}^{-1}\:. 
\end{eqnarray*}
This completes the proof of (\ref{claim_pf_eq6}).

\medskip

We now focus on the proof of the claim (\ref{oscillatory_claim}). Set
\begin{equation*}
C_6=C_5^{1/(\delta_2-1)}\:.
\end{equation*}
We break the proof into three cases: 

\medskip

\noindent {\bf Case 1:}\:$|s-s'| \le d^{1/\delta_2}/{C_6}$. \\
We break this case into further two subcases. First we consider the set 
\begin{equation*}
\mathscr{K}_1:=\left\{k \ge 1 \mid 2^k \le d^{-1/\delta_2}\right\}\:.
\end{equation*}
By (\ref{claim_pf_eq1}), we have
\begin{equation*}
\displaystyle\sum_{k \in \mathscr{K}_1} I_k(s,s') \lesssim \displaystyle\sum_{k \in \mathscr{K}_1} 2^{k/2} \lesssim {\left(d^{-1/\delta_2}\right)}^{1/2} \lesssim \frac{1}{{|s-s'|}^{1/2}}\:.
\end{equation*}
We now consider the complement, that is,
\begin{equation*}
\mathscr{K}_2:=\left\{k \ge 1 \mid 2^k > d^{-1/\delta_2}\right\}\:.
\end{equation*}
By (\ref{claim_pf_eq5}), it follows that
\begin{equation*}
\displaystyle\sum_{k \in \mathscr{K}_2} I_k(s,s') \lesssim  \displaystyle\sum_{k \in \mathscr{K}_2} 2^{k/2}\:{\left(d2^{k\delta_2}\right)}^{-1/2} \lesssim   d^{-1/2}\: {\left(d^{-1/\delta_2}\right)}^{\left(\frac{1}{2}-\frac{\delta_2}{2}\right)} \lesssim  \frac{1}{{|s-s'|}^{1/2}}\:.
\end{equation*}
Thus summing up both the subcases,
\begin{equation*}
\displaystyle\sum_{k=1}^\infty I_k(s,s') = \displaystyle\sum_{k \in \mathscr{K}_1} I_k(s,s') + \displaystyle\sum_{k \in \mathscr{K}_2} I_k(s,s') \lesssim \frac{1}{{|s-s'|}^{1/2}}\:.
\end{equation*}

\noindent {\bf Case 2:}\:$d^{1/\delta_2}/{C_6} \le |s-s'| <1$. \\
We start off by noting that in this case,
\begin{equation*}
\frac{1}{|s-s'|} \le C_6\:d^{-1/\delta_2} \le C_6\:{\left(\frac{|s-s'|}{d}\right)}^{1/(\delta_2-1)} \:.
\end{equation*}
We now break this case into further three subcases. We first consider the collection
\begin{equation*}
\mathscr{K}_3 := \left\{k \ge 1 \mid 2^k \le \frac{1}{|s-s'|}\right\}\:.
\end{equation*}
By (\ref{claim_pf_eq1}), we have
\begin{equation*}
\displaystyle\sum_{k \in \mathscr{K}_3} I_k(s,s') \lesssim \displaystyle\sum_{k \in \mathscr{K}_3} 2^{k/2} \lesssim \frac{1}{{|s-s'|}^{1/2}}\:.  
\end{equation*}
Next, let us consider
\begin{equation*}
\mathscr{K}_4 := \left\{k \ge 1 \mid \frac{1}{|s-s'|}< 2^k \le C_6\:{\left(\frac{|s-s'|}{d}\right)}^{1/(\delta_2-1)}\right\}\:.
\end{equation*}
As $\delta_2 >1$, we note that for $k \in \mathscr{K}_4$, we also have
\begin{equation*}
d2^{k\delta_2} \le C_5\: 2^k |s-s'|\:.
\end{equation*}
Hence, by (\ref{claim_pf_eq6}), it follows that
\begin{equation*}
\displaystyle\sum_{k \in \mathscr{K}_4} I_k(s,s') \lesssim {|s-s'|}^{-1} \displaystyle\sum_{k \in \mathscr{K}_4} 2^{-k/2} \lesssim \frac{1}{{|s-s'|}^{1/2}}\:.  
\end{equation*}
Finally, we consider 
\begin{equation*}
\mathscr{K}_5 := \left\{k \ge 1 \mid 2^k > C_6\:{\left(\frac{|s-s'|}{d}\right)}^{1/(\delta_2-1)}\right\}\:.
\end{equation*}
By (\ref{claim_pf_eq5}), we get
\begin{equation*}
\displaystyle\sum_{k \in \mathscr{K}_5} I_k(s,s') \lesssim  d^{-1/2}\displaystyle\sum_{k \in \mathscr{K}_5} 2^{k\left(\frac{1}{2}-\frac{\delta_2}{2}\right)} \lesssim d^{-1/2} {\left(\frac{|s-s'|}{d}\right)}^{\left(\frac{1}{2}-\frac{\delta_2}{2}\right)/(\delta_2-1)}=\frac{1}{{|s-s'|}^{1/2}}\:.
\end{equation*}
Thus summing up the above three subcases, we obtain
\begin{equation*}
\displaystyle\sum_{k=1}^\infty I_k(s,s') = \displaystyle\sum_{k \in \mathscr{K}_3} I_k(s,s') + \displaystyle\sum_{k \in \mathscr{K}_4} I_k(s,s') + \displaystyle\sum_{k \in \mathscr{K}_5} I_k(s,s') \lesssim \frac{1}{{|s-s'|}^{1/2}}\:.
\end{equation*}
\noindent {\bf Case 3:}\:$|s-s'| \ge 1$. \\
We first consider the collection,
\begin{equation*}
\mathscr{K}_6 := \left\{k \ge 1 \mid 2^k \le C_6\:{\left(\frac{|s-s'|}{d}\right)}^{1/(\delta_2-1)}\right\}\:.
\end{equation*}
As $\delta_2 >1$, we note that for $k \in \mathscr{K}_6$, we also have
\begin{equation*}
d2^{k\delta_2} \le C_5\: 2^k |s-s'|\:.
\end{equation*}
Hence, by (\ref{claim_pf_eq6}), it follows that
\begin{equation*}
\displaystyle\sum_{k \in \mathscr{K}_6} I_k(s,s') \lesssim {|s-s'|}^{-1} \displaystyle\sum_{k \in \mathscr{K}_6} 2^{-k/2} \lesssim \frac{1}{{|s-s'|}^{1/2}}\:.  
\end{equation*}
So the only case left is when $k \in \mathscr{K}_5$, defined above. By (\ref{claim_pf_eq5}), we get
\begin{equation*}
\displaystyle\sum_{k \in \mathscr{K}_5} I_k(s,s') \lesssim  d^{-1/2}\displaystyle\sum_{k \in \mathscr{K}_5} 2^{k\left(\frac{1}{2}-\frac{\delta_2}{2}\right)} \lesssim d^{-1/2} {\left(\frac{|s-s'|}{d}\right)}^{\left(\frac{1}{2}-\frac{\delta_2}{2}\right)/(\delta_2-1)}=\frac{1}{{|s-s'|}^{1/2}}\:.
\end{equation*}
Again summing up, we obtain
\begin{equation*}
\displaystyle\sum_{k =1}^\infty I_k(s,s')= \displaystyle\sum_{k \in \mathscr{K}_5} I_k(s,s') + \displaystyle\sum_{k \in \mathscr{K}_6} I_k(s,s') \lesssim  \frac{1}{{|s-s'|}^{1/2}}\:.
\end{equation*}
This completes the proof of the claim (\ref{oscillatory_claim}).

\medskip

Then by (\ref{annulus_pf_eq6}) and (\ref{oscillatory_claim}), it follows that
\begin{equation*}
\int_{1}^\infty {\left|P^* h(\lambda)\right|}^2\: d\lambda \lesssim \int_{R_0}^R \int_{R_0}^R \frac{|h(s)|\:|h(s')|}{{|s-s'|}^{\frac{1}{2}}} \:s^{-\left(\frac{n-1}{4}\right)}\:{(s')}^{-\left(\frac{n-1}{4}\right)}\:ds\:ds' \:.
\end{equation*}
As $h \in C^\infty_c(R_0,R)$, identifying it as an even $C^\infty_c$ function supported in $(-R,-R_0) \sqcup (R_0,R)$, we write the last integral as an one dimensional Riesz potential and apply (\ref{riesz_identity}) to get that for some $c>0$,
\begin{eqnarray*}
&& \int_{R_0}^R \int_{R_0}^R \frac{|h(s)||h(s')|}{{|s-s'|}^{1/2}}\:s^{-\left(\frac{n-1}{4}\right)}\:{(s')}^{-\left(\frac{n-1}{4}\right)}\:ds\:ds' \\
&=& \int_{0}^\infty \int_{0}^\infty \frac{|h(s)||h(s')|}{{|s-s'|}^{1/2}}\:s^{-\left(\frac{n-1}{4}\right)}\:{(s')}^{-\left(\frac{n-1}{4}\right)}\:ds\:ds' \\
&=& c\int_{0}^\infty I_{1/2} \left({(s')}^{-\left(\frac{n-1}{4}\right)} |h|\right)(s)\:s^{-\left(\frac{n-1}{4}\right)}\:|h(s)|\:ds \\
&=& c \int_{\R} {|\xi|}^{-\frac{1}{2}}\: {\left|{\left(s^{-\left(\frac{n-1}{4}\right)}|h|\right)}^{\sim}(\xi)\right|}^2\: d\xi \:.
\end{eqnarray*}
Now applying Pitt's inequality (see Lemma \ref{Pitt's_ineq}) to the function
\begin{equation*}
x\mapsto |x|^{-\frac{n-1}{4}}|h(x)|,\:\:\:\:\:\:x\in\R,
\end{equation*}
for $\beta=\frac{1}{4}$, $p=\frac{4}{3}$ and hence, $\beta_1=0$, we obtain
\begin{eqnarray*}
 \int_{\R} {|\xi|}^{-\frac{1}{2}}\: {\left|{\left(s^{-\left(\frac{n-1}{4}\right)}|h|\right)}^{\sim}(\xi)\right|}^2\: d\xi
& \lesssim & {\left(\int_{\R} {|h(x)|}^{\frac{4}{3}}\: {|x|}^{-\left(\frac{n-1}{3}\right)}\: dx\right)}^{3/2} \\
&=& c {\left(\int_{R_0}^R {|h(s)|}^{\frac{4}{3}}\: s^{-\left(\frac{n-1}{3}\right)}\: ds\right)}^{3/2} \\
&\le & c \:R^{-\left(\frac{n-1}{2}\right)}_0\: {\|h\|}^2_{L^{4/3}(R_0,R)}\:.
\end{eqnarray*}
This completes the proof of part $(a)$.

\medskip

The proof of part $(b)$ only uses the error term estimate (\ref{annulus_pf_eq2}) and the boundedness of the multiplier. Indeed, using (\ref{annulus_pf_eq2}), followed by an application of the Cauchy-Schwarz inequality, we get that for all $s\in (R_0,R)$,
\begin{eqnarray*}
|T_{\psi,7}f(s)| &\lesssim & {A(s)}^{-1/2} \int_{1}^\infty \lambda^{-1} \: \left|\widehat{f}(\lambda)\right|\:{\left|{\bf c}(\lambda)\right|}^{-1}\: d\lambda \\
& \lesssim & R^{-\left(\frac{n-1}{2}\right)}_0\: {\|f\|}_{H^0(S)}\: {\left(\int_1^\infty \frac{d\lambda}{\lambda^2}\right)}^{1/2} \\
& \lesssim &  {\|f\|}_{H^0(S)}\:.
\end{eqnarray*}
This completes the proof of Lemma \ref{large_freq_lemma_annulus_pieces} and also of Lemma \ref{large_freq_lemma_annulus}. 
\end{proof}
Consequently, we get Lemma \ref{large_freq} and this completes the proof of the sufficient conditions of Theorem \ref{theorem}.

\begin{remark} \label{remark_about_proof}
While studying the boundedness of $T_{\psi,7}$, the piece corresponding to the error term $E_2$, the pointwise decay given by (\ref{annulus_pf_eq2}) was crucial. It should be noted that the situation is worse if one proceeds with the classical Harish-Chandra series expansion in stead.
\end{remark}

\section{The transference principle}
In this section, we prove the transference principle (Theorem \ref{transference_principle}), but first we look at the following lemma which compares different Sobolev norms of radial functions with Spherical Fourier transform supported away from the origin:
\begin{lemma} \label{sobolev_comparison}
Let $f \in \mathscr{S}^2(S)_o$ with $Supp(\widehat{f}) \subset (c,\infty)$ for some $c>0$. Then for $0\le \beta_1 < \beta_2 < \infty$, we have
\begin{equation*}
\|f\|_{H^{\beta_1}(S)} \le c^{-(\beta_2-\beta_1)}\: \|f\|_{H^{\beta_2}(S)}\:.
\end{equation*}
\end{lemma}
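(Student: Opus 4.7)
The approach will be a direct pointwise comparison of the integrands in the definition \eqref{sobolev_space_defn} of the two Sobolev norms, exploiting the fact that $\widehat{f}$ vanishes on $[0,c]$.

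First, I would write
\[
\|f\|_{H^{\beta_1}(S)}^2 = \int_c^\infty \left(\lambda^2 + \frac{Q^2}{4}\right)^{\beta_1} |\widehat{f}(\lambda)|^2 |\mathbf{c}(\lambda)|^{-2}\, d\lambda,
\]
where the lower limit has been raised to $c$ using the support condition on $\widehat{f}$. Next, I would factor
\[
\left(\lambda^2 + \tfrac{Q^2}{4}\right)^{\beta_1} = \left(\lambda^2 + \tfrac{Q^2}{4}\right)^{\beta_2} \cdot \left(\lambda^2 + \tfrac{Q^2}{4}\right)^{-(\beta_2-\beta_1)},
\]
and observe that for $\lambda > c$ one has $\lambda^2 + \tfrac{Q^2}{4} > c^2$, so the second factor is bounded above by $c^{-2(\beta_2-\beta_1)}$ (this uses $\beta_2 - \beta_1 > 0$).

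Plugging this pointwise bound into the integral yields
\[
\|f\|_{H^{\beta_1}(S)}^2 \le c^{-2(\beta_2-\beta_1)} \int_c^\infty \left(\lambda^2 + \tfrac{Q^2}{4}\right)^{\beta_2} |\widehat{f}(\lambda)|^2 |\mathbf{c}(\lambda)|^{-2}\, d\lambda = c^{-2(\beta_2-\beta_1)} \|f\|_{H^{\beta_2}(S)}^2,
\]
and taking square roots gives the claim. There is no serious obstacle here; it is a one-line monotonicity argument. The only thing to watch is that the exponent in the final inequality is $c^{-(\beta_2-\beta_1)}$ (not $c^{-2(\beta_2-\beta_1)}$), which matches because the Sobolev norm is the square root of the weighted $L^2$ integral.
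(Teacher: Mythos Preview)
Your proof is correct and follows essentially the same approach as the paper's: both restrict the integral to $(c,\infty)$ using the support condition, factor out $(\lambda^2+Q^2/4)^{-(\beta_2-\beta_1)}$, and bound this factor pointwise by $c^{-2(\beta_2-\beta_1)}$ on the domain of integration. The only cosmetic difference is that the paper passes through the intermediate bound $(\lambda^2+Q^2/4)^{-(\beta_2-\beta_1)} \le \lambda^{-2(\beta_2-\beta_1)}$ before using $\lambda > c$, whereas you use $\lambda^2+Q^2/4 > c^2$ directly.
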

\begin{proof}
The Lemma follows from a straightforward computation. Indeed,
\begin{eqnarray*}
\|f\|_{H^{\beta_1}(S)} &=& {\left(\int_c^\infty {\left(\lambda^2 + \frac{Q^2}{4}\right)}^{\beta_1}\: {|\widehat{f}(\lambda)|}^2\: {|{\bf c}(\lambda)|}^{-2}\: d\lambda\right)}^{1/2} \\
&=& {\left(\int_c^\infty {\left(\lambda^2 + \frac{Q^2}{4}\right)}^{-(\beta_2- \beta_1)}\: {\left(\lambda^2 + \frac{Q^2}{4}\right)}^{\beta_2}\:{|\widehat{f}(\lambda)|}^2\: {|{\bf c}(\lambda)|}^{-2}\: d\lambda\right)}^{1/2} \\
&\le & {\left(\int_c^\infty \lambda^{-2(\beta_2-\beta_1)}\: {\left(\lambda^2 + \frac{Q^2}{4}\right)}^{\beta_2}\:{|\widehat{f}(\lambda)|}^2\: {|{\bf c}(\lambda)|}^{-2}\: d\lambda\right)}^{1/2} \\
& \le & c^{-(\beta_2-\beta_1)}\:\|f\|_{H^{\beta_2}(S)}\:.  
\end{eqnarray*}
\end{proof}

\begin{proof}[Proof of Theorem \ref{transference_principle}]
Let us assume that for all $R>0$, some $\beta_0>0$ and some $p \in [1,\infty]$, the maximal estimate 
\begin{equation} \label{trans_pf_eq1}
{\|S^*_{\psi_1} f\|}_{L^p(B_R)} \lesssim {\|f\|}_{H^\beta(S)}\:,
\end{equation}
holds for all $\beta >\beta_0$ and all $f \in \mathscr{S}^2(S)_o$. We have to show that for all $R>0$, the maximal estimate 
\begin{equation} \label{trans_pf_eq2}
{\|S^*_{\psi_2} f\|}_{L^p(B_R)} \lesssim {\|f\|}_{H^\beta(S)}\:,
\end{equation}
holds for all $\beta >\beta_0$ and all $f \in \mathscr{S}^2(S)_o$.

\medskip

By the hypothesis, there exist positive constants $\Lambda$ and $C_7$ such that $\psi_1,\psi_2$, the phase functions of the corresponding multipliers  satisfy
\begin{equation} \label{trans_pf_eq3}
|\psi_1(\lambda)-\psi_2(\lambda)| \le C_7
\:,\:\text{ for } \lambda \in (\Lambda,\infty)\:,
\end{equation}
and $\psi_1,\psi_2 \in C^\infty(\Lambda,\infty)$. Now let us choose and fix $R>0$ and $f \in \mathscr{S}^2(S)_o$. We next consider an auxiliary non-negative, even function $\eta \in C^\infty_c(\R)$ such that $Supp(\eta) \subset \left\{\xi : 1/2 < |\xi| <2\right\}$ and 
\begin{equation*}
\displaystyle\sum_{k=-\infty}^\infty \eta \left(2^{-k} \xi\right)=1\:,\: \text{ for } \xi \ne 0\:.
\end{equation*}
Let $k_0$ be the largest $k \in \Z$ such that $k \le \log_2 \lceil \Lambda \rceil +1$. Then setting 
\begin{equation*}
\eta_{k_0}(\xi):= \displaystyle\sum_{k=-\infty}^{k_0} \eta \left(2^{-k} \xi\right)\:,
\end{equation*}
and for each $k > k_0$, setting
\begin{equation*}
\eta_k(\xi):= \eta\left(2^{-k}\xi\right)\:,
\end{equation*}
we get,
\begin{equation*}
\widehat{f} \equiv \widehat{f} \cdot \eta_{k_0} + \displaystyle\sum_{k>k_0} \widehat{f} \cdot \eta_k\:.
\end{equation*}
By the Schwartz isomorphism theorem, for each $k \ge k_0$, there exists $f_k \in \mathscr{S}^2(S)_o$ such that $\widehat{f_k}=\widehat{f} \cdot \eta_k$ and hence,
\begin{equation} \label{trans_pf_eq4}
\widehat{f} \equiv \widehat{f_{k_0}}+ \displaystyle\sum_{k>k_0} \widehat{f_k}\:,
\end{equation}
with $Supp\left(\widehat{f_{k_0}}\right) \subset \left[0,2^{k_0+1}\right)$ and for $k>k_0$, $Supp\left(\widehat{f_k}\right) \subset \left(2^{k-1},2^{k+1}\right)$. Then (\ref{trans_pf_eq4}) yields,
\begin{equation} \label{trans_pf_eq5}
{\|S^*_{\psi_2} f\|}_{L^p(B_R)} \le {\|S^*_{\psi_2} f_{k_0}\|}_{L^p(B_R)} + \displaystyle\sum_{k>k_0} {\|S^*_{\psi_2} f_k\|}_{L^p(B_R)}\:.
\end{equation}
Now proceeding as in the proof of the small frequency case (Lemma \ref{small_freq}), we have
\begin{equation} \label{trans_pf_eq6}
{\|S^*_{\psi_2} f_{k_0}\|}_{L^p(B_R)} \lesssim {\|f_{k_0}\|}_{H^\beta(S)} \le {\|f\|}_{H^\beta(S)} \:, \text{ for all } \beta>\beta_0\:.
\end{equation}
Next for each $k>k_0$, by using the formula (\ref{dispersive_soln}) and Taylor's formula, we have for $s \in [0,R]$ and $t \in (0,1)$,
\begin{eqnarray} \label{trans_pf_eq7}
&&\left| S_{\psi_1,t}f_k(s) - S_{\psi_2,t}f_k(s) \right| \nonumber\\
&=& \left| \int_{0}^\infty \varphi_\lambda(s)\:\left(e^{it\psi_1(\lambda)}-e^{it\psi_2(\lambda)}\right)\:\widehat{f_k}(\lambda)\: {|{\bf c}(\lambda)|}^{-2}\: d\lambda\:\right| \nonumber\\
&=& \left| \int_{0}^\infty \varphi_\lambda(s)\:e^{it\psi_1(\lambda)}\:\left(1-e^{it(\psi_2(\lambda)-\psi_1(\lambda))}\right)\:\widehat{f_k}(\lambda)\: {|{\bf c}(\lambda)|}^{-2}\: d\lambda\:\right| \nonumber\\
&=&\left| \int_{0}^\infty \varphi_\lambda(s)\:e^{it\psi_1(\lambda)}\:\left[1-\displaystyle\sum_{j=0}^\infty \frac{{\left\{it(\psi_2(\lambda)-\psi_1(\lambda))\right\}}^j}{j!}\right]\:\widehat{f_k}(\lambda)\: {|{\bf c}(\lambda)|}^{-2}\: d\lambda\:\right| \nonumber\\
& \le & \displaystyle\sum_{j=1}^\infty \frac{t^j}{j!} \left| \int_{0}^\infty \varphi_\lambda(s)\:e^{it\psi_1(\lambda)}\:\left[ \psi_2(\lambda)-\psi_1(\lambda)\right]^j\:\widehat{f_k}(\lambda)\: {|{\bf c}(\lambda)|}^{-2}\: d\lambda\:\right|\:.
\end{eqnarray}
Now again by the Schwartz isomorphism theorem, for each $j \in \N$, there exists $g_{j,k} \in \mathscr{S}^2(S)_o$ such that 
\begin{equation*}
\widehat{g_{j,k}} = \left[ \psi_2-\psi_1\right]^j\:\widehat{f_k}\:.
\end{equation*} 
Then again using the formula (\ref{dispersive_soln}), we obtain from (\ref{trans_pf_eq7}),
\begin{equation} \label{trans_pf_eq8}
\left\|\displaystyle\sup_{0<t<1}\left| S_{\psi_1,t}f_k - S_{\psi_2,t}f_k \right|\right\|_{L^p(B_R)} \le \displaystyle\sum_{j=1}^\infty \frac{1}{j!} {\left\|S^*_{\psi_1} g_{j,k}\right\|}_{L^p(B_R)}\:. 
\end{equation}
For any $\varepsilon>0$, we set $\beta_1=\beta_0 +\frac{\varepsilon}{2}$ and $\beta_2=\beta_0 +\varepsilon$. Then noting that $Supp(\widehat{g_{j,k}}) \subset \left(2^{k-1},2^{k+1}\right)$ and applying (\ref{trans_pf_eq1}) and Lemma \ref{sobolev_comparison}, we get that
\begin{equation*}
{\left\|S^*_{\psi_1} g_{j,k}\right\|}_{L^p(B_R)} \lesssim {\|g_{j,k}\|}_{H^{\beta_1}(S)} \lesssim 2^{-\frac{k\varepsilon}{2}}\:  {\|g_{j,k}\|}_{H^{\beta_2}(S)} \:,
\end{equation*} 
where the resultant implicit constant is independent of $k$. Plugging the above in (\ref{trans_pf_eq8}),
\begin{equation} \label{trans_pf_eq9}
\left\|\displaystyle\sup_{0<t<1}\left| S_{\psi_1,t}f_k - S_{\psi_2,t}f_k \right|\right\|_{L^p(B_R)} \lesssim 2^{-\frac{k\varepsilon}{2}} \displaystyle\sum_{j=1}^\infty \frac{1}{j!} {\|g_{j,k}\|}_{H^{\beta_2}(S)}\:. 
\end{equation}
Next using the definition of $g_{j,k}$ and (\ref{trans_pf_eq3}), it follows that
\begin{equation*}
{\|g_{j,k}\|}_{H^{\beta_2}(S)} \le C_7^j\: {\|f_k\|}_{H^{\beta_2}(S)}\:.
\end{equation*}
Plugging the above in (\ref{trans_pf_eq9}), we have
\begin{eqnarray} \label{trans_pf_eq10}
\left\|\displaystyle\sup_{0<t<1}\left| S_{\psi_1,t}f_k - S_{\psi_2,t}f_k \right|\right\|_{L^p(B_R)} &\lesssim & 2^{-\frac{k\varepsilon}{2}} \displaystyle\sum_{j=1}^\infty \frac{C_7^j}{j!} {\|f_k\|}_{H^{\beta_2}(S)} \nonumber\\
&\lesssim & 2^{-\frac{k\varepsilon}{2}} {\|f_k\|}_{H^{\beta_2}(S)}\:.
\end{eqnarray}
Hence for each $k>k_0$ using (\ref{trans_pf_eq1}), (\ref{trans_pf_eq10}) and Lemma \ref{sobolev_comparison}, it follows that
\begin{eqnarray*}
{\|S^*_{\psi_2} f_k\|}_{L^p(B_R)} &\le & \left\|\displaystyle\sup_{0<t<1}\left| S_{\psi_1,t}f_k - S_{\psi_2,t}f_k \right|\right\|_{L^p(B_R)} + {\|S^*_{\psi_1} f_k\|}_{L^p(B_R)} \\
& \lesssim & 2^{-\frac{k\varepsilon}{2}} {\|f_k\|}_{H^{\beta_2}(S)} \:+\: {\|f_k\|}_{H^{\beta_1}(S)} \\
& \lesssim & 2^{-\frac{k\varepsilon}{2}} {\|f_k\|}_{H^{\beta_2}(S)} \\
& \le & 2^{-\frac{k\varepsilon}{2}} {\|f\|}_{H^{\beta_2}(S)}\:.
\end{eqnarray*}
Then plugging the above and (\ref{trans_pf_eq6}) in (\ref{trans_pf_eq5}), we get that
\begin{eqnarray*}
{\|S^*_{\psi_2} f\|}_{L^p(B_R)} &\le & {\|S^*_{\psi_2} f_{k_0}\|}_{L^p(B_R)} + \displaystyle\sum_{k>k_0} {\|S^*_{\psi_2} f_k\|}_{L^p(B_R)}\\
& \lesssim & \left(1+ \displaystyle\sum_{k>k_0} 2^{-\frac{k\varepsilon}{2}} \right){\|f\|}_{H^{\beta_2}(S)} \\
& \lesssim & {\|f\|}_{H^{\beta_2}(S)}\:.
\end{eqnarray*}
Now as $R>0,\:\varepsilon>0$ and $f \in \mathscr{S}^2(S)_o$ were arbitrarily chosen, (\ref{trans_pf_eq2}) follows. This completes the proof of Theorem \ref{transference_principle}.
\end{proof}

\section{Proof of the necessary conditions of Theorem \ref{theorem}}
We present the proof of necessity of the conditions by dividing it into three different cases depending on the value/range of values of the Sobolev exponent $\beta$.

\medskip
{\bf Case 1}: {\em If $\beta < 1/4$, then the inequality (\ref{estimates_on_balls}) holds for no $p \in [1,\infty]$.}
\medskip
\noindent

For $f \in \mathscr{S}^2(S)_o$, we recall the linearized maximal function,
\begin{equation*}
T_\psi f(s) = \int_{0}^\infty \varphi_\lambda(s)\:e^{it(s)\psi(\lambda)}\:\widehat{f}(\lambda)\: {|{\bf c}(\lambda)|}^{-2}\: d\lambda\:.
\end{equation*}
It suffices to prove that there is no inequality of the form,
\begin{equation} \label{first_example_eq1}
\int_0^1 \left|T_\psi f(s)\right|\:A(s)\:ds \lesssim {\|f\|}_{H^\beta(S)}\:,
\end{equation}
for any $\beta <1/4$. 

\medskip

We first work out the case for the Fractional Schr\"odinger equation (for $a>1$) corresponding to $\Delta$, that is, 
\begin{equation*}
\psi(\lambda) = {\left(\lambda^2 + \frac{Q^2}{4}\right)}^{a/2}\:.
\end{equation*}
Let $\eta \in C^\infty_c(\R)_e$ be (non-zero) non-negative with $Supp(\eta) \subset (-1,1)$ and for large $N \in \N$, choose $f_N \in \mathscr{S}^2(S)_o$ such that 
\begin{equation*}
\widehat{f_N}(\lambda)=N^{-1/2}\: \eta \left(-N^{-1/2} \lambda + N^{1/2}\right)|{\bf c}(\lambda)|\:.
\end{equation*}
It follows that $Supp(\widehat{f_N}) \subset [N-N^{1/2}\:,\:N+N^{1/2}]$ and 
\begin{eqnarray*}
{\|f_N\|}_{H^\beta(S)} &=& {\left(\int_0^\infty {\left(\lambda^2 + \frac{Q^2}{4}\right)}^\beta \: \widehat{f_N}(\lambda)^2 \: {|{\bf c}(\lambda)|}^{-2} d\lambda\right)}^{1/2} \\
&=& {\left(\frac{1}{N}\int_{N-N^{1/2}}^{N+N^{1/2}} {\left(\lambda^2 + \frac{Q^2}{4}\right)}^\beta \: \eta \left(-N^{-1/2} \lambda + N^{1/2}\right)^2 \: d\lambda\right)}^{1/2} \\
& \lesssim & {\left(\frac{1}{N}\int_{N-N^{1/2}}^{N+N^{1/2}} N^{2\beta} \: d\lambda\right)}^{1/2} \\
& \lesssim & N^{\beta - \frac{1}{4}}\:.
\end{eqnarray*}
Thus, we have
\begin{equation} \label{first_example_eq2}
{\|f_N\|}_{H^\beta(S)} \to 0\:,\:\text{ as } N \to \infty\:,\:\text{ if } \beta <1/4\:.
\end{equation}
We now aim to show that for $\varepsilon>0$ sufficiently small, there exists $N_0 \in \N$ such that for all $N > N_0$, there exists a constant $c>0$, independent of $N$ so that
\begin{equation} \label{first_example_eq3}
|T_\psi f_N(s)| >c\:,\:\text{ for } s \in [\varepsilon,2\varepsilon]\:.
\end{equation}
Then in view of (\ref{first_example_eq2}) and (\ref{first_example_eq3}), we note that (\ref{first_example_eq1}) must fail. Therefore it suffices to prove (\ref{first_example_eq3}).

\medskip

For $\varepsilon>0$ sufficiently small and $s \in [\varepsilon,2\varepsilon]$, by putting $M=0$ in Lemma \ref{bessel_series_expansion}, we expand $\varphi_\lambda$ in the Bessel series: 
\begin{equation} \label{first_example_eq4}
\varphi_\lambda(s)= c_0 {\left(\frac{s^{n-1}}{A(s)}\right)}^{1/2} \J_{\frac{n-2}{2}}(\lambda s) + E_1(\lambda,s)\:.
\end{equation}
We recall the definition,
\begin{equation} \label{first_example_eq5}
\J_{\frac{n-2}{2}}(\lambda s)= 2^{\frac{n-2}{2}} \: \pi^{1/2} \: \Gamma\left(\frac{n-1}{2}\right) \frac{J_{\frac{n-2}{2}}(\lambda s)}{(\lambda s)^{\left(\frac{n-2}{2}\right)}}\:.
\end{equation}
Now choosing $N_1 \in \N$ so that
\begin{equation*}
N_1 - N^{1/2}_1 > \frac{B_n}{\varepsilon}\:,
\end{equation*}
where $B_n=\max\left\{A_{\frac{n-2}{2}},1\right\}$ ($A_{\frac{n-2}{2}}$ is as in Lemma \ref{bessel_function_expansion}), note that for $N \ge N_1$,  we have for $\lambda \in Supp(\widehat{f_N})$,
\begin{equation*}
\frac{B_n}{s} \le \frac{B_n}{\varepsilon} < N - N^{1/2} \le \lambda \:.
\end{equation*} 
Thus for $s \in [\varepsilon,2 \varepsilon]$ and $\lambda \in Supp(\widehat{f_N})$, we get that
\begin{equation*}
\left|E_1(\lambda,s)\right| \lesssim s^2\: {(\lambda s)}^{-\frac{n+1}{2}}\:.
\end{equation*}
Moreover, by Lemma \ref{bessel_function_expansion},
\begin{eqnarray} \label{first_example_eq6}
J_{\frac{n-2}{2}}(\lambda s) &=& \sqrt{\frac{2}{\pi}} \frac{1}{{(\lambda s)}^{1/2}} \cos \left(\lambda s - \frac{\pi}{2}\left(\frac{n-2}{2}\right) - \frac{\pi}{4}\right) + \tilde{E}_{\frac{n-2}{2}}(\lambda s) \nonumber \\
&=& \sqrt{\frac{1}{2\pi}} \frac{1}{{(\lambda s)}^{1/2}} \left\{e^{i\left(\lambda s - \frac{\pi}{4}(n-1)\right)} + e^{-i\left(\lambda s - \frac{\pi}{4}(n-1)\right)}\right\} + \tilde{E}_{\frac{n-2}{2}}(\lambda s)\:,
\end{eqnarray}
where
\begin{equation*}
\left|\tilde{E}_{\frac{n-2}{2}}(\lambda s)\right| \lesssim \frac{1}{{(\lambda s)}^{3/2}}\:.
\end{equation*}
Then pluggining (\ref{first_example_eq6}) and (\ref{first_example_eq5}) in (\ref{first_example_eq4}), we obtain 
\begin{equation} \label{first_example_eq7}
\varphi_\lambda(s)= c {\left(\frac{s^{n-1}}{A(s)}\right)}^{1/2} \frac{1}{{(\lambda s)}^{\frac{n-1}{2}}} \left\{e^{i\left(\lambda s - \frac{\pi}{4}(n-1)\right)} + e^{-i\left(\lambda s - \frac{\pi}{4}(n-1)\right)}\right\} + \mathscr{E}_1(\lambda,s)\:,
\end{equation}
where
\begin{equation*}
\mathscr{E}_1(\lambda,s) = E_1(\lambda,s) + c {\left(\frac{s^{n-1}}{A(s)}\right)}^{1/2} \:\frac{\tilde{E}_{\frac{n-2}{2}}(\lambda s)}{(\lambda s)^{\left(\frac{n-2}{2}\right)}}\:,
\end{equation*}
and thus
\begin{eqnarray} \label{first_example_eq8}
\left|\mathscr{E}_1(\lambda,s)\right| & \lesssim &  \left\{s^2 {(\lambda s)}^{-\frac{n+1}{2}} + {(\lambda s)}^{-\frac{n+1}{2}} \right\} \nonumber \\
& \lesssim & {(\lambda s)}^{-\frac{n+1}{2}}\:.
\end{eqnarray}

Thus for $s \in [\varepsilon, 2\varepsilon]$ and $\lambda \in Supp(\widehat{f_N})$, we invoke the decomposition of $\varphi_\lambda$ given by (\ref{first_example_eq7}) to write
\begin{equation} \label{first_example_eq9}
T_{\psi}f_N(s)= U_1f_N(s) + U_2f_N(s) + U_3f_N(s)\:,
\end{equation}
where
\begin{eqnarray*}
U_1f_N(s)&=& c \bigintssss_{N-N^{1/2}}^{N+N^{1/2}} {\left(\frac{s^{n-1}}{A(s)}\right)}^{1/2} \frac{1}{{(\lambda s)}^{\frac{n-1}{2}}} \: e^{i\left(\lambda s - \frac{\pi}{4}(n-1)\right)}  \:e^{it(s){\left(\lambda^2 + \frac{Q^2}{4}\right)}^{a/2}}\:\widehat{f_N}(\lambda)\: {|{\bf c}(\lambda)|}^{-2}\: d\lambda\:, \\
U_2f_N(s)&=& c \bigintssss_{N-N^{1/2}}^{N+N^{1/2}} {\left(\frac{s^{n-1}}{A(s)}\right)}^{1/2} \frac{1}{{(\lambda s)}^{\frac{n-1}{2}}} \: e^{-i\left(\lambda s - \frac{\pi}{4}(n-1)\right)}  \:e^{it(s){\left(\lambda^2 + \frac{Q^2}{4}\right)}^{a/2}}\:\widehat{f_N}(\lambda)\: {|{\bf c}(\lambda)|}^{-2}\: d\lambda \:,\\
U_3f_N(s)&=& \bigintssss_{N-N^{1/2}}^{N+N^{1/2}} \mathscr{E}_1(\lambda,s) \:e^{it(s){\left(\lambda^2 + \frac{Q^2}{4}\right)}^{a/2}}\:\widehat{f_N}(\lambda)\: {|{\bf c}(\lambda)|}^{-2}\: d\lambda \:.
\end{eqnarray*}

By plugging in the definition of $\widehat{f_N}$ in terms of $\eta$ and then performing the change of variable $\xi=-N^{-1/2} \lambda + N^{1/2}$, we get
\begin{eqnarray*}
&&U_2f_N(s)\\
&=& \frac{ce^{i\frac{\pi}{4}(n-1)}}{{A(s)}^{1/2}} \bigintssss_{N-N^{1/2}}^{N+N^{1/2}} e^{-i\left\{\lambda s -t(s){\left(\lambda^2 + \frac{Q^2}{4}\right)}^{a/2} \right\}}  \:\frac{N^{-1/2}\: \eta \left(-N^{-1/2} \lambda + N^{1/2}\right)}{\lambda^{\frac{n-1}{2}}}\: {|{\bf c}(\lambda)|}^{-1}\: d\lambda \\
&=& \frac{ce^{i\frac{\pi}{4}(n-1)}}{{A(s)}^{1/2}} \bigintssss_{-1}^1 e^{i \theta(\xi)}\:\zeta(\xi)\:d\xi\:,
\end{eqnarray*}
where 
\begin{eqnarray*}
&& \theta(\xi)= \left(N^{1/2} \xi - N\right)s + t(s) \left[(N-N^{1/2} \xi)^2 + \frac{Q^2}{4}\right]^{a/2} \:, \\
&& \zeta(\xi)= \eta(\xi)\: \frac{{\left|{\bf c}\left(N-N^{1/2}\xi\right)\right|}^{-1}}{\left(N-N^{1/2}\xi\right)^{\frac{n-1}{2}}}\:.
\end{eqnarray*}
Now we note that
\begin{equation*}
\theta(\xi)= N^{1/2} \xi s- Ns + t(s)N^a \left[1-2N^{-1/2}\xi + N^{-1}\xi^2 + \frac{N^{-2}Q^2}{4}\right]^{a/2} \:.
\end{equation*}
As $|\xi| \le 1$, there exists $N_2 \in \N$ such that for all $N> N_2$,
\begin{equation*}
\left|-2N^{-1/2}\xi + N^{-1}\xi^2 + \frac{N^{-2}Q^2}{4}\right| \le 2N^{-1/2} + N^{-1} + \frac{N^{-2}Q^2}{4} < 1\:,
\end{equation*}
and thus applying the generalized Binomial expansion along with $|\xi| \le 1$, we get that
\begin{eqnarray*}
\theta(\xi)&=& N^{1/2} \xi s- Ns + t(s)N^a \left[1-aN^{-1/2}\xi + \frac{a(a-1)}{2}N^{-1}\xi^2 + \mathcal{O}\left(N^{-3/2}\right)\right] \\
&=& t(s)N^a - Ns + N^{1/2} \xi s - t(s)aN^{a-\frac{1}{2}} \xi + \frac{a(a-1)}{2}t(s) N^{a-1}\xi^2 + \mathcal{O}\left(t(s)N^{a-\frac{3}{2}}\right)\:.
\end{eqnarray*}

\noindent Now choosing, 
\begin{equation*}
t(s)=\frac{s}{aN^{a-1}}\:,
\end{equation*}
we get rid of the linear terms in $\xi$ to obtain,
\begin{equation*}
\theta(\xi)=\frac{a-1}{2}s\xi^2 + C(a,N,s)\:.
\end{equation*}
Thus for $s \in [\varepsilon\:,\:2\varepsilon]$, using the growth asymptotic of the density function (\ref{density_function}), it follows that
\begin{equation*}
\left|U_2f_N(s)\right| \ge c\: \varepsilon^{-\frac{n-1}{2}} \left|\int_{-1}^1 e^{i\frac{(a-1)}{2}s\xi^2}\:\zeta(\xi)\:d\xi\right|\:.
\end{equation*}
Now recalling the definition of $\zeta$, we note that $\eta$ is non-negative and as $\xi \in [-1,1]$, 
\begin{equation*}
N-N^{1/2} \le N-N^{1/2}\xi \le N+N^{1/2}\:,
\end{equation*} 
and thus
\begin{equation*}
\frac{{\left|{\bf c}\left(N-N^{1/2}\xi\right)\right|}^{-1}}{\left(N-N^{1/2}\xi\right)^{\frac{n-1}{2}}} \asymp 1\:.
\end{equation*}
Then using the above information, expanding the last integral in terms of sine and cosine and using their properties along with the fact that $s \in [\varepsilon, 2\varepsilon]$ for $\varepsilon>0$ small, it easily follows that for $N > \max\{N_1,N_2\}$, there exists a positive constant $C_8$, independent of $N$ such that 
\begin{equation} \label{first_example_eq10}
\left|U_2f_N(s)\right| > C_8\:,\:\:\:\: \text{for  } s \in [\varepsilon,2\varepsilon]\:.
\end{equation} 

We next show that for large $N$, the growths of both $U_1f_N$ and $U_3f_N$ are subordinated by $U_2f_N$. We first focus on $U_3f_N$. For $s \in [\varepsilon,2\varepsilon]$, using the error term estimates (\ref{first_example_eq8}), we get that there exists $N_3 \in \N$ such that for all $N \ge N_3$,
\begin{eqnarray} \label{first_example_eq11}
\left|U_3f_N(s)\right| &\le & \frac{c}{N^{1/2}}\bigintssss_{N-N^{1/2}}^{N+N^{1/2}}(\lambda s)^{-\frac{n+1}{2}} \:\eta\left(-N^{-1/2}\lambda + N^{1/2}\right)\: {|{\bf c}(\lambda)|}^{-1}\: d\lambda \nonumber\\
&\le & \frac{c\:\varepsilon^{-\frac{n+1}{2}}}{N^{1/2}} \bigintssss_{N-N^{1/2}}^{N+N^{1/2}} \frac{d\lambda}{\lambda} \nonumber\\
& \le & \frac{c\:\varepsilon^{-\frac{n+1}{2}}}{N} \nonumber\\
& < & \frac{C_8}{4}\:.
\end{eqnarray}
The inequality in the third line follows from the elementary estimate
\begin{equation*}
\log\left(\frac{x+1}{x-1}\right) \lesssim \frac{1}{x}\:,
\end{equation*}
which is valid for all large $x$.

\medskip

We now shift our focus to $U_1f_N$. Again proceeding as in the case of $U_2f_N$, we get
\begin{equation*}
U_1f_N(s)= \frac{ce^{-i\frac{\pi}{4}(n-1)}}{{A(s)}^{1/2}} \bigintssss_{-1}^1 e^{i \theta(\xi)}\:\zeta(\xi)\:d\xi\:,
\end{equation*}
where 
\begin{eqnarray*}
&& \theta(\xi)= -N^{1/2} s\xi + Ns + t(s) \left[(N-N^{1/2} \xi)^2 + \frac{Q^2}{4}\right]^{a/2} \:, \\
&& \zeta(\xi)= \eta(\xi)\: \frac{{\left|{\bf c}\left(N-N^{1/2}\xi\right)\right|}^{-1}}{\left(N-N^{1/2}\xi\right)^{\frac{n-1}{2}}}\:.
\end{eqnarray*}
Now by Lemma \ref{Sjolin_lemma} (for $l=1$ in the statement),
\begin{equation} \label{first_example_eq12}
\int_{-1}^1 e^{i\theta(\xi)}\:\zeta(\xi)\:d\xi = i \int_{-1}^1 e^{i\theta(\xi)}\: \left[\frac{\zeta'(\xi)}{\theta'(\xi)}-\frac{\zeta(\xi)\theta''(\xi)}{{\left(\theta'(\xi)\right)}^2}\right]\: d\xi\:.
\end{equation}
We note that
\begin{equation*}
\theta'(\xi)= -N^{1/2} \left[s+t(s)\:a \left(N-N^{1/2}\xi\right)\left\{\left(N-N^{1/2}\xi\right)^2 + \frac{Q^2}{4}\right\}^{\left(\frac{a}{2}-1\right)} \right]\:.
\end{equation*}
Now as $\xi \in [-1,1]$, we have that 
\begin{equation*}
N-N^{1/2}\xi \ge 0\:,
\end{equation*}
and thus for $s \in [\varepsilon,2\varepsilon]$,
\begin{equation*}
s+t(s)\:a \left(N-N^{1/2}\xi\right)\left\{\left(N-N^{1/2}\xi\right)^2 + \frac{Q^2}{4}\right\}^{\left(\frac{a}{2}-1\right)} \ge \varepsilon \:.
\end{equation*}
This implies that
\begin{equation} \label{first_example_eq13}
|\theta'(\xi)| \ge \varepsilon N^{1/2}\:.
\end{equation}
We next note that
\begin{equation*}
\theta''(\xi)= t(s)aN \left\{\left(N-N^{1/2}\xi\right)^2 + \frac{Q^2}{4}\right\}^{\left(\frac{a}{2}-2\right)} \left\{(a-1)\left(N-N^{1/2}\xi\right)^2 + \frac{Q^2}{4}\right\}\:.
\end{equation*}
Recalling that $\xi \in [-1,1]$, $t(s)=s/(aN^{a-1})$ and $s \in [\varepsilon,2\varepsilon]$, we get that there exists $N_4 \in \N$ such that for all $N > N_4$,
\begin{equation} \label{first_example_eq14}
|\theta''(\xi)| \le C N^{1-a}NN^{a-4}N^2=C\:.
\end{equation}
Now recalling the definition of $\zeta$, we note that as $\xi \in [-1,1]$, 
\begin{equation*}
N-N^{1/2} \le N-N^{1/2}\xi \le N+N^{1/2}\:,
\end{equation*} 
and thus
\begin{equation*}
\frac{{\left|{\bf c}\left(N-N^{1/2}\xi\right)\right|}^{-1}}{\left(N-N^{1/2}\xi\right)^{\frac{n-1}{2}}} \asymp 1\:.
\end{equation*}
It follows that
\begin{equation} \label{first_example_eq15}
|\zeta(\xi)| \le C\:.
\end{equation}
We next note that
\begin{eqnarray*}
\zeta'(\xi) &=& \eta'(\xi)\:\frac{{\left|{\bf c}\left(N-N^{1/2}\xi\right)\right|}^{-1}}{\left(N-N^{1/2}\xi\right)^{\frac{n-1}{2}}}\: +\: \eta(\xi)\: \frac{\frac{d}{d\xi}\left({\left|{\bf c}\left(N-N^{1/2}\xi\right)\right|}^{-1}\right)}{\left(N-N^{1/2}\xi\right)^{\frac{n-1}{2}}} \\
&&+ \frac{(n-1)N^{1/2}\eta(\xi)\:{\left|{\bf c}\left(N-N^{1/2}\xi\right)\right|}^{-1}}{2\left(N-N^{1/2}\xi\right)^{\frac{n+1}{2}}}
\end{eqnarray*}
Then using the pointwise and derivative estimates of ${\left|\bf c(\cdot)\right|}^{-2}$ given in (\ref{plancherel_measure}) and (\ref{c-fn_derivative_estimates}), we get that there exists $N_5 \in \N$ such that for all $N>N_5$,
\begin{equation} \label{first_example_eq16}
|\zeta'(\xi)| \le C \left(1+ N^{-1/2} + N^{-1/2}\right) \le C\:.
\end{equation}
Thus for $s \in [\varepsilon,2\varepsilon]$, using (\ref{first_example_eq12})-(\ref{first_example_eq16}), there exists $N_6 \in \N$ such that for all $N>N_6$,
\begin{eqnarray} \label{first_example_eq17}
\left|U_1f_N(s)\right| &\le & C\:\varepsilon^{-\frac{n-1}{2}} \left|\bigintssss_{-1}^1 e^{i \theta(\xi)}\:\zeta(\xi)\:d\xi\right| \nonumber \\
&\le &  C\:\varepsilon^{-\frac{n-1}{2}}\:N^{-1/2} \nonumber\\
& < & \frac{C_8}{4} \:.
\end{eqnarray}
Hence for $s \in [\varepsilon,2 \varepsilon]$, setting $N_0=\displaystyle\max_{1 \le j \le 6}N_j$, for all $N>N_0$, in view of the decomposition (\ref{first_example_eq9}) we get from (\ref{first_example_eq10}), (\ref{first_example_eq11}) and (\ref{first_example_eq17}) that
\begin{equation*}
\left|T_\psi f_N(s)\right| \ge  \left|U_2f_N(s)\right| - \left|U_1f_N(s)\right| - \left|U_3f_N(s)\right| > \frac{C_8}{2}\:. 
\end{equation*}
This yields (\ref{first_example_eq3}) and hence the proof of case 1 for the Fractional Schr\"odinger equation (for $a>1$) corresponding to $\Delta$, is completed.

\medskip

The above arguments can be carried out verbatim for the Fractional Schr\"odinger equation (for $a>1$) corresponding to the shifted operator $\tilde{\Delta}$ (perhaps even a bit simpler due to no spectral gap).

\medskip

The case of the Boussinesq equation and the Beam equation (both corresponding to $\Delta$ as well as $\tilde{\Delta}$) follow from the result for the Schr\"odinger equation which we just proved and the Transference principle (Theorem \ref{transference_principle}). Indeed, in their case, if the estimate (\ref{estimates_on_balls}) is true for some $\beta_0<1/4$, then it would also be true for any $\beta>\beta_0$. Then by point $(ii)$ of Remark \ref{examples_remark} and the transference principle (Theorem \ref{transference_principle}), (\ref{estimates_on_balls}) is also true for the classical Schr\"odinger equation ($a=2$) for any $\beta>\beta_0$ and hence in particular for the choice 
\begin{equation*}
\beta = \frac{1}{2}\left(\beta_0 + \frac{1}{4}\right) < \frac{1}{4}\:.
\end{equation*} 
But that is a contradiction. This completes the proof of case 1.

\medskip
{\bf Case 2}: {\em If $1/4 \le \beta < n/2$, then the condition $p \le 2n/(n-2\beta)$ is necessary for validity of the inequality (\ref{estimates_on_balls}).}

\medskip
\noindent
Let $\eta \in C^\infty_c(\R)_{e}$ be (non-zero) non-negative with $Supp(\eta) \subset (1,2)$\:. Then for each  $N \in \N$, there exists $g_N\in \mathscr{S}(\R^n)_{o}$ such that \begin{equation*}
\mathscr{F}g_N(\lambda)=\eta(\lambda/N)\:,\:\:\:\:\:\:\lambda\in [0,\infty)\:.
\end{equation*}
We note that
\begin{equation}\label{f1}
g_1(0)=\int_0^{\infty}\eta(\lambda)\lambda^{n-1}d\lambda=C_9>0\:,
\end{equation}
and for $s\in [0,\infty)$, by the Euclidean Fourier inversion applied to radial functions
\begin{eqnarray*}
g_N(s)&=& \int_0^{\infty}\eta\left(\frac{\lambda}{N}\right)\J_{\frac{n-2}{2}}(\lambda s)\lambda^{n-1}d\lambda\\
&=&N^n\int_0^{\infty}\eta(\lambda)\J_{\frac{n-2}{2}}(\lambda N s)\lambda^{n-1}d\lambda=N^ng_1(Ns)\:.
\end{eqnarray*}
Therefore, by (\ref{f1}) there exists $\varepsilon\in (0,1/2)$ such that
\begin{equation}\label{fn}
|g_N(s)|>\frac{C_9}{2}N^n\:,\:\:\:\:\:\:\text{for all, $0\le s<\frac{\varepsilon}{N}$.}
\end{equation}
Now from the definition of $g_N$, it follows that $Supp\:(\:\mathscr{F}g_N\:) \subset (N,2N)$. Then by Lemma \ref{schwartz_correspondence}, for each $N \in \N$, there exists $f_N \in \mathscr{S}^2(S)_o$ such that 
\begin{equation} \label{correspondence}
\lambda^{n-1} \mathscr{F}g_N(\lambda) = {|{\bf c}(\lambda)|}^{-2}\: \widehat{f_N}(\lambda)\:,
\end{equation}
for all $\lambda$\:. By (\ref{correspondence}), it follows that
\begin{eqnarray} \label{sharpness_eq6}
{\|f_N\|}_{H^\beta(S)} &=& {\left(\int_0^\infty {\left(\lambda^2 + \frac{Q^2}{4}\right)}^\beta \eta\left(\lambda/N\right)^2 \: \frac{\lambda^{2(n-1)}}{{|{\bf c}(\lambda)|}^{-4}}\:{|{\bf c}(\lambda)|}^{-2} d\lambda\right)}^{1/2} \nonumber\\
& \lesssim & N^{\beta +\frac{n}{2}} {\left(\int_1^2 \eta(\lambda)^2 \: \lambda^{2\beta + n-1}\:d\lambda\right)}^{1/2} \nonumber\\
& \lesssim & N^{\beta +\frac{n}{2}}\:.
\end{eqnarray}
Next, for $s\in[0,\varepsilon/N)$, using the spherical Fourier inversion along with the decomposition (\ref{ball_pf_eq1}), we write
\begin{eqnarray*}
S_{\psi,0}f_N(s) = f_N(s) &=&  \int_0^{\infty} \what{f_N}(\lambda)\varphi_\lambda(s)\:{|{\bf c}(\lambda)|}^{-2} d\lambda \\
&=&c_0\left(\frac{s^{n-1}}{A(s)}\right)^{1/2}\int_0^{\infty}\what{f_N}(\lambda)\J_{\frac{n-2}{2}}(\lambda s)\:{|{\bf c}(\lambda)|}^{-2} d\lambda \\
&&\:\:\:\:+ \int_0^{\infty}\what{f_N}(\lambda)E_1(\lambda, s)\:{|{\bf c}(\lambda)|}^{-2} d\lambda=f_{N,1}(s)+f_{N,2}(s)\:.
\end{eqnarray*}
Then using (\ref{correspondence}) and proceeding as in (\ref{linearized_max_fn_comparison}), it follows that
\begin{equation*}
|f_{N,1}(s)|\asymp |g_N(s)|\;,\:\:\:\:\:\text{for all, $s\in [0,\varepsilon/N)$,}
\end{equation*}
where the implicit constant is independent of $N$. Consequently, it follows from (\ref{fn}) that there exists a positive number $c$ (independent of $N$) such that
\begin{equation}\label{t1}
|f_{N,1}(s)|\geq cN^n\:,\:\:\:\:\:\text{for all, $s\in [0,\varepsilon/N)$.}
\end{equation}
We next show that the growth of $f_{N,2}$ is subordinated by $f_{N,1}$. As $s\in [0,\varepsilon/N)$ and $\varepsilon\in (0,1/2)$ it follows that for $\lambda\in (1,2)$ the quantity $\lambda Ns$ is smaller than $1$. Hence using the estimate (\ref{ball_pf_eq2}) of the error term  $E_1$, we get that for $s\in [0,\varepsilon/N)$,
\begin{eqnarray*}
|f_{N,2}(s)|&\le &\int_0^{\infty}\eta\left(\frac{\lambda}{N}\right)\:|E_1(\lambda, s)|\:{|{\bf c}(\lambda)|}^{-2} d\lambda\\
&=& N\int_1^2 \eta(\lambda)\:|E_1(\lambda N,s)|\:{|{\bf c}(\lambda N)|}^{-2}d\lambda\\
&\lesssim &N\|\eta\|_{\infty}\int_1^2s^2\:(\lambda N)^{n-1}d\lambda\\
&<& c'\varepsilon^2N^{n-2}\:.
\end{eqnarray*}
Therefore, using triangle inequality and (\ref{t1}), it follows that for all $s\in [0,\varepsilon/N)$ and $N$ sufficiently large,
\begin{eqnarray*}
|S_{\psi,0}f_N(s)|&\geq & |f_{N,1}(s)|-|f_{N,2}(s)|\\
&\geq & cN^n-c'\varepsilon^2N^{n-2}\geq c''N^n\:.
\end{eqnarray*}
This implies that for all $x \in B_{\varepsilon/N}$,
\begin{equation}  \label{sharpness_eq7}
S^*_{\psi}f_N(x) \ge c\:N^n\:,
\end{equation}
with the constant $c$ being independent of $N$. Now, if the estimate (\ref{estimates_on_balls}) holds, then by (\ref{sharpness_eq6}) and (\ref{sharpness_eq7}), it follows that
\begin{equation*}
{\left(\int_{0}^{\varepsilon/N} N^{np}\:s^{n-1}\:ds\right)}^{1/p} \lesssim N^{\beta +\frac{n}{2}}\:,
\end{equation*}
which implies that
\begin{equation*}
N^{n\left(1-\frac{1}{p}\right)} \lesssim N^{\beta +\frac{n}{2}}\:.
\end{equation*}
Then letting $N \to \infty$, it follows that
\begin{equation*}
n\left(1-\frac{1}{p}\right) \le \beta +\frac{n}{2}\:,
\end{equation*}
that is, $p \le 2n/(n-2\beta)$.

\medskip

{\bf Case 3}: {\em If $\beta=n/2$ then (\ref{estimates_on_balls}) does not hold for $p=\infty$.}

\medskip
\noindent
In this case we show the failure of the inequality
\begin{equation} \label{final_endpoint_eq1}
{\|S^*f\|}_{L^\infty(B)} \le C_B \: {\|f\|}_{H^{n/2}(S)}\:,
\end{equation}
for every ball centered at the identity $e$. We first note by the sufficient conditions of Theorem \ref{theorem} (and hence by Corollary \ref{cor}) that for all radial $f \in H^{n/2}(S)$, one has
\begin{equation} \label{final_endpoint_eq2}
\lim_{t\to 0}S_{\psi,t}f(x)=f(x)\:,\text{ for almost every } x \in S\:.
\end{equation} 
Now if we assume that the inequality (\ref{final_endpoint_eq1}) holds, combining it with (\ref{final_endpoint_eq2}), it would follow that
\begin{equation*}
{\|f\|}_{L^\infty(B)} = {\|\lim_{t\to 0}S_{\psi,t}f\|}_{L^\infty(B)} \le {\|S^*f\|}_{L^\infty(B)} \le C_B \: {\|f\|}_{H^{n/2}(S)}\:,
\end{equation*}
for every ball centered at the identity $e$. But this is not true \cite[Remark 5.1]{DR}. Hence we get the desired failure of (\ref{final_endpoint_eq1}). This completes the proof of Theorem \ref{theorem}. 

\begin{remark} \label{remark_on_sharpness_proofs}
It is curious to note that the oscillatory nature of the multiplier is crucially used in the proof of case 1, that is, to show that no maximal estimate (\ref{estimates_on_balls}) is possible if $\beta<1/4$. In the other two cases however, the crux of the arguments are weaved only in terms of the initial data and hence the multiplier plays no role there.
\end{remark}

\section{Concluding remarks}
In this section, we make some remarks and pose some new problems:
\subsection{Results for general dispersive equations:} Our proof of the sufficient conditions of Theorem \ref{theorem}, essentially works for the general class of  dispersive equations whose corresponding multipliers have phase functions $\psi$ satisfying the following asymptotic properties for some real numbers $\delta_1>0$ and $\delta_2>1$: 
\begin{equation*} 
\begin{cases}
	 |\psi'(\lambda)| \lesssim \lambda^{\delta_1 -1}\:,\text{  for } \lambda \in (0,1)  \:,\\
	 |\psi'(\lambda)| \lesssim \lambda^{\delta_2 -1}\:,\text{  for } \lambda \ge 1  \:,\\
	|\psi''(\lambda)| \asymp \lambda^{\delta_2 -2}\:,\text{  for } \lambda \ge 1  \:.
	\end{cases}
\end{equation*} 
Interestingly, in the proof of the necessity of the conditions of Theorem \ref{theorem} (along with the asymptotics of the first two derivatives), as clarified by the Transference principle (Theorem \ref{transference_principle}), it is the high frequency asymptotics of $\psi$ that turns out to be the heart of the matter. Thus dispensing of the explicit expressions of $\psi$, our arguments can be employed to obtain mapping properties (and consequently results on pointwise convergence) for general dispersive equations, even if one only has suitable asymptotic control on $\psi$ and its first and second derivatives.    

\subsection{Riemannian symmetric spaces of higher rank:} A special case of the class of Damek-Ricci spaces, is the class of rank one Riemannian symmetric spaces of non-compact type, that is, homogeneous spaces $G/K$, where $G$ is a connected, non-compact, semi-simple Lie Group with finite center and real rank one and $K$ is a maximal compact subgroup of $G$. In the case of rank one, the geometric notion of radiality coincides with the algebraic notion of $K$-biinvariance. It then becomes natural to pose the Carleson's problem for the Schr\"odinger equation (or more general dispersive equations) with $K$-biinvariant initial data on Riemannian symmetric spaces of non-compact type of arbitrary rank.

\section*{Acknowledgements} 
The author is supported by a research fellowship of Indian Statistical Institute.

\bibliographystyle{amsplain}

\end{document}